\def\marginpar#1{\ignorespaces}
\newtheorem{theorem}{Theorem}[section]
\newtheorem{proposition}[theorem]{Proposition}
\newtheorem{lemma}[theorem]{Lemma}
\newtheorem{definition}[theorem]{Definition}
\theoremstyle{definition}
\newtheorem{remark}[theorem]{Remark}
\def\be{\begin{eqnarray}}
\def\ee{\end{eqnarray}}
\def\ben{\begin{eqnarray*}}
\def\een{\end{eqnarray*}}
\numberwithin{equation}{section}
\def\AArm{\fam0 \rm}%
\newdimen\AAdi%
\newbox\AAbo%
\def\AAk#1#2{\setbox\AAbo=\hbox{#2}\AAdi=\wd\AAbo\kern#1\AAdi{}}%
\newcommand{\BBone}{{\ensuremath{{\AArm 1\AAk{-.8}{I}I}}}}
\def\eqref#1{(\ref{#1})}
\def\eqlabel#1{\def\@currentlabel{#1}}
\def\formula#1{\def\@tempa{#1}\let\@tempb\theequation\def\theequation{%
\hbox{#1}}\def\@currentlabel{(\theequation)}$$}
\def\endformula{\leqno\hbox{(\@tempa)}$$\@ignoretrue\let\theequation\@tempb}
\def\given{\hskip5\p@\relax\vrule\@width.4\p@\hskip5\p@\relax}
\newcommand{\open}[1]{%
\par\normalfont\topsep6\p@\@plus6\p@\trivlist\item[\hskip\labelsep\itshape#1%
\@addpunct{.}]\ignorespaces}
\DeclareRobustCommand{\close}[1]{%
  \ifmmode 
  \else \leavevmode\unskip\penalty9999 \hbox{}\nobreak\hfill
  \fi
  \quad\hbox{$#1$}}
\newlength{\toskip}\settowidth{\toskip}{(\theequation)}
\def \R {{\mathbb R}}
\def \P {{\mathbb P}}
\def \E {{\mathbb E}}
\def \N {{\mathbb N}}
\def \L {{\mathbb L}}
\def \W {{\mathbb W}}
\def \H {{\mathbb H}}
\def \phi {\varphi}
\newcommand     {\PP}{\mathbb{P}}
\begin{document}
\date{\today}

\title[Competitive and weak cooperative stochastic Lotka-Volterra  ...]{COMPETITIVE OR
WEAK COOPERATIVE STOCHASTIC LOTKA-VOLTERRA SYSTEMS CONDITIONED TO
NON-EXTINCTION}

\author[P. Cattiaux]{\textbf{\quad {Patrick} Cattiaux $^{\clubsuit}$}}

\address{{\bf {Patrick} CATTIAUX},\\ Universit\'e Paul Sabatier
Institut de Math\'ematiques. Laboratoire de Statistique et Probabilit\'es, UMR C 5583\\ 118 route
de Narbonne, F-31062 Toulouse cedex 09.} \email{cattiaux@math.univ-toulouse.fr}

 \author[S. M\'el\'eard]{\textbf{\quad {Sylvie} M\'el\'eard $^{\spadesuit}$}}
\address{{\bf {Sylvie} M\'EL\'EARD},\\ Ecole Polytechnique, CMAP, F- 91128 Palaiseau cedex,
.} \email{meleard@cmapx.polytechnique.fr}

\maketitle
 \begin{center}
 \textsc{$^{\spadesuit}$ Ecole Polytechnique} \quad \textsc{$^{\clubsuit}$ Universit\'e de Toulouse}

 \end{center}

\begin{abstract}
We are interested in the long time  behavior  of a two-type
density-dependent biological population  conditioned to
non-extinction, in both cases of competition or weak cooperation
between the two species. This population is described by a
stochastic Lotka-Volterra system, obtained as limit of
renormalized interacting birth and death processes. The weak
cooperation assumption allows the system  not to blow up. We study
the existence and uniqueness of a quasi-stationary distribution,
that is convergence to equilibrium conditioned to non extinction.
To this aim we generalize in two-dimensions spectral tools
developed for one-dimensional generalized Feller diffusion
processes. The existence proof of a quasi-stationary distribution
is reduced to the one for a $d$-dimensional Kolmogorov diffusion
process under a symmetry assumption. The symmetry we need is
satisfied under a local balance condition relying the ecological
rates. A novelty  is the outlined relation between the uniqueness
of the quasi-stationary distribution  and the ultracontractivity
of the killed semi-group.  By a  comparison between  the killing
rates for the populations of each type and the one of the global
population, we show that the quasi-stationary distribution can be
either supported by individuals of one (the strongest one) type or
 supported by individuals of the two types. We thus
highlight two different long time behaviors depending on the
parameters of the model: either the model exhibits an intermediary
time scale for which only one type (the dominant trait) is
surviving, or there is a positive probability to have coexistence
of the two species.
\end{abstract}

\bigskip

\textit{ Key words : Stochastic Lotka-Volterra systems, multitype
population dynamics, quasi-stationary distribution, Yaglom limit,
coexistence.}
\bigskip

\textit{ MSC 2000 : 92D25, 60J70, 37A60, 60J80.}
\bigskip

\section{\bf Introduction.}\label{Intro}

 Our aim in this paper is to study the long time behavior of a two
 dimensional
  stochastic Lotka-Volterra process  $Z=(Z^1_t,Z^2_t)_{t\geq 0}$, which  describes the size of
   a two-type
  density dependent population. It generalizes the one-dimensional
  logistic Feller diffusion process introduced in \cite{Eth},
  \cite{Lam05} and whose long time scales have been studied in details in \cite{Cat07}.

  \medskip
  \noindent
   More precisely, let us
consider the coefficients \be \label{coeffts} \gamma_1, \gamma_2
> 0\ ,\ r_1, r_2>0\ ;\ c_{11}, c_{22} >0\ ;\ c_{12}, c_{21} \in
\mathbb{R}.\ee The process $Z$, called Stochastic Lotka-Volterra
process (SLVP),  takes its values in $(\mathbb{R}_+)^2$ and is
solution of the following stochastic differential system: \be
\label{SLVP} dZ^1_t=\sqrt{\gamma_1Z^1_t}dB^1_t + (r_1 Z^1_t -
c_{11} (Z^1_t)^2
- c_{12} Z^1_t Z^2_t)\ dt,\nonumber\\
dZ^2_t=\sqrt{\gamma_2Z^2_t}dB^2_t + (r_2 Z^2_t - c_{21} Z^1_t
Z^2_t  - c_{22} (Z^2_t)^2)\ dt, \label{lotka-volterra}\ee where
$B^1$ and $B^2$ are independent standard Brownian motions
independent of the initial data $Z_0$. The extinction of the
population is modelled by the absorbing state $(0,0)$, and the
mono-type populations  by the absorbing sets $\mathbb{R}_+^*\times
\{0\}$ and $\{0\}\times \mathbb{R}_+^*$.
\bigskip

 \noindent This system \eqref{lotka-volterra} can be obtained as an approximation of
 a renormalized
 two-types birth and death
 process in case of large population and small ecological timescale.
  (The birth and death rates are at the same scale than the initial population size).
  This microscopic point of view has been developed in \cite{Cat07} concerning the
    logistic Feller equation  and can be easily generalized to multi-type models.
The coefficients $r_1$ and $r_2$ are the asymptotic growth rates
of $1$-type's and $2$-type's populations. The positive
coefficients $\gamma_1$ and $\ \gamma_2$  can be interpreted as
demographic parameters describing the ecological timescale. The
coefficients $c_{ij}, i,j =1,2$ represent the pressure felt by an
individual holding type $i$ from an individual with type $j$.  In
our case, the intra-specific interaction rates $c_{11}$ and
$c_{22}$ are assumed to be negative, modelling a logistic
intra-specific competition, while
 the inter-specific  interaction rates given by $c_{12}$ and
 $c_{21}$ can be positive or negative. In case where $c_{12}>0$,
 individuals of type $2$ have a negative influence on individuals
 of type $1$, while in case where $c_{12}<0$, they cooperate.
  Our main results proved in this paper
require two main assumptions. The first one is a symmetry
assumption between the coefficients $\gamma_i$ and $c_{ij}$,
$$c_{12} \gamma_2 =c_{21} \gamma_1,$$
 that we will
call ``balance condition'' (\eqref{balance}). It means that the global rates of influence of each
species on the other one are equal. In particular,  the coefficients $c_{12}$ and $c_{12}$ have
both the same sign.  The second main assumption (\eqref{det}) is required in the cooperative case
(when $c_{12}>0$ and $c_{21}>0$) and is given by $$c_{11}c_{22}-c_{12}c_{21}>0,$$ which compares
the intra-specific to the inter-specific interacting rates. This condition will be called the
``weak
  cooperative case''.

\noindent Because of the quadratic drift terms and of the
degeneracy of the diffusion terms near $0$, the SLVP can blow up
and its existence has to be carefully studied.  We prove the
existence of solutions to \eqref{SLVP}  in the competition case
and in the weak cooperative case. In the first case it results
from a comparison argument with independent one-dimensional
logistic Feller processes. In the general case, the existence of
the process $(Z_t)_t$ and a non blow-up condition are less easy to
prove. If \eqref{balance} holds, a change of variable leads us to
study a Kolmogorov process driven by a Brownian motion and the
existence of the process is proved using  a well chosen Lyapounov
funtion. That can be done
 if  \eqref{det} is satisfied and we  don't know if this condition is also necessary to avoid the
blow-up. Under conditions \eqref{balance} and \eqref{det} we will
also
  prove that $(0,0)$ is an absorbing point and that the process
  converges
almost surely to this point. That means that population goes to
extinction with probability one. We will show this property in two
steps. We will firstly show that the process is attracted by one
of the boundaries $\mathbb{R}_+\times \{0\}$ or $\{0\}\times
\mathbb{R}_+$. Once the process has attained one of them, it
behaves as the logistic Feller stochastic differential equation
and tends almost surely to $(0,0)$ in finite time. Therefore, our
main  interest in this paper is to study the long time behavior of
the process $(Z_t)_t$ conditioned to non-extinction, either in the
competition case, or in the weak cooperative case, under the
balance condition.

 Let us outline that the long time behavior of the SLVP considerably differs from
 the one of the
deterministic Lotka-Volterra system which corresponds to the case
where $\gamma_1=\gamma_2=0$. Indeed, a fine study shows that in
this case,  (cf. Istas \cite{Istas:05}), the point $(0,0)$ is an
unstable equilibrium and there are three possible   non trivial
strongly stable equilibria: either  the remaining population is
totally composed of individuals of type $1$ (the trait $1$ is
dominant), or a similar situation holds for trait $2$ or
co-existence of the two types occurs. In particular,  the
population cannot  goes to extinction.

In this paper, we want to describe  the asymptotic behavior of the
SLVP conditioned to non-extinction, thus generalizing the
one-dimensional case studied in \cite{Cat07}.  This question is of
great importance in Ecology. Although the limited competition
resources entail the extinction
 of the population, the extinction time can be large compared to
 human timescale and certain species may survive  for long periods
  teetering on the brink of extinction before dying out.   A natural
    biological question  is  which type will eventually survive
    conditionally to non-extinction.
 We will show that in
the long-time limit,  the two types do not always disappear at the
same time scale in the population and that a  transient mono-type
state can appear. More precisely, we will give some conditions on
parameters ensuring mono-type transient
   states (preserving a dominant trait in a longer time scale ) or
   coexistence of the two traits. The main tool of our study will
   be spectral theory and the conditions will be obtained by
   comparing the smallest eigenvalues of different killed
   operators. Indeed, conditioning to non-extinction, the process
   can either stay inside the positive quadrant (coexistence of
   traits) or attain one of the boundary (extinction of the other
   trait).
 Let us remark that in a work in progress
\cite{Champ08}, Champagnat and Diaconis are studying a similar
problem for two-types birth-and-death processes conditioned to
non-extinction.

The approach we develop is based on the mathematical notion of
quasi-stationarity (QSD) which has been extensively studied. (See
\cite{Pol} for a regularly updated extensive bibliography,
\cite{Ferri,SE04} for a description of the biological meaning,
\cite{FKMP,Goss,SVJ} for the Markov chain case and \cite{Cat07}
for the logistic Feller one-dimensional diffusion). In the latter,
the proofs are based on spectral theory, and the reference measure
is the natural symmetric measure for the killed process. We will
follow these basic ideas.

In our two-dimensional  SLVP  case, the existence of a symmetric
measure will be  equivalent  to the  balance condition and we are
led to study the Kolmogorov equation obtained by change of
variable.
 Before studying the conditioning to non-extinction, we will in a first step study the
  long time behavior
of the population
   conditioned to the  coexistence of the two types   (the process
  stays in the interior of the quadrangle $(\mathbb{R}_+^*)^2$ as soon as
   coexistence between the two types holds).
   Our theoretical results, generalizing what has be done in \cite{Cat07}
    to any dimension are stated in the  Appendix.
   The arguments implying the existence of a quasi-stationary distribution are mainly similar.
   The novelty will concern the uniqueness of  the quasi-stationary
distribution, which  is shown to be related to the
ultracontractivity of the killed process semi-group. We prove that
the Kolmogorov process associated with the SLVP satisfies this
setting and conclude to the existence and uniqueness of the QSD
for the Kolmogorov process conditioned to coexistence. To deduce a
similar result for the process conditioned to non-extinction we
need to carefully compare the  boundaries hitting times, and the
extinction time. That will give us our main theorem (Theorem
\ref{genial}) on the Kolmogorov system. Let us  summarize here
what does it means coming back to  the stochastic Lotka-Volterra
process.

\begin{theorem} \label{genialSLVP}
1) Under assumptions \eqref{balance} and \eqref{det}, the SLVP is
well defined on $\mathbb{R}_+$ and goes to extinction in finite
time with probability one.

2) The long-time behavior of its law conditioned to non-extinction depends on the starting point
$z$ and is given as follows.

\begin{itemize}
\item For all $z^1>0$, if $z=(z^1,0)$, then for all $A\subset \mathbb{R}_+^* \times \mathbb{R}_+$,
$$\mathbb{P}_{(z^1,0)}(Z_t\in A | T_0>0)= (m^1\otimes \delta_0)(A),$$
where $m^1$ is the unique QSD of the logistic Feller process $\
(Y^1_t)_t$ defined in \eqref{dom}.

\item For all $z^2>0$, if $z=(0,z^2)$, then for all $A\subset \mathbb{R}_+ \times \mathbb{R}_+^*$,
$$\mathbb{P}_{(0,z^2)}(Z_t\in A | T_0>0)= (\delta_0\otimes m^2)(A),$$ where $m^2$ is the unique
QSD of $\ (Y^2_t)_t$ defined in \eqref{dom2}.

 \item There is a unique
quasi-stationary distribution $m$ on
$(\mathbb{R}_+)^2\backslash\{(0,0)\}$,  such that for all
$z=(z^1,z^2)$ with $z^1>0$, $z^2>0$, for all $A\subset
(\mathbb{R}_+)^2\backslash\{(0,0)\}$,
$$\mathbb{P}_z(Z_t\in A | T_0>0)= m(A),$$ where $T_0$ is the
extinction time.

\item
If $\lambda_1$, (resp. $\lambda_{1,1}$, $\lambda_{1,2}$) denotes
  the positive killing rates of the global
 population, (resp. the population of type 1, of type 2), we get

\begin{itemize}
\item \textbf{Competition case}:
$\lambda_1>\lambda_{1,1}+\lambda_{1,2}$ and $m$ is given by \ben
m=\delta_0 \otimes m^2 + m^1\otimes \delta_0.\een Furthermore when
$\lambda_{1,2} > \lambda_{1,1}$ (resp. $<$), $m^1$ (resp. $m^2$)
is equal to $0$.

 \textbf{In other words, the model exhibits an
intermediary time scale  when only one type (the dominant trait)
is surviving.}

\medskip \item \textbf{Weak cooperation case}:
we have two different situations.
\begin{itemize}
\item If $\lambda_1>\lambda_{1,i}$ for $i=1$ or $i=2$, the
conclusion is the same as in the competition case. \item If
$\lambda_1<\lambda_{1,i}$ for $i=1$ and $i=2$, then \ben
m=\delta_0 \otimes m^2 + m^1\otimes \delta_0+ m_D,\een where $m_D$
is proportional to $\nu_1$.

\textbf{We thus have a positive probability to have coexistence of
the two species.}
\end{itemize}
\end{itemize}
\end{itemize}
\end{theorem}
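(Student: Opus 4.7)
The plan is to reduce everything to the companion result on the Kolmogorov system (Theorem \ref{genial}, stated in the appendix) via the change of variable mentioned in the introduction, so that the SLVP version follows from a careful transport of the spectral data.

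First I would dispatch assertion (1). Existence is treated in two regimes. In the competition case ($c_{12},c_{21}\geq 0$), a pathwise comparison of each component $Z^i$ with the one-dimensional logistic Feller diffusion of drift $r_iZ^i_t-c_{ii}(Z^i_t)^2$ (which is well-defined on $\mathbb{R}_+$ by \cite{Cat07}) gives non-explosion. In the weak cooperative case, I would apply the change of variable $X^i=2\sqrt{Z^i/\gamma_i}$ and check that, under the balance condition \eqref{balance}, the drift of the resulting system on $(\mathbb{R}_+^*)^2$ is a gradient $-\tfrac12\nabla V$ for some explicit potential $V$; condition \eqref{det} then yields a Lyapunov function (essentially $V$ itself plus a quadratic correction) preventing explosion. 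Absorption at $(0,0)$ in finite time follows in two steps: one first shows that the process is attracted to the union of the axes (e.g.\ via the Lyapunov function and Khasminskii-type arguments), and once it hits an axis it behaves like a one-dimensional logistic Feller diffusion, which is known from \cite{Cat07} to reach $0$ in finite time almost surely.

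For assertion (2) the main tool is the gradient representation above, which turns the killed SLVP on $(\mathbb{R}_+^*)^2$ into a $d$-dimensional symmetric Kolmogorov diffusion with reversible measure $\mu(dx)=e^{-V(x)}dx$. I would invoke Theorem \ref{genial}: existence of a QSD for the Kolmogorov process killed at the boundary of the positive quadrant is obtained by the standard spectral argument (Perron eigenfunction of the killed generator), and uniqueness comes from ultracontractivity of the killed semigroup, which the appendix establishes for this class of potentials. On each axis, the dynamics degenerates to the 1D logistic Feller diffusions $Y^1$ and $Y^2$ of \eqref{dom}, \eqref{dom2}, for which \cite{Cat07} provides the unique QSDs $m^1$ and $m^2$; this gives the first two bullets of (2) directly, the axes being stable under the dynamics.

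The heart of the argument, and in my view the main obstacle, is the mixing statement for general starting points $z$ with $z^1z^2>0$. Let $T_1$, $T_2$ denote the first hitting times of the axes $\{z^1=0\}$ and $\{z^2=0\}$, and $T_0$ the extinction time. A Markov decomposition at $T_1\wedge T_2$ gives, for any test set $A$,
\begin{equation*}
\PP_z(Z_t\in A\mid T_0>t)=\frac{\PP_z(Z_t\in A,\,t<T_1\wedge T_2)+\PP_z(Z_t\in A,\,T_1\wedge T_2\leq t<T_0)}{\PP_z(T_0>t)}.
\end{equation*}
Each of the three contributions (stay interior, cross to axis $1$, cross to axis $2$) decays exponentially with rate respectively $\lambda_1$ (the Perron eigenvalue of the Kolmogorov generator killed on the full boundary) and $\lambda_{1,1}$, $\lambda_{1,2}$ (the Perron eigenvalues of the killed generators on the two axes). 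Comparing these rates determines which contribution dominates: if $\lambda_1>\min(\lambda_{1,1},\lambda_{1,2})$ the interior mass is asymptotically negligible and $m$ collapses onto the axes, with the larger $\lambda_{1,i}$ killing the corresponding $m^i$; if instead $\lambda_1<\min(\lambda_{1,1},\lambda_{1,2})$ the interior piece survives and contributes the density $\nu_1$ coming from the Perron eigenfunction. The final task is to check that the competition assumption forces $\lambda_1>\lambda_{1,1}+\lambda_{1,2}$ (and hence in particular $\lambda_1>\max(\lambda_{1,1},\lambda_{1,2})$), which I would obtain by a variational comparison of the Dirichlet forms using that the interior Dirichlet form dominates the sum of the two one-dimensional boundary Dirichlet forms, while in the weak cooperative case no such comparison holds and both sub-cases are possible.
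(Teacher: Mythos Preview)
Your overall architecture matches the paper's: reduce to the Kolmogorov process $X$ via $X^i = 2\sqrt{Z^i/\gamma_i}$, establish existence and a.s.\ extinction (Section~2), prove existence and uniqueness of the QSD for $X$ killed at $\partial D$ via the spectral/ultracontractivity machinery of the appendix (Theorem~\ref{thmbranch}), and then analyse the decomposition \eqref{eqtzero} of $\P_x(T_0>t)$ by comparing $\lambda_1$ with $\lambda_{1,1},\lambda_{1,2}$ (Theorem~\ref{genial}). A few specifics differ. For non-explosion the paper's Lyapunov function is simply $\|X\|^2$, not $V$ plus a correction; and for a.s.\ hitting of $\partial D$ the paper does not use a Khasminskii argument but a pathwise comparison $X^i\leq U^i$ with the process $U$ of \eqref{eqfell3bis}, which is positive recurrent because $e^{-Q}\,dx$ is a finite invariant measure.

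There is one genuine gap. To make the decomposition at $T_{\partial D}$ work you need $\P_x(T_{\partial D}=T_i)>0$ for \emph{both} $i=1,2$ and $\P_x(T_1=T_2)=0$; otherwise one boundary contribution could vanish and the identification of the limit fails. The paper proves this in Proposition~\ref{propbord} by a Girsanov argument: the law of $X$ up to $T_{\partial D}$ is equivalent to that of the \emph{independent} pair $(H^1,H^2)$ of \eqref{eqH}, whose individual hitting times of $0$ have densities (again via Girsanov versus Brownian motion), so $(T_1,T_2)$ has a joint density on $(0,\infty)^2$. This step is absent from your outline and is not recoverable from the spectral picture alone.

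Your proposed derivation of $\lambda_1\geq\lambda_{1,1}+\lambda_{1,2}$ in the competition case via a ``variational comparison of Dirichlet forms'' is also different from the paper and not obviously workable: the two-dimensional and one-dimensional forms live on different spaces, and after the Schr\"odinger transform the potential difference contains the cross-term $2\nabla V_0\cdot\nabla W$ (with $W=\alpha(x^1)^2(x^2)^2$) of no definite sign. The paper instead uses a second pathwise comparison: in the competition case $X^i_t\leq H^i_t$, hence $T_{\partial D}(X)\leq T_0(H^1)\wedge T_0(H^2)$, and since the $H^i$ are independent the minimum has exponential rate $\lambda_{1,1}+\lambda_{1,2}$. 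Similarly, in the coexistence sub-case $\lambda_1<\min(\lambda_{1,1},\lambda_{1,2})$ the paper does more than compare rates: it replaces $\P_x$ by $\P_{\nu_1}$ on $\{T_{\partial D}>T\}$ (using that $\nu_1$ is the Yaglom limit), then exploits that under $\P_{\nu_1}$ the exit time is exactly exponential$(\lambda_1)$ and independent of the exit position, to compute the weights $c_j/(\lambda_{1,i}-\lambda_1)$ explicitly. Without this you get the support of $m$ but not the formula \eqref{qsdgenial}.
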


\medskip

Let us remark that our analysis is not  reduced to the
$2$-dimensional case, as it is made clear in the Appendix. All the
machinery is still available in any dimension. However the
explicit conditions on the coefficients are then more difficult to
write down. That is why we restrict ourselves to the
$2$-dimensional setting.

\section{Existence of the SLVP and boundary hitting times}

Let us denote $D=(\mathbb{R}_+^*)^2$. Let us remark that $\partial
D$, $ \mathbb{R}_+\times \{0\}$ and $\{0\}\times \mathbb{R}_+$ are
absorbing sets for the process $(Z_t)_t$, as also $\{(0,0)\}$. We
introduce
 \begin{itemize}
 \item $T_0$: the first hitting time  of $\{(0,0)\}$,
\item $T_1$: the first hitting time of $\mathbb{R}_+\times \{0\}$,
\item $T_2$: the first hitting time of $\{0\}\times \mathbb{R}_+$,
\item $T_{\partial D}$: the first hitting time of $\partial D$ (or
the exit time of $D$).
\end{itemize}

Of course, some of these stopping times are comparable. For
example
$$T_{\partial D}\leq T_1\leq  T_0\ ;\ T_{\partial D}\leq T_2\leq  T_0.
$$ On the other hand, $T_1$ and $T_2$ are not directly
comparable.

\bigskip

Let us prove the existence of the SLVP in some cases.
\begin{proposition} \label{exi-compet} If $c_{12}>0$ and $c_{21}>0$,
then there is no blow-up and the process $(Z_t)_t$ is well defined
on $\mathbb{R}_+$. In addition, for all $x\in (\mathbb{R}_+)^2$,
$$\mathbb{P}_x(T_0<+\infty)=1$$
and there exists $\lambda>0$ such that
$$\sup_{x\in (\mathbb{R}_+)^2}\mathbb{E}_x(e^{\lambda T_0})
<+\infty.$$
\end{proposition}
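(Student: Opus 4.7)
The key observation driving the proof is that in the competition regime $c_{12},c_{21}>0$, the cross interaction terms $-c_{12}Z^1_tZ^2_t$ and $-c_{21}Z^1_tZ^2_t$ are nonpositive on $(\mathbb{R}_+)^2$, so each coordinate of $Z$ has a drift pointwise dominated by the one of a one-dimensional logistic Feller diffusion. The plan is therefore to couple $Z^1$ and $Z^2$ with independent one-dimensional logistic Feller processes $Y^1,Y^2$ driven by the same Brownian motions, and to import from \cite{Cat07} (and \cite{Lam05}) the well-known facts that $Y^i$ is globally defined on $\mathbb{R}_+$ and is absorbed at $0$ in finite time with uniform-in-starting-point exponential moments.

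First I would obtain local existence and pathwise uniqueness of $(Z^1,Z^2)$ up to an explosion time $\zeta=\lim_n\tau_n$, with $\tau_n=\inf\{t:Z^1_t+Z^2_t\geq n\}$: this is standard, since on $(\mathbb{R}_+)^2$ the drift coefficients are locally Lipschitz and the diffusion coefficients are $1/2$-Hölder, so Yamada-Watanabe's theorem applies. Next I would introduce, for $i=1,2$,
$$dY^i_t=\sqrt{\gamma_i Y^i_t}\,dB^i_t+\bigl(r_iY^i_t-c_{ii}(Y^i_t)^2\bigr)\,dt,\qquad Y^i_0=Z^i_0,$$
driven by the same $B^i$ as $Z^i$. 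Since $c_{ij}\geq 0$ and $Z^j_t\geq 0$ before $\zeta$, the drift of $Z^i$ is pointwise dominated by that of $Y^i$, and both SDEs share the $1/2$-Hölder diffusion coefficient $\sqrt{\gamma_i\,\cdot\,}$, so the one-dimensional Yamada-Watanabe comparison theorem yields the pathwise bound
$$0\leq Z^i_t\leq Y^i_t\qquad\text{for every }t<\zeta,\ i=1,2.$$
Since $Y^1_t+Y^2_t$ is finite for all $t$ almost surely, this forces $\zeta=+\infty$ a.s., which settles the existence and no blow-up statement.

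For the almost sure extinction and exponential moments, I would set $\tau_i=\inf\{t\geq 0:Y^i_t=0\}$. From the one-dimensional analysis in \cite{Cat07} the logistic Feller diffusion comes down from infinity and hits zero in finite time with uniform exponential tails, i.e.\ there is $\lambda_0>0$ such that $\sup_{y\geq 0}\mathbb{E}_y\bigl(e^{\lambda_0\tau_i}\bigr)<+\infty$. The pathwise domination forces $Z^i_{\tau_i}=0$, and since $\{Z^i=0\}$ is absorbing for the $i$-th coordinate (the diffusion and drift of $Z^i$ both vanish there), $Z^i_t=0$ for all $t\geq\tau_i$. Hence $T_0\leq\max(\tau_1,\tau_2)\leq\tau_1+\tau_2$, whence
$$\mathbb{E}_x\bigl(e^{\lambda T_0}\bigr)\leq \mathbb{E}_{x^1}\bigl(e^{\lambda\tau_1}\bigr)\,\mathbb{E}_{x^2}\bigl(e^{\lambda\tau_2}\bigr)$$
by independence of $B^1$ and $B^2$, and choosing $\lambda=\lambda_0$ yields both $\mathbb{P}_x(T_0<+\infty)=1$ and the uniform exponential moment over $x\in(\mathbb{R}_+)^2$.

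The only genuine obstacle is justifying the pathwise comparison despite the non-Lipschitz square-root diffusion coefficient and the super-linear drift; this is exactly the setting the Yamada-Watanabe comparison theorem was designed for, and once the comparison is secured up to $\zeta$, everything else is a transfer of the one-dimensional facts already proved in \cite{Cat07}.
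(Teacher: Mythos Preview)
Your proof is correct and follows essentially the same approach as the paper: both dominate each coordinate $Z^i$ by the one-dimensional logistic Feller diffusion $Y^i$ via the comparison theorem of Ikeda--Watanabe (your ``Yamada--Watanabe comparison''), and then import non-explosion, almost-sure absorption at $0$, and the uniform exponential moments of the absorption time from \cite{Cat07}. Your write-up simply spells out in more detail the final step $T_0\leq \tau_1+\tau_2$ and the use of independence of $(\tau_1,\tau_2)$ that the paper leaves implicit.
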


\begin{proof}
In this competition case, the existence of the SLVP is easy to
show, by using a comparison argument (cf. Ikeda-Watanabe \cite{IW}
Chapter 6 Thm 1.1), and the population process $(Z_t)_t$ does not
blow up. Indeed, the coordinate $(Z^1_t)_t$, resp. $(Z^2_t)_t$)
can be upper-bounded by the solution of the logistic Feller
equation  \be dY^1_t=\sqrt{\gamma_1Y^1_t}dB^1_t + (r_1 Y^1_t -
c_{11} (Y^1_t)^2 )\ dt,\label{dom}\ee respectively \be
\label{dom2} dY^2_t=\sqrt{\gamma_2Y^2_t}dB^2_t + (r_2 Y^2_t  -
c_{22} (Y^2_t)^2)\ dt.\ee These one-dimensional processes have
been introduced in \cite{Eth, Lam05} and studied in details in
\cite{Cat07}. It's easy to deduce (by stochastic domination) that
the processes $Z^1$ and $Z^2$ become extinct in finite time.

 The a.s.
finiteness of each $T_i$, hence of $T_{\partial D}$, thus follows.
It has also been shown in \cite{Cat07} that the absorption times
from infinity have exponential moments.

\end{proof}

Let us now consider the general case. We  reduce the problem  by a
change of variables.

\medskip

Let us  define
$(X_t^1,X_t^2)=(2\sqrt{Z_t^1/\gamma_1},2\sqrt{Z_t^2/\gamma_2})$.
We obtain via It\^o's formula

\begin{eqnarray}\label{eqfell3}
dX^1_t & = & dB_t^1 \, + \, \left(\frac {r_1 X_t^1}{2} \, - \,
\frac{c_{11} \gamma_1 (X_t^1)^3}{8} \, -  \, \frac{c_{12} \gamma_2
\, X_t^1 \, (X_t^2)^2}{8} \, -
\frac{1}{2 X_t^1}\right) \, dt \\
dX^2_t & = & dB_t^2 \, + \, \left(\frac {r_2 X_t^2}{2} \, - \,
\frac{c_{22} \gamma_2 (X_t^2)^3}{8} \, -  \, \frac{c_{21} \gamma_1
\, X_t^2 \, (X_t^1)^2}{8} \, - \frac{1}{2 X_t^2}\right) \, dt
 \, . \nonumber
\end{eqnarray}

\bigskip
 In all the following, we will focus on the symmetric case where $X$ is a
Kolmogorov diffusion, that is a Brownian motion with a drift in
gradient form as \be \label{kolmo} dX_t = dB_t \, - \, \nabla V
(X_t) dt.\ee Indeed, all the results of spectral theory obtained
in the Appendix have been stated for such processes. Obvious
computation shows that it requires the following balance condition
on the coefficients:
\begin{equation}\label{balance}
 c_{12} \, \gamma_2 = c_{21} \,
\gamma_1.
\end{equation}
This relation is a symmetry  assumption
  between the global interaction rate
of type $2$ on type $1$ and of type $1$ on type $2$. (Recall that
the  coefficients $\gamma_i$ describe the ecological timescales).
If \eqref{balance} holds, the coefficients $c_{12}$ and $c_{21}$
have the same sign, allowing inter-species competition ($c_{12}$
and $c_{21} >0$) or inter-species cooperation ($c_{12}$ and
$c_{21} <0$).
\medskip

Under this condition,  the process  $X$ is called the stochastic
Lotka-Volterra Kolmogorov process (SLVKP). The potential $V$ is
then equal to
\begin{equation}\label{eqv}
V(x^1,x^2) = \frac 12 \, \sum_{i=1,2} \, \left(\log (x^i) +
\frac{c_{ii}\gamma_i (x^i)^4}{16} - \frac{r_i (x^i)^2}{2}\right)
\, + \, \alpha (x^1)^2  (x^2)^2,
\end{equation}
where \be \alpha =  \frac{c_{12} \, \gamma_2}{16} = \frac{c_{21}
\, \gamma_1}{16}. \ee

\bigskip

 Let us prove the existence of the SLVKP using the
$\mathbb{L}^2$-norm as a Lyapunov function, under a weak
cooperative assumption, that is  if \be \label{det} \alpha <0
\quad \hbox{ and } \quad c_{11}c_{22}-c_{12}c_{21}>0. \ee

\begin{theorem}
\label{exi} Assume  balance condition \eqref{balance}, weak
cooperative assumption \eqref{det}, then there is no blow-up and
the processes $(X_t)$, and then $(Z_t)$, are well defined on
$\mathbb{R}_+$.

 In addition, for all $x\in D$, \ben \P_x(T_{\partial
 D}<+\infty)=1,\een
for both $X$ and $Z$.
\end{theorem}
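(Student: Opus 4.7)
The plan is to work with the Kolmogorov form $(X_t)$ of \eqref{eqfell3} (whose coefficients are smooth on $D=(\mathbb{R}_+^*)^2$) and transfer the conclusions to $(Z_t)$ via the diffeomorphism $Z^i=\tfrac{\gamma_i}{4}(X^i)^2$. On $D$, standard SDE theory gives a unique strong solution up to $\tau=T_{\partial D}\wedge\tau_\infty$, where $\tau_\infty=\lim_n\inf\{t:|X_t|\geq n\}$. Two things remain to prove: (i) $\tau_\infty=+\infty$ almost surely (no blow-up), and (ii) $T_{\partial D}<+\infty$ almost surely.

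For (i), following the indication in the statement, I would take as Lyapunov function the squared $\mathbb{L}^2$-norm $\Phi(x)=(x^1)^2+(x^2)^2$. In the Itô formula applied to $\Phi(X_t)$ the two constant contributions — a $+2$ from the Hessian trace and a $-2$ from the two singular drifts $-\tfrac{1}{2X^i}$ — cancel exactly, leaving
$$\mathcal{L}\Phi(x)\;=\;r_1 u+r_2 v-\tfrac{1}{4}\,Q(u,v),\qquad u=(x^1)^2,\ v=(x^2)^2,$$
with $Q(u,v)=c_{11}\gamma_1 u^2+c_{22}\gamma_2 v^2+(c_{12}\gamma_2+c_{21}\gamma_1)uv$. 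Under the balance condition \eqref{balance} the cross coefficient becomes $2c_{12}\gamma_2$, and the discriminant of $Q$ equals $\gamma_1\gamma_2(c_{11}c_{22}-c_{12}c_{21})$, strictly positive by \eqref{det}. Hence $Q$ is positive definite on $\mathbb{R}^2$, giving $\mathcal{L}\Phi(x)\leq C_0-\kappa\,\Phi(x)^2\leq C_0$ for some $C_0,\kappa>0$. A standard Khasminskii argument — applying Itô to $\Phi(X_{t\wedge\tau\wedge n})$ and taking expectations — yields $\mathbb{E}_x[\Phi(X_{t\wedge\tau})]\leq\Phi(x)+C_0 t$, which rules out explosion to infinity and proves (i). The statement for $Z$ is immediate from the bijection.

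For (ii), the sharper bound $\mathcal{L}\Phi\leq C_0-\kappa\Phi^2$ with Jensen's inequality gives a time-uniform estimate $\sup_{t<T_{\partial D}}\mathbb{E}_x[\Phi(X_t)]\leq M$, so that on $\{T_{\partial D}=+\infty\}$ the process is tight in $D$ and returns infinitely often to a compact set $K\subset D$ bounded away from $\partial D$. To close the argument, one exploits the singular drift $-\tfrac{1}{2X^i}$ in \eqref{eqfell3}, which is absorbing at $\{X^i=0\}$: from $K$ a support-theorem / Girsanov-type argument yields a uniformly positive probability of entering any prescribed neighborhood of $\partial D$ in a bounded time interval, and the strong Markov property together with a Borel--Cantelli argument forces $T_{\partial D}<+\infty$ almost surely.

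The main obstacle is precisely this last step. Unlike the competition case treated in Proposition~\ref{exi-compet}, the cooperative cross-drift $-c_{ij}Z^iZ^j$ (with $c_{ij}<0$) has the wrong sign for the direct pointwise stochastic comparison by independent one-dimensional logistic Feller processes, so boundary hitting cannot be read off coordinate by coordinate. The proof must rely genuinely on the Kolmogorov/gradient structure — the interplay between the $\mathbb{L}^2$-Lyapunov tightness provided by $\Phi$ and the Bessel-type absorbing drifts $-\tfrac{1}{2X^i}$ inherited from the change of variables.
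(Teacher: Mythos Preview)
Your part (i) is correct and essentially identical to the paper's argument: both use $\Phi(x)=(x^1)^2+(x^2)^2$, observe that the singular and Laplacian contributions cancel, and reduce non-explosion to positivity of the quadratic form $Q(u,v)=c_{11}\gamma_1 u^2+c_{22}\gamma_2 v^2+2c_{12}\gamma_2\,uv$ under \eqref{det}.

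Part (ii) has a genuine gap, which you correctly flag but do not close. The Lyapunov function $\Phi$ controls only $|X|$, not $d(X,\partial D)$; the set $\{\Phi\leq R_0\}\cap D$ to which the drift inequality $\mathcal{L}\Phi\leq C_0-\kappa\Phi^2$ forces returns is \emph{not} compact in $D$ --- it contains points arbitrarily close to the axes. So the sentence ``returns infinitely often to a compact set $K\subset D$ bounded away from $\partial D$'' is not justified. One could in principle repair this by a two-region argument (returns to a bounded ball, then from the bounded ball either you are in a genuine compact $K\subset D$ or you are already in a thin strip near an axis, where a one-dimensional Bessel-type comparison using the bounded cross term finishes), but this is real work, not a remark.

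The paper takes a different and cleaner route. It introduces the auxiliary process $U=(U^1,U^2)$ solving the same SDE as $X$ but with the singular terms $-\tfrac{1}{2X^i}$ removed (equation \eqref{eqfell3bis}), driven by the \emph{same} Brownian motion. Since $W^i=X^i-U^i$ is $C^1$ on $\{t<T_{\partial D}\}$ with $\tfrac{d}{ds}W^i\big|_{W^i=0}=-\tfrac{1}{2X^i}<0$, one gets the pathwise comparison $X^i_t\leq U^i_t$ for $t<T_{\partial D}$, hence $T_{\partial D}\leq S_{\partial D}$ where $S_{\partial D}$ is the hitting time of $\partial D$ by $U$. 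But $U$ is a regular diffusion on all of $\R^2$ (no singularity) with bounded symmetric measure $e^{-Q(x)}dx$ (same $Q$ as in \eqref{Q}, integrable by \eqref{det}); it is therefore positive recurrent and crosses the axes in finite time almost surely. This bypasses entirely the need to control $d(X,\partial D)$ from inside $D$: the comparison transfers the question to a process for which $\partial D$ is not a boundary at all. What your approach would buy, if completed, is avoiding the introduction of $U$; what the paper's approach buys is that the recurrence and hitting arguments become standard facts about an ergodic diffusion on $\R^2$.
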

Hence, under the assumptions of Theorem \ref{exi}, Hypothesis (H1)
of Definition \ref{defhypo1} in Appendix is satisfied, what we
shall use later.
\begin{proof}
 Let us compute
\ben d\|X\|^2_t&=&
d(X^1_t)^2 + d(X^2_t)^2\\
&=& 2(X^1_t dB^1_t + X^2_t dB^2_t) +\sum_{i=1}^2 (X^i_t)^2
\left(r_i-{c_{ii}\gamma_i (X^i_t)^2\over 4} -{c_{ij}\gamma_j
(X^j_t)^2\over 4} \right)dt. \een The quartic function appearing
in the drift term is thus $ - q((x^1)^2,(x^2)^2)$, with
$$q(u,v)=c_{11}\gamma_1 (u)^2 + c_{22}\gamma_2 (v)^2+ 32 \alpha
uv.$$  Decomposing $$q(u,v)=c_{11}\gamma_1 \left(u+{16 \alpha\over
c_{11}\gamma_1} v\right)^2 +{v^2\over c_{11}\gamma_1}
(c_{11}c_{22}-c_{12}c_{21})\gamma_1\gamma_2 \, ,$$ and since
$\alpha < 0$, a necessary and sufficient condition for $q(u,v)$ to
be positive on the first quadrant ($u>0,v>0$), and to go to
infinity at infinity,  is thus
 $c_{11}c_{22}-c_{12}c_{21}
>0 \,$.
 Hence the drift term in the previous
S.D.E. is negative at infinity. It easily follows that
 \ben \sup_{t\in
\mathbb{R}_+} \mathbb{E}(\|X_t\|^2)<+\infty, \een ensuring that
the processes $(X_t)$, and then $(Z_t)$ are well defined on
$\mathbb{R}_+$.

\medskip \noindent
Let us now study the hitting time of the boundary $\partial D$. We
will compare $(X_t^1,X_t^2)$ with the solution of
\begin{eqnarray}\label{eqfell3bis}
dU^1_t & = & dB_t^1 \, + \, \left(\frac {r_1 U_t^1}{2} \, - \,
\frac{c_{11} \gamma_1 (U_t^1)^3}{8}
\, -  \, \frac{c_{12} \gamma_2 \, U_t^1 \, (U_t^2)^2}{8} \right) \, dt \\
dU^2_t & = & dB_t^2 \, + \, \left(\frac {r_2 U_t^2}{2} \, - \,
\frac{c_{22} \gamma_2 (U_t^2)^3}{8} \, -  \, \frac{c_{21} \gamma_1
\, U_t^2 \, (U_t^1)^2}{8} \right) \, dt
 \, . \nonumber
\end{eqnarray}
 Assuming \eqref{balance} and \eqref{det}, the diffusion process ($U_t^1,U_t^2)$ exists and is
unique in the strong sense, starting from any point.  We consider
now the solution built with the same Brownian motions as for $X$.

We shall see that, starting from the same $(x^1,x^2)$ in the first
quadrant,  and for all $t<T_{\partial D}$, $X_t^1\leq U_t^1$ and
$X_t^2 \leq U_t^2$.

To this end we can make the following elementary reasoning. Fix
$\omega$ and some $t<T_{\partial D}(\omega)$. Let us define $s
\rightarrow W_s^i=X_s^i - U_s^i$ for $i=1,2$ and for $s \leq t$.
Of course $W_0^i=0$. Due to the continuity of the paths, $s
\mapsto W_s^i$ is of $C^1$ class and $W^i$ solves an ordinary
differential equation such that $\frac{d}{ds}
(W_s^i)_{|_{s=0}}=-{1\over 2 x^i}<0$.

 Moreover we remark that if at some time $u\leq t$, $W^i_u=0$, then $\frac{d}{ds}
(W_s^i)_{|_{s=u}}=-{1\over 2 X^i_ u}<0$. It follows that $W_s^i
\leq 0$ for $0<s<t$, yielding the desired comparison result.

Denote $S_{\partial D}$ the hitting time  of $\partial D$ for the
process $U$. We thus have $S_{\partial D} \geq T_{\partial D}$. It
is thus enough to show that $S_{\partial D}$ is almost surely
finite. But, remark that $d\nu=e^{-Q(x)} dx$, with \be \label{Q}
Q(x^1,x^2)= \, \sum_{i=1,2} \, \left( \frac{c_{ii}\gamma_i
(x^i)^4}{16} - \frac{r_i (x^i)^2}{2}\right) \, + \, 2 \, \alpha
(x^1)^2 (x^2)^2,\ee
is an invariant (actually symmetric) bounded
measure for $U$. The process $U$  is thus positive recurrent and
it follows that, starting from any point in the first quadrant
$D$, $S_{\partial D}$ is a.s. finite.
\end{proof}

\bigskip
 These
comparison arguments allow us to obtain other interesting
properties of the process $X$, that we collect in the next
proposition

\begin{proposition}\label{propbord}
Under the assumptions of Theorem \ref{exi}, the following holds
\begin{itemize}
\item there exists $\lambda >0$ such that $\sup_{x \in D} \,
\E_x(e^{\lambda \, T_{\partial D}}) < +\infty$, \item for all
$x\in D$, $\P_x(T_{\partial D}=T_i)>0$ for $i=1,2$ (recall that
$T_i$ defined at the beginning of section 2 is the hitting time of
each half axis), and $P_x(T_{\partial D}=T_0)=0$ (recall that
$T_0$ is the hitting time of the origin).
\end{itemize}
\end{proposition}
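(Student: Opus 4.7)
For the first assertion, I would rely on the coupling $X_t^i\le U_t^i$ established in the proof of Theorem \ref{exi}, which yields pathwise $T_{\partial D}\le S_{\partial D}$, so it suffices to control $\sup_{x\in D}\mathbb E_x(e^{\lambda S_{\partial D}})$. The diffusion $U$ is symmetric with respect to the finite measure $d\nu=e^{-Q(x)}dx$ (the function $Q$ in \eqref{Q} is quartic-coercive thanks to \eqref{det}), so its killed generator on $D$ is self-adjoint in $L^2(\nu)$ with a positive first Dirichlet eigenvalue $\lambda_1^U>0$. This gives the pointwise exponential decay $\mathbb P_x(S_{\partial D}>t)\le C(x)e^{-\lambda_1^U t}$. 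To upgrade this to a uniform estimate, I would exploit the Lyapunov bound $\mathcal L_U\|x\|^2\le -c\|x\|^2+C$ already used in the existence proof: it forces $U$ to enter any prescribed compact $K\subset D$ with $\nu(K)>0$ in uniformly bounded expected time, after which interior elliptic regularity on $K$ together with the spectral decay produces the uniform geometric bound $\sup_{x\in D}\mathbb P_x(S_{\partial D}>t)\le C' e^{-\lambda t}$. Integration gives the exponential moment.

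For the second assertion, I would use an accessibility argument. Fix $i\in\{1,2\}$ and pick a smooth path $\gamma\colon[0,1]\to D$ joining $x$ to a point $y$ with $y^{3-i}$ of order $1$ and $y^i$ small. The coefficients of \eqref{eqfell3} are smooth on every compact subset of $D$ and the diffusion matrix is the identity, so the Stroock--Varadhan support theorem yields $\mathbb P_x\bigl(\sup_{t\le 1}\|X_t-\gamma(t)\|<\delta\bigr)>0$ for each $\delta>0$. Choosing $\delta$ small enough, at time $1$ the process lies in a small neighborhood of $y$ inside $D$ in which the singular drift $-\tfrac{1}{2X^i}$ dominates both the Brownian fluctuation and the polynomial coupling term involving $X^{3-i}$. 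A one-dimensional comparison over a short additional time window then forces $X^i$ to $0$ with positive conditional probability while $X^{3-i}$ remains bounded away from $0$, so $T_i<T_{3-i}$ on this event. The strong Markov property then yields $\mathbb P_x(T_{\partial D}=T_i)>0$.

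The third assertion is the delicate one. Since $T_{\partial D}\le T_0=T_1\vee T_2$ with equality precisely on $\{T_1=T_2\}$, it amounts to showing $\mathbb P_x(T_1=T_2)=0$. I would condition away one coordinate: on $\{T_1<T_2\}$ the event $\{T_1=T_2\}$ is empty, while on $\{T_2\le T_1\}$ it coincides with $\{X^1_{T_2}=0\}$. Conditioning on the part of the filtration generated by $B^2$ together with what is needed from $B^1$ up to $T_2$, one sees $X^1$ as a continuous semimartingale driven by the independent unit-intensity Brownian motion $B^1$, with drift smooth on $(0,\infty)$; its law at the stopping time $T_2$ should therefore be absolutely continuous on $(0,\infty)$ and in particular carry no atom at $0$. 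The main obstacle is to justify this rigorously in presence of the drift singularity of $X^1$ at $0$, since Girsanov cannot be applied globally. The natural fix is to localize, using the stopping times $\tau_\epsilon=\inf\{t:X^1_t<\epsilon\}$: establish the diffuse-law property on $\{T_2<\tau_\epsilon\}$ by a Girsanov change of measure on $\{X^1>\epsilon\}$, then let $\epsilon\downarrow 0$ and show that $\mathbb P_x(X^1_{T_2}\in(0,\epsilon),\,T_2\le T_1)\to 0$; this last step reduces to the classical polarity of single points for planar diffusions with non-degenerate second-order part. A symmetric argument covers $\{T_1\le T_2\}$ and concludes.
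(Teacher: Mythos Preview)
Your first item is essentially the paper's argument: both reduce to $U$ via the pathwise comparison $T_{\partial D}\le S_{\partial D}$, and both invoke a spectral/ultracontractivity property of $U$ to get the uniform exponential moment (the paper simply cites ultracontractivity of $U$ from Wang's book, while you sketch the Lyapunov-plus-spectral-gap mechanism behind it).

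For the second and third items, however, the paper takes a quite different and more economical route. Rather than treating accessibility and the corner separately, it introduces the \emph{decoupled} process $H=(H^1,H^2)$ obtained by deleting the cross terms $-\tfrac{c_{ij}\gamma_j}{8}X^i(X^j)^2$ from \eqref{eqfell3}; thus $H^1$ and $H^2$ are independent one-dimensional Kolmogorov diffusions. A Girsanov argument (the density involves only the smooth coupling potential $\alpha(x^1)^2(x^2)^2$, so $A(t\wedge T_{\partial D})$ is finite) shows that the law of $(X_{s\wedge T_{\partial D}})_{s\le t}$ is equivalent to that of $(H_{s\wedge T_{\partial D}})_{s\le t}$. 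A second Girsanov (against Wiener measure on each axis) shows that the law of each $T_0^{H^i}$ is equivalent to Lebesgue on $(0,\infty)$; by independence, the joint law of $(T_1,T_2)$ under $\mathcal P^H$ is equivalent to planar Lebesgue. Both conclusions then drop out at once: each half-space $\{t_1<t_2\}$, $\{t_2<t_1\}$ has positive measure (giving $\P_x(T_{\partial D}=T_i)>0$), and the diagonal $\{t_1=t_2\}$ has zero measure (giving $\P_x(T_{\partial D}=T_0)=0$).

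Compared with this, your Stroock--Varadhan/comparison argument for accessibility is plausible but heavier, and your third item has a genuine gap. The conditioning you propose is hard to make precise because the drifts couple the two coordinates, so $T_2$ (in your convention) is not measurable with respect to the filtration you condition on; and the ``polarity of single points'' invoked at the end concerns interior points of a diffusion with locally bounded drift, whereas here the drift is singular exactly at the boundary and the question is about the exit point. The paper's Girsanov-to-independent-pair trick sidesteps all of this: once the laws are equivalent, the corner question reduces to the trivial fact that two independent diffuse real random variables are a.s.\ unequal. (A minor point: your index convention for $T_i$ is reversed relative to the paper's---there $T_1$ is the hitting time of $\{x^2=0\}$, not of $\{x^1=0\}$.)
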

\begin{proof}
The first point is an immediate consequence of the same moment
controls for both $Y$ and $U$, thanks to the comparison property.
We already mentioned this property for $Y$ in Proposition
\ref{exi-compet}.

The same holds for $U$ since $U$ is known to be ultra-contractive
(see definition \ref{ultracontractivity} in Appendix B2) under the
hypotheses of theorem \ref{exi}. The ultracontractivity property
follows from the fact that the invariant measure $e^{-Q(x)} \, dx$
of the process $U$, ($Q$ defined in \eqref{Q}), satisfies the
conditions of Corollary 5.7.12 of \cite{Wbook}.

One sometimes says that $X$ satisfies the \textit{escape
condition} from $D$.

In order to show the second point we shall introduce another
process.  Namely define for $i=1,2$, $H^i$ as the solution of the
following stochastic differential equation
\begin{equation}\label{eqH}
dH^i_t  =  dB_t^i \, + \, \left(\frac {r_i H_t^i}{2} \, - \,
\frac{c_{ii} \gamma_i (H_t^i)^3}{8} \, -  \, \frac{1}{2
H_t^i}\right) \, dt \, .
\end{equation}
Of course $H^1$ and $H^2$ are independent processes, defined
respectively up to the hitting time of the origin. We decide to
stick $H^i$ in $0$ after it hits 0, as for $X^i$.

On the canonical space $\Omega_t=C([0,t],\bar{D})$ we denote by
$\mathcal P^X$ and $\mathcal P^H$ the laws of the processes
$(X_{s\wedge T_{\partial D}})_{s\leq t})$ and $(H_{s\wedge
T_{\partial D}})_{s\leq t})$ starting from the same initial point
$x$ in $D$. We claim that $\mathcal P^X$ and $\mathcal P^H$ are
equivalent. This is a consequence of an extended version of
Girsanov theory as shown  in \cite{Cat07} Proposition 2.2.  One
then have that for any bounded Borel function $F$ defined on
$\Omega_t$,
$$\E^X \left[F(\omega) \, \BBone_{t<T_{\partial D}(\omega)}\right] \, =
\, \E^H \left[ F(\omega) \, \BBone_{t<T_{\partial D}(\omega)} \,
e^{A(t)}\right]$$
 where
 \ben A(t)&=& \alpha \, (\omega_t^1)^2
(\omega_t^2)^2 - \alpha (x_1)^2 (x_2)^2  - \,  \alpha \, \int_0^t
\, \bigg( 2\alpha \, (\omega_s^1)^2 (\omega_s^2)^2 \,
((\omega_s^1)^2 + (\omega_s^2)^2) - ((\omega_s^1)^2 +
(\omega_s^2)^2)\\
&&- (r_1+r_2) \, (\omega_s^1)^2 (\omega_s^2)^2 + \frac 14
(c_{11}\gamma_1+ c_{22}\gamma_2) \, (\omega_s^1)^2 (\omega_s^2)^2
\, ((\omega_s^1)^2 + (\omega_s^2)^2) + ((\omega_s^1)^2 +
(\omega_s^2)^2)\bigg) ds \, ,\een and $\E^H$ (resp. $\E^X$)
denotes the expectation w.r.t. to $\mathcal P^H$ (resp. $\mathcal
P^X$). Remark that $A(t\wedge T_{\partial D})$ is well defined, so
that the previous relation remains true without the
$\BBone_{t<T_{\partial D}(\omega)}$ replacing $A(t)$ by $A(t\wedge
T_{\partial D})$, i.e. $$\E^X \left[F(\omega) \right] \, = \, \E^H
\left[ F(\omega) \, e^{A(t\wedge T_{\partial D})}\right] \, .$$
This shows the claimed equivalence.

It thus remains to prove the second part of the proposition for
the independent pair $(H^1,H^2)$. But as shown in \cite{Cat07}
Proposition 2.2 again, for each $i=1,2$, $$\E^{H^i}
\left[F(\omega) \, \BBone_{t<T_0(\omega)}\right] \, = \, \E^W
\left[ F(\omega) \, \BBone_{t<T_0(\omega)} \, e^{B(t)}\right]$$
for some almost surely finite $B(t)$, $\E^W$ being the expectation
with respect to the Wiener measure starting at $x^i$. It follows
that the $\mathcal P^{H^i}$ law of $T_0$ is equivalent to the
Lebesgue measure on $]0,+\infty[$, since the same holds for the
$\mathcal P^W$ law of $T_0$. The $\mathcal P^H$ law of $(T_1,T_2)$
is thus equivalent to the Lebesgue measure on $]0,+\infty[ \otimes
]0,+\infty[$ yielding the desired result for $H$ hence for $X$.
\end{proof}

\section{\bf  Existence and Uniqueness of the Quasi-Stationary Distribution for the Absorbing Set
 $\partial D$}

We can define a quasi-stationarity notion associated with each
absorbing set $O$, $\partial D$, $\mathbb{R}_+\times \{0\}$,
$\{0\} \times \mathbb{R}_+$. The results developed in Section 3
will refer to the absorbing set $\partial D$. Its complementary in
$(\mathbb{R}_+)^2$ is $D$, which is an open connected subset of
$\mathbb{R}^2$, conversely to the complementary of other absorbing
sets.

\bigskip
Let us recall what a quasi-stationary distribution  is.
 If $F$ denotes an absorbing set for the $(\mathbb{R}_+)^2$-valued process $Z$  and $T_F$ the hitting time of this set,
  a quasi-stationary
distribution (in short QSD) for $Z$ and for this absorption event
 is a probability measure $\nu$ satisfying
\begin{equation}
 \PP_{\nu}(Z_t\in A \mid T_F>t) = \nu(A),
\end{equation}
 for any
 Borel set $A\subseteq \mathbb{R_+}^2\setminus \{F\}$ and $t\geq 0$.
A specific quasi-stationary distribution is defined, if it exists,
as the limiting law, as $t\to \infty$, of $Z_t$ conditioned on
$T_F>t$, when starting from a fixed population. That is, if  for a
all $x\in \mathbb{R_+}^2\setminus \{F\}$, the limit
$$
\lim_{t\to \infty}\PP_x(Z_t\in A \mid T_F>t)
$$
exists and is independant of $x$, and  defines a probability
distribution on $\mathbb{R_+}^2\setminus \{F\}$, then it is a QSD
called quasi-limiting distribution, or (as we will do here) Yaglom
limit.

It is thus well known, (see \cite{FKMP}), that there exists
$\lambda_F>0$ such that
$$
\mathbb{P}_{\nu}(T_F>t) = e^{-\lambda_F t}.$$ This killing rate
$\lambda_F$ gives the velocity at which the process issued from
the $\nu$-distribution get to be absorbed.

\bigskip

Our aim is now to study the asymptotic behavior of the law of
 $X_t$ conditioned on not reaching the boundary.  All the material
 we need will be developed in Appendix. The later essentially extends to
 higher dimension the corpus of tools introduced in \cite{Cat07}.
  The spectral theory is developed for any Kolmogorov diffusion in
$\mathbb{R}^d$, and for any dimension $d$. Then the existence of a
quasi-stationary distribution is obtained. The novelty is  the
uniqueness result, since it is deduced from the ultracontractivity
of the semigroup.

\medskip
 We will extensively refer to this Appendix,
to study the problem of existence of a quasi-stationary
distribution for the SLVKP, with the potential $V$ defined in
\eqref{eqv}.

We introduce the reference measure, given by
$$\mu(dx^1,dx^2) = e^{-2V(x)} dx = \frac{1}{x^1 \, x^2} \, e^{- Q(x^1,x^2)} \, dx^1 \, dx^2$$
where $Q$ is the symmetric polynomial of degree 4 given in
\eqref{Q}.

It is the natural measure to deal with, since it  makes the
transition semi-group symmetric in $\mathbb{L}^2(\mu)$. One
problem we have to face is that the measure $\mu$ has an infinite
mass due to the behavior of its density near the axes. Remark that
the density is integrable far from the axes, which is equivalent
to
 \be \label{int} \int_D e^{-Q(x)} dx <+\infty. \ee
This property is  immediate in  the competition case ($c_{ii}$,
$\gamma_i$ and $\alpha $ are positive) and has already been shown
in the proof of Theorem \ref{exi}, when \eqref{det} holds.

\medskip

 \begin{proposition}
 \label{below} Assume \eqref{balance} and \eqref{det}. Then, there
  exists some $C>0$ such that for all
 $x\in D$,
 \be \label{fund}  |\nabla V|^2(x)-\triangle V(x) \geq - C.\ee
 We deduce that the
 semigroup  $P_t$ of the SLVKP  killed at time $T_{\partial D}$,
has a density with respect to $\mu$,  belonging to
$\mathbb{L}^2(d\mu)$.
\end{proposition}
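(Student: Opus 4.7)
The plan is to compute $W(x) := |\nabla V|^2(x) - \triangle V(x)$ explicitly, uncover a cancellation, and then use \eqref{det} to control the remaining polynomial piece. Set $A = c_{11}\gamma_1$, $B = c_{22}\gamma_2$, $\beta = 16\alpha$, and write $\partial_i V = \frac{1}{2 x^i} + g_i$, where $g_i$ is a polynomial of degree~$3$; concretely,
$$g_1 = \frac{A (x^1)^3}{8} - \frac{r_1 x^1}{2} + 2\alpha x^1 (x^2)^2, \qquad g_2 = \frac{B (x^2)^3}{8} - \frac{r_2 x^2}{2} + 2\alpha x^2 (x^1)^2.$$
One then has
$$(\partial_i V)^2 - \partial_i^2 V = \frac{3}{4(x^i)^2} + \left(\frac{g_i}{x^i} - \partial_i g_i\right) + g_i^2.$$
A direct check shows the essential cancellation $\frac{g_i}{x^i} - \partial_i g_i = -\frac{c_{ii}\gamma_i(x^i)^2}{4}$ (the $r_i$ and $\alpha$ contributions are identical in both $\frac{g_i}{x^i}$ and $\partial_i g_i$, while the cubic monomial produces $\frac{A(x^i)^2}{8} - \frac{3A(x^i)^2}{8}$), yielding
$$W(x) = \sum_{i=1,2}\left[\frac{3}{4(x^i)^2} - \frac{c_{ii}\gamma_i (x^i)^2}{4}\right] + g_1^2(x) + g_2^2(x).$$

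To bound $W$ from below, I would observe that $W$ is continuous on every compact subset of $D$, hence bounded there, and that near the axes the term $\frac{3}{4(x^i)^2}$ dominates and sends $W$ to $+\infty$. Only the behavior as $|x|\to\infty$ remains, and there the degree-$6$ part of $g_1^2 + g_2^2$, written in $u = (x^1)^2$, $v = (x^2)^2$, turns out to factor as
$$P_6(u,v) = \frac{1}{64}\left[u\,(A u + \beta v)^2 + v\,(\beta u + B v)^2\right],$$
which is manifestly non-negative on $\{u,v \geq 0\}$. Moreover $P_6$ vanishes only at the origin, because the homogeneous linear system $A u + \beta v = 0$, $\beta u + B v = 0$ has only the trivial solution precisely when its determinant $AB - \beta^2 = (c_{11}c_{22} - c_{12}c_{21})\gamma_1 \gamma_2$ is non-zero---and that is exactly assumption \eqref{det}. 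A homogeneity/compactness argument on the simplex $\{u+v=1\}$ then gives $P_6(u,v) \geq \kappa(u^3 + v^3)$ for some $\kappa > 0$; the lower-order degree-two terms $-\frac{c_{ii}\gamma_i(x^i)^2}{4}$ are absorbed, and \eqref{fund} follows.

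For the second assertion, the plan is to use the ground-state transformation $U f = e^{-V} f$, a unitary isomorphism $\mathbb{L}^2(\mu) \to \mathbb{L}^2(dx)$ conjugating the generator $L = \frac{1}{2}\triangle - \nabla V\cdot\nabla$ of the SLVKP into the Schr\"odinger operator $H = \frac{1}{2}\triangle - \frac{1}{2}W$ on $\mathbb{L}^2(dx)$ with Dirichlet boundary condition on $\partial D$. By Feynman--Kac,
$$e^{tH}f(x) = \mathbb{E}_x\bigl[f(B_t)\,\BBone_{t < T^B_{\partial D}}\,\exp\bigl(-\tfrac{1}{2}\int_0^t W(B_s)\,ds\bigr)\bigr],$$
and thanks to $W \geq -C$ this is dominated by $e^{Ct/2}$ times the $2$-dimensional Dirichlet heat semigroup on $D$, whose kernel is bounded by the Gaussian $(2\pi t)^{-1}e^{-|x-y|^2/(2t)}$. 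Since this Gaussian lies in $\mathbb{L}^2(dy)$, $e^{tH}$ admits a kernel $K_t(x,\cdot)\in \mathbb{L}^2(dx)$. Undoing the transformation, the density of $P_t$ with respect to $\mu$ is $p_t(x,y) = e^{V(x)+V(y)} K_t(x,y)$, and $\int p_t(x,y)^2\,\mu(dy) = e^{2V(x)}\int K_t(x,y)^2\,dy < +\infty$, which is the claimed $\mathbb{L}^2(\mu)$ bound. The main obstacle is the algebraic cancellation in the first step, which is exactly what forces the determinant condition \eqref{det} to emerge as the natural hypothesis; the second part is a relatively routine application of Feynman--Kac plus the ground-state transformation, already treated in the general Kolmogorov setting of the Appendix.
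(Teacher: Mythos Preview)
Your proof is correct and follows essentially the same route as the paper's: both compute $|\nabla V|^2-\Delta V$ explicitly, isolate the singular contributions $\tfrac{3}{4(x^i)^2}$ near the axes and the degree-six polynomial piece at infinity, and then reduce positivity of the latter on the first quadrant to the non-degeneracy of the linear system governed by $c_{11}c_{22}-c_{12}c_{21}$. Your decomposition $\partial_i V=\tfrac{1}{2x^i}+g_i$ makes the cancellation $\tfrac{g_i}{x^i}-\partial_i g_i=-\tfrac{c_{ii}\gamma_i(x^i)^2}{4}$ more transparent than the paper's raw expansion, and your Feynman--Kac sketch for the $\mathbb{L}^2$ density is exactly what the paper invokes by citing Theorem~\ref{thmstructure}.
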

\begin{proof}
 Under \eqref{balance} and \eqref{det}, we have seen that the explosion time $\xi$ is
 infinite and thus \eqref{eqh1} is satisfied. Then the conclusion of Theorem \ref{thmstructure}
 holds:
 one
proves by Girsanov's theorem that  the semigroup $P_t$ of the
SLVKP has a density with respect to $\mu$.

 Moreover,
computation gives  \ben &&|\nabla V|^2(x)-\triangle V(x)=\\
&&\bigg(-{1\over 2x^1} + r_1{x^1\over 2} + c_{11}
{\gamma_1(x^1)^3\over 8} +2 \alpha x^1(x^2)^2\bigg)^2 +
\bigg(-{1\over 2x^2} + r_2{x^2\over 2} + c_{22}
{\gamma_2(x^2)^3\over 8} +2 \alpha (x^1)^2x^2\bigg)^2\\
&&+ {1\over 2(x^1)^2} + {r_1\over 2} - c_{11} {3
\gamma_1(x^1)^2\over 8} -2 \alpha  (x^2)^2 + {1\over 2(x^2)^2} +
{r_2\over 2} - c_{22} {3 \gamma_2(x^2)^2\over 8} -2 \alpha
(x^1)^2. \een

Hence, we observe that the terms in $|\nabla V|^2-\triangle V$
 playing a role  near infinity are equal to ${3\over 4}({1\over
x_1^2}+{1\over x_2^2})$ (when one of the coordinates is close to
zero) and to \ben &&\left({c_{11}\gamma_1\over 8} (x^1)^3 +
2\alpha x_1 (x^2)^2\right)^2 +\left({c_{22}\gamma_2\over 8} x_2^3
+ 2\alpha x_2 (x^1)^2\right)^2 \\&& =
(x^1)^2\left(-{c_{11}\gamma_1\over 8} (x^1)^2 + 2\alpha
(x^2)^2\right)^2 + (x^2)^2\left(-{c_{22}\gamma_2\over 8} (x^2)^2 +
2\alpha (x^1)^2\right)^2.\een The two terms in factor of $(x^1)^2$
and $(x^2)^2$ in the first quantity will not be  together equal to
$0$ as soon as $\alpha>0$ or as the determinant of the system \ben
\left\{\begin{array}{ccc}
{c_{11}\gamma_1\over 8} Y_1 + 2\alpha Y_2 &=& 0\\
2\alpha Y_1  + {c_{22}\gamma_2\over 8} Y_2 &=&0.\end{array}\right.
\een is non zero, what is satisfied under the condition
\eqref{det}. It follows that $|\nabla V|^2(x)-\triangle V(x)$
tends to $+\infty$ as $|x|$ tends to infinity. Since $|\nabla
V|^2-\triangle V$ is a smooth function in $D$,  \eqref{fund}
follows.  Then, we deduce from Theorem \ref{thmstructure} that for
each $t>0$, the density of $P_t$ belongs to $\mathbb{L}^2(d\mu)$.
\end{proof}

Let us now state our first main result.

\begin{theorem}\label{thmbranch}
Under Assumptions \eqref{balance} and \eqref{det}, that is if
\begin{itemize}
\item $c_{12} \, \gamma_2 = c_{21} \, \gamma_1$, \item if $\alpha<0$, $ c_{11} c_{22} \, - \,
c_{12} c_{21}
> 0$,
\end{itemize}
 there exists a unique
quasi-stationary distribution $\nu_1$ for the stochastic
Lotka-Volterra Kolmogorov process $X$, which is the quasi-limiting
distribution starting from any initial distribution.

In particular, there exists $\lambda_1>0$ such that for all $x\in
D$, for all $A\subset D$,  \be \label{lim-yag} \lim_{t\to \infty}
e^{\lambda_1 t} \mathbb{P}_x(X_t\in A| T_{\partial D}  >t) =
\nu_1(A). \ee
\end{theorem}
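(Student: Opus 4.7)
The plan is to apply to the SLVKP the general spectral framework developed in the Appendix. Three ingredients are required, and each of them has already been established: (i) non-explosion of $X$ on $[0,+\infty)$, given by Theorem \ref{exi}, so that Hypothesis (H1) of Definition \ref{defhypo1} holds; (ii) the \emph{escape condition}, i.e.\ $\mathbb{P}_x(T_{\partial D} < +\infty) = 1$ together with uniform exponential moments for $T_{\partial D}$, provided by Proposition \ref{propbord}; and (iii) the fact that the killed semigroup $P_t$ admits a density with respect to $\mu$ belonging to $\mathbb{L}^2(\mu)$, which follows from Proposition \ref{below} via Theorem \ref{thmstructure}.

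With these in hand, the killed semigroup $P_t$ is self-adjoint and Hilbert--Schmidt on $\mathbb{L}^2(\mu)$, so its spectrum is discrete, consisting of eigenvalues $0 < \lambda_1 \leq \lambda_2 \leq \cdots$ accumulating at $+\infty$. A Perron--Frobenius argument applied to the positivity-preserving semigroup $P_t$ shows that $\lambda_1$ is attained by a strictly positive eigenfunction $\eta_1$ on $D$; normalizing so that $\int_D \eta_1\, d\mu = 1$, one obtains a candidate quasi-stationary distribution
\[
\nu_1(dy) \;=\; \eta_1(y)\,\mu(dy).
\]

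The remaining task, both for uniqueness and for the exponential Yaglom rate, is to establish \emph{ultracontractivity} of $P_t$. Here Proposition \ref{below} plays the central role: the bound $|\nabla V|^2 - \Delta V \geq -C$, together with the divergence of this quantity both at infinity and near $\partial D$ (the latter coming from the $\tfrac12 \log x^i$ terms in $V$, which produce $1/(2(x^i)^2)$ singularities), allows one to apply a logarithmic Sobolev inequality in the spirit of Corollary~5.7.12 of \cite{Wbook}, exactly as was already used for the comparison process $U$ in the proof of Proposition \ref{propbord}. One may work either with $\mu$ restricted to sublevel sets of $V$, or, after the ground-state transform, with the probability measure $\eta_1^2\,\mu$. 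Ultracontractivity then forces $\lambda_1$ to be simple and isolated, yielding a positive spectral gap $\lambda_2 - \lambda_1 > 0$.

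From the spectral decomposition one obtains, uniformly on compact subsets of $D$,
\[
e^{\lambda_1 t}\,P_t \mathbf{1}_A(x) \;\longrightarrow\; \eta_1(x)\,\nu_1(A) \qquad \text{for every Borel } A \subset D,
\]
and taking the ratio with the special case $A = D$ gives \eqref{lim-yag}, independent of the starting point $x$. This $x$-independence automatically forces uniqueness, since any QSD must coincide with its own Yaglom limit. The main obstacle in the argument is the ultracontractivity step: one has to verify that the confinement of $|\nabla V|^2 - \Delta V$ near the killing boundary $\partial D$ is strong enough, and that the Dirichlet conditions at $\partial D$ are compatible with the log-Sobolev machinery; this is precisely the situation for which the general $d$-dimensional Appendix statements are designed, and it is where the weak-cooperation hypothesis \eqref{det} (which secured both integrability of $e^{-Q}$ on $D$ and the growth of $|\nabla V|^2 - \Delta V$ at infinity) enters in an essential way.
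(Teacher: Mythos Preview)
Your overall architecture is right---verify (H), extract a discrete spectrum and positive ground state via Theorem \ref{thmspectre}, show $\eta_1 \in \mathbb{L}^1(\mu)$ to obtain the Yaglom limit via Theorem \ref{thmyaglom}, then establish ultracontractivity to get uniqueness via Proposition \ref{propultraunique}. But two steps are not actually carried out.

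First, you write ``normalizing so that $\int_D \eta_1\,d\mu = 1$'' without justification. Since $\mu$ has infinite mass (its density $(x^1 x^2)^{-1} e^{-Q}$ blows up along the axes), $\eta_1 \in \mathbb{L}^2(\mu)$ does not by itself give $\eta_1 \in \mathbb{L}^1(\mu)$. The paper handles this via Proposition \ref{schroding}: the Schr\"odinger ground state $\psi_1 = \eta_1 e^{-V}$ is bounded, whence $\int_D \eta_1\,d\mu \le \|\eta_1 e^{-V}\|_\infty \int_D e^{-V}\,dx$, and the last integral is $\int_D (x^1 x^2)^{-1/2} e^{-Q/2}\,dx < \infty$.

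Second, and more seriously, your ultracontractivity argument does not go through. You invoke Corollary~5.7.12 of \cite{Wbook} ``exactly as was already used for the comparison process $U$'', but that result concerns diffusions with a \emph{finite} invariant measure. It applies to $U$ precisely because $e^{-Q}\,dx$ is bounded; it does \emph{not} apply to the killed process $X$, whose symmetric measure $\mu$ is infinite. Passing to the ground-state measure $\eta_1^2\,\mu$ does not repair this: that would amount to \emph{intrinsic} ultracontractivity, a strictly stronger property requiring its own proof. The paper instead applies Proposition \ref{propkkr} (the KKR ``well method''), which is designed for exactly this setting: one checks $\bar V(R)=\sup_{|x|\le R} V(x)\le c' R^4$ and $\bar G(R)=\inf_{|x|\ge R} G(x)\ge c R^6$, so that condition \eqref{ultra} holds with $\gamma_k=k^{-3/2}$. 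This is where the quantitative growth of $|\nabla V|^2-\Delta V$---not merely its lower boundedness---is used; you allude to ``growth at infinity'' but never produce the $R^6$ rate, which is what (H3) and the well method require.
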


\begin{proof} The proof is deduced from Appendices A and B.
 Standard results on Dirichlet forms (cf. Fukushima \cite{Fuku}) allow us to build a
self-adjoint semigroup on $\mathbb{L}^2(\mu)$, which coincides with $P_t$ for bounded functions
belonging to $\mathbb{L}^2(\mu)$. Its generator  $L$ is non-positive and self-adjoint on
$\mathbb{L}^2(d\mu)$,  with $D(L)\supseteq C_0^{\infty}(D)$. Its restriction to $C_0^{\infty}(D)$
is equal to $$Lg= {1\over 2}\Delta g - V.\nabla g\ ,\quad g\in C_0^{\infty}(D).$$

We now develop a spectral theory for this semigroup (also called
$P_t$), in $\mathbb{L}^2(d\mu)$.

We  check that the hypotheses (H) introduced in Definition
\ref{defhypo1} required to apply  Theorem \ref{thmspectre} are
satisfied under the assumptions \eqref{balance} and \eqref{det}.
Indeed (H4) is obviously satisfied and (H1) and (H2) are deduced
from Proposition \ref{below}. Furthermore, using for example polar
coordinates, one easily shows that  Condition \eqref{det} implies
that
 $$\bar{G}(R)=
\inf\{|\nabla V|^2(x)-\triangle V(x); |x| \geq R \hbox{ and } x\in
D\} \geq c R^6$$ for positive constant $c$. Thus (H3) holds.

 Since Hypotheses $(H)$ and $(H1)$ are satisfied,  Theorem \ref{thmspectre} implies that
 $-L$ has
a purely discrete spectrum of non-negative eigenvalues and  the
smallest one $\lambda_1$ is positive. Let us  prove that the
associated eigenfunction $\eta_1$ belongs to $L^1(d\mu)$. We have
 shown in Section B.2 that $\eta_1 e^{-V}$ is bounded. Thus
\ben \int_D  \eta_1(x) d\mu(x)= \int_D \eta_1(x) e^{-2V(x)}
 dx
\leq \|\eta_1 e^{-V}\|_{\infty} \int_D e^{-V(x)} dx <+\infty,\een
since $e^{-V(x)}= {1\over\sqrt{x^1 x^2}} e^{-{1\over 2}Q(x^1,
x^2)}$.

Thus the eigenfunction $\eta_1$ belongs to $\mathbb{L}^1(d\mu)$
and therefore, as proved in Theorem \ref{thmyaglom}, the
probability measure $\nu_1={\eta_1 d\mu\over \int_D \eta_1 d\mu}$
is the Yaglom limit distribution.

In order to show the uniqueness of the quasi-stationary
distribution, we apply Proposition \ref{propultraunique} relating
this uniqueness property to the ultracontractivity of
 the semi-group $P_t$. Let us
show that  the sufficient conditions ensuring ultracontractivity
stated in Proposition \ref{propkkr} are satisfied by the SLVKP.

The function $V$ is bounded from below in $D_\varepsilon=\{y\in D;
d(y,\partial D)>\varepsilon\}$. In addition, Condition \eqref{det}
implies that $\bar{V}(R)=\sup\{V(x), x\in FD, |x|\leq R\}\leq c'
R^4$ and $\bar{G}(R)\geq c R^6$, for positive constants $c'$ and
$c$. Hence Condition \eqref{ultra} in Proposition \ref{propkkr} is
satisfied with $\gamma_k = k^{-3/2}$. Thus the killed semi-group
$P_t$ of the $SLVKP$ is ultracontractive and then the uniqueness
of the quasi-stationary distribution holds (cf.  Proposition
\ref{propultraunique}).

Hence existence and uniqueness of the quasi-stationary
distribution  holds for $X$.\end{proof}

\begin{remark}
\emph{ Since the laws of $X_t$ and $Z_t$ are related via an
elementary change of variables formula, a similar result will be
true for the stochastic Lotka-Volterra process $Z$.}
\end{remark}

\section{\bf Explicit Quasi-stationary Equilibria - Mono-type
transient states.}

In the above Section we were concerned by  conditioning to
coexistence's event. Let us now come back to our initial question,
that is the long time behavior of the process conditioned to
non-extinction.  The SLVKP dynamics is particular in the sense
that once hitting the boundary $\partial D$, the process will no
more leave it. Hence, for $t\geq T_{\partial D}$ the process will
stay on one half axis, and the dynamics on this axis is given by
the process $(H^i_t)_t$ defined in \eqref{eqH}, that has been
extensively studied in \cite{Cat07}. Thus we  know from
\cite{Cat07} that for $i=1,2$, there is a positive killing rate
$\lambda_{1,i}>0$ and a unique quasi-stationary measure
$\nu_{1,i}$ on the axis $x^j=0$ characterized by the ground state
$\eta_{1,i}$ (eigenfunction related to $\lambda_{1,i}$), which is
a positive function, bounded and square integrable with respect to
the corresponding symmetric measure $\mu^i$ on each axis. More
precisely, we have
$$\nu_{1,i}(dx^i)= \, \eta_{1,i}(x^i) \, \mu^i(dx^i) \, ,$$
with
$$\mu_i(dx^i)=e^{- 2 \int_1^{x^i} q_i(u)du} dx^i \quad \hbox{ and } \quad q_i(u)=
{1\over 2u}-{r_i u\over 2} + { c_{ii} \gamma_i\over 8}.
$$
In addition,  $\forall A \subset \mathbb{R}_+^*$, \be
\label{qsd-unidim} e^{\lambda_{1,i}t} \lim_{t\to \infty}
\mathbb{P}_{x^i} (X^i_t\in A| T_i
>t) = \nu_{1,i}(A),
\ee We deduce from this study that for all $x^1>0$, for all
$A\subset \mathbb{R}_+^*\times \mathbb{R}_+$, (resp. $x^2>0$ and
$A\subset  \mathbb{R}_+\times \mathbb{R}_+^*$),
$$\mathbb{P}_{(x^1,0)}(X_t\in A| T_0>t) =\nu_{1,1}\otimes
\delta_0(A),$$ (resp. $\mathbb{P}_{(0,x^2)}(X_t\in A| T_0>t)
=\delta_0\otimes \nu_{1,2}(A)$).

\medskip

We are now led to study, for $x\in (\mathbb{R}_+^*)^2$ and for $A
\subset (\mathbb{R}^2\backslash \{(0,0)\}$, the asymptotic
behavior of \be \label{decomp}\P_x(X_t \in A \, | \, T_0>t) =
\frac{\P_x(X_t \in A)}{\P_x(T_{\partial D}>t)} \,
 \frac{\P_x(T_{\partial D}>t)}{\P_x(T_0>t)}\ee
where $T_0$ is the hitting time of the origin. We have seen in the
previous section that under \eqref{balance} and  \eqref{det}, the
hitting time  $T_{\partial D}$ is almost surely finite and that
for any $y\in
\partial D$, $T_0$ is $\P_y$ almost surely finite too.
Let us now
  study the asymptotic behavior of $$\frac{\P_x(T_{\partial
D}>t)}{\P_x(T_0>t)} \, .$$

\medskip
We  will compare the three different killing rates $\lambda_1$,
$\lambda_{1,1}$, $\lambda_{1,2}$ corresponding to
 the stopping times $T_{\partial D}$, $T_1$,
 $T_2$, (recall that $T_i$ denotes the hitting time of the axis
$x^j=0$).
\medskip

Notice that if $c_{1,2}=0$, $(X^1,X^2)=(H^1,H^2)$,
$\lambda_1=\lambda_{1,2}+\lambda_{1,1}$, $\nu_1=\nu_{1,1}\otimes
\nu_{1,2}$. Indeed a standard (and elementary) result in QSD
theory says that if $\nu$ is a QSD with absorbing set $C$, the
$\P_\nu$-law of the hitting time $T_C$ of $C$ is an exponential
law with parameter $\lambda$, where $\lambda$ is exactly the
killing rate. In addition, $\P_x(T_C>t)$ behaves like $e^{-\lambda
t}$ for large $t$. Since the minimum of two independent
exponential random variables with parameters $\lambda_{1,2}$ and
$\lambda_{1,1}$ is an exponential variable of parameter
$\lambda_{1,2}+\lambda_{1,1}$, we get
$\lambda_1=\lambda_{1,2}+\lambda_{1,1}$.
\medskip

The following decomposition is the key point to prove our main
theorem. For $x\in (\mathbb{R}_+)^2\backslash \{(0,0)\}$
\begin{equation}\label{eqtzero}
\P_x(T_0>t)= \P_x(T_{\partial D}>t) + \sum_{i=1,2} \,
\P_x\left(T_{\partial D}\leq t \, , \, X^j_{T_{\partial D}}=0 \, ,
\, \P_{(X^i_{T_{\partial D}},0)}(T_0 > t - T_{\partial D})\right)
\end{equation}
where $j=1$ if $i=2$ and conversely. We are interested in the
asymptotic behavior of this quantity, i.e we have to compare the
three  killing rates $\lambda_1$, $\lambda_{1,1}$ and
$\lambda_{1,2}$.

We obtain  our main result.

\begin{theorem}
\label{genial} Under \eqref{balance} and \eqref{det}, there exists
a unique probability measure $m$ such that for all $x\in D$, for
all $A\subset (\mathbb{R}_+)^2\backslash \{(0,0)\}$, \ben
\lim_{t\to \infty} \P_x(X_t \in A \, | \, T_0>t)=\nu(A). \een In
addition, we have
 the following description of $\nu$.

\begin{itemize}
\item Competition case ($c_{12}$ and $c_{21}$ positive). We have
$\lambda_1>\lambda_{1,1}+\lambda_{1,2}$, and the support of the
QSD in included in the boundaries.

Furthermore when $\lambda_{1,2} > \lambda_{1,1}$ (resp. $<$),
$\nu$ is given by $\nu_{1,1}\otimes \delta_0$ (resp. $\delta_0
\otimes \nu_{1,2}$).

In other words, the model exhibits an intermediary time scale for
which only one type (the dominant trait) is surviving.

\medskip

\item Cooperation case ($c_{12}$ and $c_{21}$ negative). We have
two different situations.
\begin{itemize}
\item If $\lambda_1>\lambda_{1,i}$ for $i=1$ or $i=2$, the
conclusion is the same as in the competition case. \item If
$\lambda_1<\lambda_{1,i}$ for $i=1$ and $i=2$, then \be
\label{qsdgenial} \nu=  {1\over 1 + \sum_{i,j=1}^2 {c_j\over
\lambda_{1,i}-\lambda_1}}\left({c_2\over \lambda_{1,1}-\lambda_1}
\nu_{1,1}\otimes \delta_0 + \delta_0 \otimes {c_1\over
\lambda_{1,2}-\lambda_1}\nu_{1,2} + \nu_1\right),\ee where
$$c_j = \mathbb{P}_{\nu_1}(X^j_{T_{\partial D}}=0).$$

We thus have a positive probability to have coexistence of the two
species.
\end{itemize}
\end{itemize}

\end{theorem}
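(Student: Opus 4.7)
The plan is to apply the strong Markov property at $T_{\partial D}$ and to work from the decomposition \eqref{eqtzero}; the same splitting applies to the numerator $\P_x(X_t \in A, T_0>t)$, since $X$ is trapped on $\partial D$ after $T_{\partial D}$. For $A \subset D$ only the interior piece $\P_x(X_t \in A, T_{\partial D}>t)$ contributes, whereas for $A$ lying on axis $i$ only the corresponding boundary piece contributes. The three building blocks come with sharp exponential asymptotics already available: Theorem \ref{thmbranch} gives $e^{\lambda_1 t}\P_x(X_t \in A, T_{\partial D}>t) \to \eta_1(x) \int_A \eta_1\, d\mu$, and the one-dimensional analysis of \cite{Cat07} applied to $H^i$ yields, for $g_i(z,s) := \P_{(z,\,\text{axis}\,i)}(T_0>s)$, the limit $e^{\lambda_{1,i}s} g_i(z,s) \to \kappa_i \eta_{1,i}(z)$ uniformly on compacts, with an explicit positive $\kappa_i$.

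Each of the two boundary pieces is a convolution $\int_0^t \rho^j_x(ds,dz)\, g_i(z,t-s)$, where $\rho^j_x$ is the finite joint law of $(T_{\partial D}, X^i_{T_{\partial D}})$ restricted to $\{X^j_{T_{\partial D}}=0\}$ (finite by Proposition \ref{propbord}). Two regimes must be treated separately. If $\lambda_{1,i}<\lambda_1$, Proposition \ref{propbord} provides $\E_x[e^{\lambda T_{\partial D}}]<\infty$ for every $\lambda<\lambda_1$, hence for $\lambda=\lambda_{1,i}$, and a dominated-convergence argument using the pointwise convergence of $g_i$ produces $e^{\lambda_{1,i}t}\times(\text{boundary } i) \to \kappa_i \E_x[\BBone_{\{X^j_{T_{\partial D}}=0\}}\, \eta_{1,i}(X^i_{T_{\partial D}})\, e^{\lambda_{1,i}T_{\partial D}}]$. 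If on the contrary $\lambda_{1,i}>\lambda_1$, the leading contribution comes from exit times $s$ close to $t$; a change of variable $r=t-s$ together with the integrability of $r \mapsto e^{\lambda_1 r}g_i(z,r)$ (ensured by $\lambda_1<\lambda_{1,i}$) and the Yaglom asymptotic $e^{\lambda_1 s}\P_x(T_{\partial D}>s) \to C\, \eta_1(x)$ yields, via a renewal-type argument, $e^{\lambda_1 t}\times(\text{boundary } i) \to c_j\,\eta_1(x)/(\lambda_{1,i}-\lambda_1)$, where the constant $c_j = \P_{\nu_1}(X^j_{T_{\partial D}}=0)$ arises from the Yaglom exit distribution.

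Combining both regimes, $\lim_{t\to\infty}\P_x(X_t\in A \mid T_0>t)$ is obtained by dividing the numerator asymptotic by the denominator asymptotic, the latter being $\P_x(T_{\partial D}>t)$ plus the two boundary pieces. In the competition case one first establishes $\lambda_1 > \lambda_{1,1}+\lambda_{1,2} \geq \max(\lambda_{1,1},\lambda_{1,2})$ via a Rayleigh-quotient comparison: the product $\eta_{1,1}(x^1)\eta_{1,2}(x^2)$ is a legitimate test function for $-L$ on $\mathbb L^2(\mu)$, and the strictly positive cross-potential $\alpha (x^1)^2 (x^2)^2$ (with $\alpha>0$) added to the separable part of $V$ makes its Dirichlet ratio strictly larger than the independent-case value $\lambda_{1,1}+\lambda_{1,2}$. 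Since $\lambda_1 > \max(\lambda_{1,i})$, the interior piece is negligible, the conditioning concentrates on the axis with the smaller $\lambda_{1,i}$, and one recovers $\nu_{1,1}\otimes\delta_0$ or $\delta_0\otimes\nu_{1,2}$. In the weak-cooperation case ($\alpha<0$), both orderings of $\lambda_1$ and the $\lambda_{1,i}$ may occur; when $\lambda_1$ is strictly smaller than both, all three pieces decay at the common rate $e^{-\lambda_1 t}$ and their limits assemble into the explicit mixture \eqref{qsdgenial}.

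The main obstacle I foresee is the convolution asymptotic in the coexistence regime $\lambda_1<\lambda_{1,i}$: applying dominated convergence there requires a uniform-in-$z$ control of $e^{\lambda_{1,i}s}g_i(z,s)$ on the support of the exit distribution, and the delicate interchange of the interior and boundary Yaglom regimes inside the convolution must be justified. Both reductions rely on the ultracontractivity of the killed semigroups available through the Appendix; once handled, the coefficient $c_j=\P_{\nu_1}(X^j_{T_{\partial D}}=0)$ emerges naturally, and collecting the three normalised limits yields \eqref{qsdgenial}. The spectral inequality $\lambda_1 > \lambda_{1,1}+\lambda_{1,2}$ in the competition case is a secondary but also delicate point, which I would treat by careful Rayleigh-quotient bookkeeping against the independent-case product structure.
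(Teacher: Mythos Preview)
Your overall plan (strong Markov at $T_{\partial D}$, decomposition \eqref{eqtzero}, convolution analysis, comparison of the three killing rates) is the same as the paper's. There are two places where your execution diverges, one of which is a genuine gap.

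\textbf{The inequality $\lambda_1 \geq \lambda_{1,1}+\lambda_{1,2}$ in the competition case.} Your Rayleigh-quotient argument is not correct as written. Adding the positive term $\alpha (x^1)^2(x^2)^2$ to $V$ modifies the reference measure $\mu=e^{-2V}dx$, so both numerator and denominator of the Dirichlet ratio $\tfrac{1}{2}\int|\nabla f|^2 d\mu / \int f^2 d\mu$ are multiplied by the same weight $e^{-2\alpha(x^1)^2(x^2)^2}$; there is no monotonicity to exploit. In the Schr\"odinger picture one would need $G\geq G_1+G_2$ pointwise, and a direct computation shows $G-G_1-G_2$ has leading term $-2\alpha(r_1+r_2)(x^1)^2(x^2)^2$ near the origin, which is negative. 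The paper instead uses a pathwise comparison: for $c_{12}>0$ the extra drift term $-2\alpha x^i(x^j)^2$ pushes $X^i$ toward $0$, and the same ODE argument as in the proof of Theorem \ref{exi} yields $X^i_t\leq H^i_t$ for $i=1,2$ up to $T_{\partial D}$. Since $H^1,H^2$ are independent, $T_{\partial D}^X\leq \min(T_0^{H^1},T_0^{H^2})$ and the inequality on rates follows immediately. This coupling is the right tool here.

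\textbf{The convolution in the coexistence regime $\lambda_1<\lambda_{1,i}$.} Your renewal heuristic is in the right direction, but to make $c_j=\P_{\nu_1}(X^j_{T_{\partial D}}=0)$ appear you need more than the survival asymptotic $e^{\lambda_1 s}\P_x(T_{\partial D}>s)\to C\eta_1(x)$: you need the \emph{joint} law of $(T_{\partial D},X_{T_{\partial D}})$ to behave, for large exit times, like its $\P_{\nu_1}$ counterpart. The paper's device is to split the convolution at a large time $T$, kill the $[0,T]$ part using the uniform exponential moment $\sup_\theta \E_\theta[e^{\lambda T_0^i}]<\infty$ for $\lambda<\lambda_{1,i}$ (Corollary 7.9 of \cite{Cat07}, exactly the uniform-in-$z$ control you flag), and on $\{T_{\partial D}>T\}$ use the Markov property at time $T$ together with the Yaglom limit to replace $\P_x$ by $\P_{\nu_1}$ up to an $\varepsilon$-error. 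Under $\P_{\nu_1}$ the exit time is exactly exponential($\lambda_1$) and, crucially, the exit position is \emph{independent} of the exit time; the convolution then reduces to $c_j\lambda_1\int_0^\infty e^{-(\lambda_{1,i}-\lambda_1)r}\,dr=c_j\lambda_1/(\lambda_{1,i}-\lambda_1)$. This substitution-by-$\nu_1$ step is the concrete content behind your ``renewal-type argument''.
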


\begin{remark}
The only remaining case is the one where $\lambda_1=\lambda_{1,1}=\lambda_{1,2}$. However the
proof of the theorem indicates that this situation is similar to the competition case, though we
have no rigorous proof of it. In the discrete setting (as claimed in \cite{Champ08}), a fine
analysis of Perron-Frobenius type is in accordance with our guess.
\end{remark}

\begin{proof}

 Our domination arguments allow us to compare the killing
rates :
\medskip

\begin{itemize}
\item \quad {\bf The competition case.} This is the case $c_{12} >
0$. In this case we can show with a similar argument as in the
proof of Theorem \ref{exi} that starting from the same initial
point, $X_t^i \leq H_t^i$ for $i=1,2$. Hence $\lambda_1 \geq
\lambda_{1,1} + \lambda_{1,2}$. Since the killing rates are
positive, it follows that $\lambda_1 > \lambda_{1,i}$ for $i=1,
2$. In particular $\E_x \left[ e^{\lambda_{1,i} T_{\partial
D}}\right] < +\infty$. Hence
$$\liminf_{t \to +\infty} \, e^{\lambda_{1,i} t} \, \P_x\left(T_{\partial D}\leq t \, , \,
X^j_{T_{\partial D}}=0 \, , \, \P_{(X^i_{T_{\partial D}},0)}(T_0 >
t - T_{\partial D})\right) = $$
\begin{eqnarray*}
& = & \liminf_{t \to +\infty} \E_x\left(\BBone_{T_{\partial D}\leq
t} \, \BBone_{X^i_{T_{\partial D}}=0} \, e^{\lambda_{1,i}
T_{\partial D}} \, e^{\lambda_{1,i} (t -T_{\partial
D})}\P_{(0,X^j_{T_{\partial D}})}(T_0 > t - T_{\partial D})\right)
\\ & \geq &  \, \E_x\left(\BBone_{X^i_{T_{\partial D}}=0}
\, \, e^{\lambda_{1,i} T_{\partial D}} \,
\eta_{1,i}(X^i_{T_{\partial D}})\right) > 0 \quad (\textrm{ at
least for one $i$)},
\end{eqnarray*}
according to Fatou's lemma, the positivity of the ground state and
by Proposition \ref{propbord}. It follows that the rate of decay
of $\P_x(T_0>t)$ is at most $e^{- \lambda_{1,i} t}$, while the one
of $P_x(T_{\partial D}>t)$ is $e^{- \lambda_1 t}$, hence as $t \to
+ \infty$, $$\P_x(X_t \in A \, | \, T_0>t) \to 0 \, ,$$ if
$A\subset D$. Hence, the support of the quasi-stationary
distribution will be included in the boundaries. Thanks to
Proposition \ref{propbord}, we know that both terms in the sum
$\sum_{i=1,2}$ in \eqref{eqtzero} are positive, so that the
leading term in the sum will be equivalent to $\P_x(T_0>t)$. If
$\lambda_{1,1}>\lambda_{1,2}$ this leading term is of order
$e^{-\lambda_{1,2}t}$, the proof being exactly the same as before.
The value of the quasi-stationary distribution follows.
\medskip

\item \quad {\bf The weak cooperative case.} This is the case if
$c_{1,2} < 0$. Here a comparison argument gives $\lambda_1 \leq
\lambda_{1,1} + \lambda_{1,2}$. But there is no a priori reason
for $\lambda_1$ to be smaller than $\lambda_{1,i}$. In particular
if $\lambda_1 > \lambda_{1,i}$ for $i=1$ or $2$, we are in the
same situation as in the competition case, and the quasi-limiting
distribution is supported by $\partial D$ because one exits from
$D$ by hitting $x^j=0$ with a positive probability as we mentioned
in proposition \ref{propbord}.
\medskip

It remains to look at the case $\lambda_1 \leq \lambda_{1,i}$ for
$i=1,2$.

Denote by $\psi_j$ the law of $T_{\partial D}$ when the process
exits $D$ by hitting $x^j=0$. Denote by $\zeta_s^i$ the
conditional law of $X^i_{T_{\partial D}}$ knowing $T_{\partial D}
=s$ and $X^j_{T_{\partial D}}=0$. Then \begin{eqnarray*} &&
e^{\lambda_1 t} \, \P_x\left(T_{\partial D}\leq t \, , \,
X^j_{T_{\partial D}}=0 \, , \, \P_{(X^i_{T_{\partial D}},0)}(T_0 >
t - T_{\partial D})\right)\\
& = & \int_0^{+\infty} \, e^{\lambda_1 t} \, \BBone_{s<t} \,
\E_{\zeta^i_s}(\BBone_{T_0>t -s}) \, \psi_j(ds).
\end{eqnarray*}

It has been proved in Corollary 7.9 of \cite{Cat07} that for any
$\lambda<\lambda_{1,i}$,
$$\sup_{\theta^i} \, \E_{\theta^i} [ e^{\lambda T_0^i}] <
+\infty$$ where $\theta^i$ describes the set of all probability
measures on $x^j=0, x^i>0$ and $T_0^i$ is the first hitting time
of $0$ for the one dimensional  logistic Feller diffusion $H^i$.

Hence $\E_{\zeta^i_s}(\BBone_{T_0>t -s}) \leq C \, e^{- \lambda
(t-s)}$ for some universal constant $C$ and all $s$. It follows
that, provided $\lambda_1 < \lambda_{1,i}$ (in which case we
choose $\lambda_1 < \lambda < \lambda_{1,i}$), for all $T>0$,
$$\lim_{t \to +\infty} \, \int_0^{T} \, e^{\lambda_1 t} \,
\BBone_{s<t} \, \E_{\zeta^i_s}(\BBone_{T_0>t -s}) \, \psi_j(ds) =
0 \, .$$


It remains to study $$\lim_{t \to +\infty} \, e^{\lambda_1 t} \,
\P_x\left(t\geq T_{\partial D}> T \, , \, X^j_{T_{\partial D}}=0
\, , \, \P_{(X^i_{T_{\partial D}},0))}(T_0 > t - T_{\partial
D})\right) \, .$$ To this end we first remark that for $T$ large
enough we may replace $\P_x$ by $\P_{\nu_1}$.  Indeed, denote
$$h(T,t,y)=\P_y\left( t-T \geq T_{\partial D} > 0 \, , \,
X^j_{T_{\partial D}}=0 \, , \, \P_{(X^i_{T_{\partial D}},0)}(T_0 >
t -T - T_{\partial D})\right) \, .$$ Since $\nu_1$ is the Yaglom
limit related to $D$ and $0\leq h(T,t,y)\leq 1$ for all $y\in D$
and all $T<t$, for $\varepsilon
>0$ one can find $T$ large enough such that for all $t>T$, $$\P_x\left(t\geq T_{\partial D}> T \,
, \, X^j_{T_{\partial D}}=0 \, , \, \P_{(X^i_{T_{\partial
D}},0)}(T_0
> t - T_{\partial D})\right) = $$
\begin{eqnarray*}
& = & \E_x(\BBone_{T_{\partial D}>T} \, h(T,t,X_T)) \\ &
\approx_\varepsilon & \, \E_{\nu_1}(\BBone_{T_{\partial D}>T} \,
h(T,t,X_T)) \\ & = & \P_{\nu_1}\left(t\geq T_{\partial D}> T \, ,
\, X^j_{T_{\partial D}}=0 \, , \, \P_{(X^i_{T_{\partial
D}},0)}(T_0
> t - T_{\partial D})\right)
\end{eqnarray*}
where $a \approx_\varepsilon b$ means that the ratio $a/b$ satisfies $1-\varepsilon \leq a/b \leq
1+ \varepsilon$.

Now, since $\nu_1$ is a quasi-stationary distribution, starting
from $\nu_1$, the law of $T_{\partial D}$ is the exponential law
with parameter $\lambda_1$, so that $\psi_j(ds)= c_j \, \lambda_1
\, e^{- \lambda_1 s} ds$ where $c_j=\P_{\nu_1} (X^j_{T_{\partial
D}}=0)$ is the exit probability first hitting the half-axis $j$.
In addition the conditional law $\zeta^i_s$ does not depend on
$s$, we shall denote it by $\pi^i$ from now on. This yields
\begin{eqnarray*}&&e^{\lambda_1 t} \, \P_{\nu_1}\left(t\geq T_{\partial D}> T \, , \,
X^j_{T_{\partial D}}=0 \, , \, \P_{(X^i_{T_{\partial D}},0)}(T_0 >
t - T_{\partial D})\right)\\
& = & e^{\lambda_1 t} \, \int_T^t \, c_j \, \lambda_1 \, e^{-\lambda_1 s} \, \P_{\pi^i}(T_0> t-s) \, ds \\
& = & c_j \, \lambda_1 \, \int_T^t \, e^{-(\lambda_{1,i}
-\lambda_1) (t-s)} \, \left(e^{\lambda_{1,i} (t-s)} \,
\P_{\pi^i}(T_0> t-s)\right) \, ds \, .
\end{eqnarray*}

Recall that $\nu_{1,i}$ is the unique Yaglom limit on axis $i$. As
shown in \cite{Cat07}, for any initial law $\pi$, in particular
for $\pi^i$, $\lim_{t \to +\infty} \, e^{\lambda_{1,i} t} \,
\P_{\pi}(T_0> t) = 1$. Using Lebesgue bounded convergence theorem
we thus obtain that for all $T>0$, $$\lim_{t \to +\infty} \,
e^{\lambda_1 t} \, \P_{\nu_1}\left(t\geq T_{\partial D}> T \, , \,
X^j_{T_{\partial D}}=0 \, , \, \P_{(X^i_{T_{\partial D}},0)}(T_0 >
t - T_{\partial D})\right) = \frac{c_j \, \lambda_1}{\lambda_{1,i}
- \lambda_1} \, .
$$

Since the result does not depend upon $T$, we may use our
approximation result of $\P_x$ by $\P_{\nu_1}$ for all
$\varepsilon$ hence finally obtain for all $x$, $$\lim_{t \to
+\infty} \, e^{\lambda_1 t} \, \P_{x}\left(t\geq T_{\partial D}> T
\, , \, X^j_{T_{\partial D}}=0 \, , \, \P_{(X^i_{T_{\partial
D}},0)}(T_0 > t - T_{\partial D})\right) = \frac{c_j \,
\lambda_1}{\lambda_{1,i} - \lambda_1} \,.
$$

Let us now consider $A\subset \mathbb{R}^2\backslash\{(0,0)\}$. To
compute $\mathbb{P}_x(X_t\in A, T_0>t)$, we have to write
$A=(A\cap D)+ (A\cap \mathbb{R}_+^*\times \{0\}) + (A\cap
\{0\}\times \mathbb{R}_+^* )$ and we will compute separately the
three terms. Let us first remark that Since $\nu_1$ is the unique
QSD related to the absorbing set $\partial D$, it is equal to the
Yaglom limit and thus
$$\frac{\P_x(X_t \in A\cap D)}{\P_x(T_{\partial D}>t)} \to_{t\to +\infty} \nu_1(A\cap
D).$$ Let us now write $A_1=A\cap \mathbb{R}_+^*\times \{0\}$.
Thus \ben \frac{\P_x(X_t \in A_1)}{\P_x(T_{\partial D}>t)} =
\frac{\P_x(X_t \in A_1)}{\P_x(T^1>t, T^2=T_{\partial
D})}\frac{\P_x(T^1>t, T^2=T_{\partial D})} {\P_x(T_{\partial
D}>t)}. \een By a similar reasoning as previously, one can prove
that $\frac{\P_x(X_t \in A_1)}{\P_x(T^1>t, T^2=T_{\partial D})}
\to_{t\to +\infty} \nu_{1,1}(A_1)$ and that $\frac{\P_x(T^1>t,
T^2=T_{\partial D})} {\P_x(T_{\partial D}>t)} \to_{t\to +\infty}
{c_2 \lambda_1\over \lambda_{1,1}-\lambda_1}$. A similar result
inverting indices $1$ and $2$ holds for the third term. That
concludes the proof of \eqref{qsdgenial}, in the case where
$\lambda_1<\lambda_{1,i}$, for $i=1, 2$.

\bigskip
This result together with the competition case indicates (but we do not have any rigorous proof of
this) that if $\lambda_1=\lambda_{1,i}$ for some $i$, then the QSD (conditioned to hitting the
origin) is again supported by the axes.
\end{itemize}\end{proof}

\bigskip

The conclusion of this study is partly intuitive. In the
competition case, one of the species will kill the other one
before dying. In the cooperation case, apparently, all can happen,
including coexistence of the two types up to a common time of
extinction. The problem is to know whether all situations for the
killing rates are possible or not.

Here is an heuristic simple argument (to turn it in a rigorous one involves some technicalities):
Fix all coefficients equal say to $1$ except $c_{12}=c_{21}=- c$ with $c>0$. Condition \eqref{det}
reads $0<c<1$. In this situation $\lambda_{12}=\lambda_{11}=\lambda$ and both $\lambda$ and
$\lambda_1$ depend continuously on $c$. When $c \to 0$ (near the independent case) we know that
$\nu$ concentrates on both axes. When $c \to 1$ the rate $\lambda_1$ of return from infinity
decreases to 0 (this is the point to be carefully checked) so that there is some intermediate
value $c_c$ where the phase transition $\lambda=\lambda_1$ occurs.
\bigskip

\appendix
\section{\bf Killed Kolmogorov diffusion processes and their spectral theory.}\label{seckolmo}

Let $D$ be an open connected subset of $\R^d$, $V$ a $C^2$ function defined on $D$. We introduce
the stochastic differential equation
\begin{equation}\label{eqsde}
dX_t = dB_t \, - \, \nabla V(X_t) dt \quad , \quad X_0=x \in D \, ,
\end{equation}
for which a pathwise unique solution exists up to an explosion time $\xi$. The law of the process
starting from $x$ will be denoted by $\P_x$, and for a non-negative measure $\nu$ on $D$ we denote
by $\P_\nu = \int \P_x \, \nu(dx)$.

For a subset $A$ of the closure $\bar D$ of $D$ and for $\varepsilon > 0$ we introduce
\begin{equation}\label{eqhitting}
T_A^\varepsilon \, = \, \inf \, \{ s\geq 0 \, ; \, d(X_s,A)<\varepsilon \} \quad , \quad T_A =
\lim_{\varepsilon \to 0} \, T_A^\varepsilon \, ,
\end{equation}
where $d(.,.)$ denotes the usual euclidean distance. $T_A^\varepsilon$ and $T_A$ are thus stopping
times for the natural filtration. We shall be mainly interested to the cases when $A$ is a subset
of the boundary $\partial D$.

Our first hypothesis is that the process cannot explode unless it reaches the boundary i.e
\begin{equation}\label{eqh1}
 \textrm{for all $x \in D$, $\xi \geq T_{\partial D} \, , \, \P_x$
almost surely.}
\end{equation}
If $D$ is bounded, \eqref{eqh1} is automatically satisfied. If $D$ is not bounded, it is enough to
find some Lyapunov function. We shall make some assumptions later on implying that a specific
function is a Lyapunov function, so the discussion on \eqref{eqh1} will be delayed.
\medskip

The peculiar aspect of gradient drift diffusion process as
\eqref{eqsde} is that the generator $L$ defined  for any function
$g\in C^\infty$ by
\begin{equation}\label{eqgene}
L \, = \, \frac 12 \, \Delta \, - \, \nabla V . \nabla \quad ,
\end{equation}
is symmetric w.r.t. the measure $\mu$ defined by
\begin{equation}\label{eqgene2}
\mu(dx) \, = \, e^{-2V(x)} \, dx \, .
\end{equation}

The following properties of the process can be proved exactly as
Proposition 2.1, Theorem 2.2  and the discussion at the beginning
of Section 3 in \cite{Cat07}.

\begin{theorem}\label{thmstructure}
Assume that \eqref{eqh1} holds. Then there exists a self-adjoint
semi-group $(P_t)$ on $\mathbb{L}^2(\mu)$ such that for all
bounded Borel function $f$,
$$P_t f(x) := \E_x[f(X_t) \, \BBone_{t<T_{\partial D}}].$$
Moreover  for any bounded Borel function $F$ defined on $\Omega =
C([0,t],D)$ it holds
$$\E_x \left[F(\omega) \, \BBone_{t<T_{\partial D}(\omega)}\right] \, =
\, \E^{\W_x} \left[ F(\omega) \, \BBone_{t<T_{\partial D}(\omega)} \, \exp \left( V(x) \, - \,
V(\omega_t) - \, \frac 12 \, \int_0^t \, (|\nabla V|^2 - \Delta V)(\omega_s) ds\right)\right]$$
where $\E^{\W_x}$ denotes the expectation w.r.t. the Wiener measure starting from $x \in D$.
\smallskip

It follows that for all $x\in D$ and all $t>0$ there exists some density $r(t,x,.)$ that verifies
$$P_t f(x)  = \int_D \, f(y) \,
r(t,x,y) \, \mu(dy)$$ for all bounded Borel function $f$.
\smallskip

If in addition there exists some $C>0$ such that $|\nabla
V|^2(y)-\Delta V(y) \geq - C$ for all $y \in D$, then for all
$t>0$ and all $x \in D$, $r(t,x,\cdot)\in \mathbb{L}^2(\mu)$ with
\begin{equation}\label{eql2}
\int_D \, r^2(t,x,y) \, \mu(dy) \, \leq (1/2\pi t)^{\frac d2} \, e^{Ct} \, e^{2V(x)} \, .
\end{equation}
\end{theorem}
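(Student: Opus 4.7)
The plan is to adapt the proofs of Proposition 2.1 and Theorem 2.2 of \cite{Cat07} to arbitrary dimension, in three steps. First, I would construct the self-adjoint semi-group via Dirichlet form theory. Integration by parts on $C_0^\infty(D)$ shows that $\mathcal E(f,g) = \tfrac 12 \int_D \nabla f \cdot \nabla g \, d\mu$ is a closable positive symmetric form on $\mathbb L^2(\mu)$ whose pre-generator is exactly the $L$ of \eqref{eqgene}. Its closure is a regular Dirichlet form (with Dirichlet boundary condition on $\partial D$), and Fukushima's theory \cite{Fuku} yields a symmetric sub-Markov semi-group $P_t$ on $\mathbb L^2(\mu)$. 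To identify it with the probabilistic killed semi-group $f \mapsto \E_x[f(X_t)\mathbf{1}_{t<T_{\partial D}}]$, I would invoke the standard argument that both share the generator $L$ on the core $C_0^\infty(D)$; hypothesis \eqref{eqh1} rules out any extra killing in the interior, so Markov uniqueness forces equality on all bounded Borel functions.

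Second, I would obtain the Girsanov representation by localization. Let $(K_n)$ exhaust $D$ and $\tau_n$ be the first exit time of $X$ from $K_n$. On $[0,\tau_n]$ the drift $\nabla V$ is bounded, so the classical Girsanov theorem applies with density $\exp\bigl(-\int_0^t \nabla V(\omega_s) \cdot d\omega_s - \tfrac 12 \int_0^t |\nabla V|^2(\omega_s)\, ds\bigr)$. Applying It\^o's formula to $V(\omega_s)$ under $\W_x$ rewrites the stochastic integral as $V(\omega_t) - V(x) - \tfrac 12 \int_0^t \Delta V(\omega_s)\, ds$, producing the announced exponential weight. Since \eqref{eqh1} gives $\tau_n \uparrow T_{\partial D}$ $\P_x$-almost surely, monotone convergence on non-negative $F$ (splitting a general bounded $F$ into its positive and negative parts) then delivers the identity on all of $[0,t]$.

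Third, I would use Girsanov to read off the density and bound it in $\mathbb L^2$. The $\W_x$-marginal at time $t$ is the Gaussian $p_t(x,y) = (2\pi t)^{-d/2} e^{-|x-y|^2/(2t)}$; rewriting $dy = e^{2V(y)}\mu(dy)$ and conditioning the Girsanov identity on $\omega_t = y$ shows that $P_t(x,\cdot)$ has density
\[
r(t,x,y) = e^{V(x)+V(y)} p_t(x,y)\, \E^{\W_x}\!\Bigl[\mathbf{1}_{t<T_{\partial D}} \exp\!\bigl(-\tfrac 12 \int_0^t (|\nabla V|^2 - \Delta V)(\omega_s)\, ds\bigr) \,\Big|\, \omega_t = y\Bigr]
\]
with respect to $\mu$. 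Under the extra assumption $|\nabla V|^2 - \Delta V \geq -C$ the conditional expectation is pointwise bounded by $e^{Ct/2}$, hence $r(t,x,y) \leq e^{Ct/2} e^{V(x)+V(y)} p_t(x,y)$, and the explicit Gaussian integral $\int_{\R^d} p_t^2(x,y)\, dy = (4\pi t)^{-d/2}$ yields $\int_D r^2(t,x,y)\, \mu(dy) \leq e^{Ct} e^{2V(x)} (4\pi t)^{-d/2}$, which is even slightly sharper than \eqref{eql2}. The main obstacle will be Step 2: $V$ and $\nabla V$ typically blow up at $\partial D$ (as is precisely the case for the SLVKP near the axes), so Girsanov cannot be invoked globally; the compact exhaustion $(K_n)$ combined with \eqref{eqh1} is the standard but essential device that localizes the drift cleanly, after which positivity of the exponential factor lets monotone convergence finish the argument.
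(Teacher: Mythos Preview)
Your proposal is correct and follows essentially the same route as the paper, which simply states that the result ``can be proved exactly as Proposition 2.1, Theorem 2.2 and the discussion at the beginning of Section 3 in \cite{Cat07}'' without spelling out details. Your three steps---Dirichlet form construction via \cite{Fuku}, Girsanov by compact exhaustion and monotone convergence, and the $\mathbb L^2$-bound on the density via the pointwise estimate $r(t,x,y)\le e^{Ct/2}e^{V(x)+V(y)}p_t(x,y)$---are precisely the multidimensional adaptation of those arguments, and your constant $(4\pi t)^{-d/2}$ is indeed sharper than the paper's $(2\pi t)^{-d/2}$.
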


\begin{remark}\label{remexplo}
If $V(x) \to +\infty$ as $x \to \infty$ in $D$, the condition
$|\nabla V|^2(y)-\Delta V(y) \geq - C$ for all $y \in D$ is
sufficient for \eqref{eqh1} to hold. Just use Ito's formula with
$V$ as in \cite{Ro99} Theorem 2.2.19. \hfill $\diamondsuit$
\end{remark}

\smallskip

We thus have that for any measurable and compact subset $A \subset
D$ and any $x \in D$,
\begin{eqnarray}\label{eql2bis}
\P_x(X_t \in A \, , \, T_{\partial D}>t) & = & \int \, \P_y(X_{t-1} \in A \, , \, T_{\partial
D}>t-1) \, r(1,x,y) \, \mu(dy)
\\ & = & \int P_{t-1} (\BBone_A)(y) \, r(1,x,y) \, \mu(dy)  \nonumber \\
& = & \int \, \BBone_A(y) \, (P_{t-1}r(1,x,.))(y) \, \mu(dy) \, . \nonumber
\end{eqnarray}
Hence, the long time behavior of the law of the killed process is
completely described by $P_t r(1,.,.)$.

Since both $\BBone_A$ and $r(1,x,.)$ are in $\L^2(\mu)$, the
$\L^2$ spectral theory of $P_t$ is particularly relevant.

\medskip

 This spectral theory can be deduced from the much well
known spectral theory for Schr\"{o}dinger operators, thanks to the
following standard transform:  define for $f \in \L^2(D,dx)$
(resp. $f \in C^\infty_0(D)$)
\begin{equation}\label{eqschro}
\tilde P_t (f) \, = \, e^{-V} \, P_t(f \, e^V) \quad , \quad \tilde L f = \frac 12 \, \Delta f \,
- \, \frac 12 \, (|\nabla V|^2 - \Delta V) \, f \, .
\end{equation}
$\tilde P_t$ is then a self-adjoint strongly continuous semi-group
on $\L^2(D,dx)$ whose generator coincides with $\tilde L$ on
$C^\infty_0(D)$. Notice that $\tilde P_t$ has a stochastic
representation as a Feynman-Kac semi-group, i.e.
\begin{equation}\label{eqfk}
\tilde P_{t}f(x)= \, \E^{\W_x} \left[ f(\omega(t)) \, \BBone_{t<T_{\partial D}} \, \exp\left( - \,
\frac 12 \, \int_0^t \, (|\nabla V|^2 - \Delta V)(\omega_s) ds\right)\right] \, .
\end{equation}

The spectral relationship is clear: if $\lambda$ is some
eigenvalue for $\tilde P_t$ associated to $\psi$ (i.e. $\tilde P_t
(\psi) = e^{- \lambda t} \psi$), it is an eigenvalue of $P_t$
associated with $\eta = e^V \, \psi$ and conversely.
\smallskip

For simplicity we shall impose conditions ensuring that the
spectrum is discrete, in particular reduced to the pure point
spectrum. Necessary and sufficient conditions for this property to
hold have been obtained by Maz'ya and Shubin (see \cite{MS})
extending results by Molchanov in 1953. The criterion is written
in terms of Wiener capacity, hence not very easy to directly read
on the potential $V$. We shall here assume a less general but more
tractable condition taken from the Euclidean case explained in
chapter 3 of Berezin and Shubin \cite{BS}. To this end we now
introduce our main hypotheses:

\begin{definition}\label{defhypo1}
\begin{enumerate}
\item[(1)]  We say that hypothesis (H1) is satisfied if \eqref{eqh1} holds and if for all $x\in
D$,
$$\P_x(T_{\partial D}<+ \infty) = 1 \, .$$  \item[(2)]  Hypothesis (H2) holds  if $$G(y) = |\nabla
V|^2(y)-\Delta V(y) \geq - C
> - \infty$$ for all $y \in D$ . \item[(3)]  Hypothesis (H3) holds if
$$\bar G(R) = \inf \, \{G(y) ; |y|\geq R
\textrm{ and } y\in D\} \to +\infty \textrm{  as } R \to \infty \,
.$$ \item[(4)]  We say that hypothesis (H4) holds if for all $R>0$
one can find an increasing sequence of compact sets $K_n(R)$ such
that the boundary of $K_n(R)\cap \bar D$ is smooth and $\bigcup_n
(K_n(R)\cap \bar D) = \bar B(0,R) \cap \bar D$, where $\bar
B(0,R)$ is the closed Euclidean ball of radius $R$.
\end{enumerate}
\smallskip

For simplicity we say that (H) holds when \eqref{eqh1} and (H2)-(H4) are satisfied.
\end{definition}

We may now state

\begin{theorem}\label{thmspectre}
Assume that (H) is satisfied, then $-L$ has a purely discrete spectrum $0 \leq \lambda_1 <
\lambda_2 < ...$. Each associated eigenspace $E_i$ is finite dimensional. If (H1) holds,
$\lambda_1 >0$.

Furthermore $E_1$ is one dimensional and we may find a (normalized) eigenfunction $\eta_1$ which
is everywhere positive. In particular for all $f,g$ in $\L^2(\mu)$, $$\lim_{t \to +\infty} \,
e^{\lambda_1 t} \, \langle g,P_t f\rangle_\mu \, = \, \langle g,\eta_1\rangle \, \langle
f,\eta_1\rangle \, .$$
\end{theorem}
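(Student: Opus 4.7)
The strategy is to transfer everything to the Schr\"odinger operator $\tilde L = \tfrac12 \Delta - W$ with $W = \tfrac12(|\nabla V|^2 - \Delta V)$ on $\mathbb L^2(D,dx)$ with Dirichlet boundary conditions on $\partial D$, since by the ground state transform \eqref{eqschro} the spectra of $-L$ on $\mathbb L^2(\mu)$ and of $-\tilde L$ on $\mathbb L^2(D,dx)$ coincide, with eigenfunctions related by $\eta=e^V\psi$. Under (H2) the potential $W$ is bounded from below, so $-\tilde L$ is a well-defined non-negative self-adjoint operator (essentially defined as the Friedrichs extension of $-\tilde L\!\restriction_{C_0^\infty(D)}$), hence all eigenvalues are $\geq 0$.

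The first step is to establish the discreteness of the spectrum. Using (H4) I would exhaust $\bar D$ by smooth compact domains $D_{n}=K_n(R_n)\cap \bar D$ and let $-\tilde L_n$ denote the Dirichlet realization of $-\tfrac12\Delta+W$ on $D_n$. Each $-\tilde L_n$ has compact resolvent by Rellich's embedding (since $D_n$ is a bounded smooth domain and $W$ is bounded on $D_n$), so it admits a purely discrete spectrum. Using (H3), for any $M>0$ there exists $R$ such that outside the ball $B(0,R)$ one has $W \geq M$; a standard Persson/IMS-localization argument (as in Chapter~3 of Berezin--Shubin) then shows that the essential spectrum of $-\tilde L$ is empty and that $-\tilde L$ itself has a purely discrete spectrum $0\leq \lambda_1<\lambda_2<\ldots \to +\infty$, with finite-dimensional eigenspaces. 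The inequality $\lambda_1<\lambda_2$ (strict order of the ground state) and the positivity of the associated $\psi_1$ follow from the standard Perron--Frobenius argument for Schr\"odinger operators: the Feynman--Kac representation \eqref{eqfk} shows that $\tilde P_t$ is positivity improving on $\mathbb L^2(D,dx)$ (Brownian motion reaches every open set before time $t$ with positive probability and the Feynman--Kac weight is strictly positive on $\{t<T_{\partial D}\}$), so the first eigenspace is one-dimensional and we can choose $\psi_1>0$; equivalently $\eta_1=e^V\psi_1>0$ on $D$.

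The remaining point is $\lambda_1>0$ under (H1). By elliptic regularity the ground state $\psi_1$ is continuous in $D$, and the quadratic growth of $W$ at infinity (from (H3)) together with the confining Agmon-type bound for Schr\"odinger ground states yields $\psi_1\in L^\infty(D)$, so $\eta_1 = e^V \psi_1$ is bounded by $e^V\|\psi_1\|_\infty$. If one had $\lambda_1 =0$, then $\eta_1 = P_t\eta_1 = \E_x[\eta_1(X_t)\BBone_{t<T_{\partial D}}]$ pointwise. Since (H1) gives $\BBone_{t<T_{\partial D}}\to 0$ $\P_x$-a.s. as $t\to\infty$, and since $\eta_1(X_t)e^{-V(X_t)}$ remains bounded by $\|\psi_1\|_\infty$, dominated convergence (combined with a small control coming from the positive-recurrent behaviour of paths before exiting, or directly via $P_t(\eta_1\wedge N)\to 0$) would force $\eta_1\equiv 0$, a contradiction.

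Finally, the convergence statement is a direct consequence of the spectral theorem: writing the self-adjoint decomposition $P_t = \sum_{i\geq 1} e^{-\lambda_i t}\Pi_i$ on $\mathbb L^2(\mu)$, where $\Pi_i$ is the orthogonal projection on $E_i$, one has
\begin{equation*}
e^{\lambda_1 t}\langle g, P_t f\rangle_\mu = \langle g,\Pi_1 f\rangle_\mu + \sum_{i\geq 2} e^{-(\lambda_i-\lambda_1)t}\langle g,\Pi_i f\rangle_\mu,
\end{equation*}
and the second term tends to $0$ since $\lambda_2>\lambda_1$; since $\eta_1$ is a unit vector, $\Pi_1 f = \langle f,\eta_1\rangle_\mu\, \eta_1$, giving the claimed limit. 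The main obstacle in the above plan is the combination of the localization argument for discreteness of the spectrum with the verification that $\eta_1$ is bounded (not merely $L^2$), which is crucial for ruling out $\lambda_1=0$ when $D$ is unbounded.
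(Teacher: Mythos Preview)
Your treatment of the discreteness of the spectrum, the simplicity and positivity of the ground state, and the final spectral convergence statement matches the paper's proof: both transfer to the Schr\"odinger operator, invoke Berezin--Shubin (their Theorem~3.1 for discreteness via a compact-embedding argument on $D\cap B(0,R)$, using (H4) exactly as you say, and their Theorem~3.4 for the Perron--Frobenius step via positivity improvement of $\tilde P_t$), and read the limit off from the spectral decomposition.

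The one genuine gap is in your argument for $\lambda_1>0$. Boundedness of $\psi_1=\eta_1 e^{-V}$ does \emph{not} provide a dominant for $\eta_1(X_t)\BBone_{t<T_{\partial D}}$ under $\P_x$, since $\eta_1=e^V\psi_1$ may be unbounded; and the truncation observation $P_t(\eta_1\wedge N)\to 0$ only gives the trivial inequality $\eta_1\geq\lim_t P_t(\eta_1\wedge N)=0$, not the upper bound you need. (The argument is salvageable if you split $\eta_1=(\eta_1\wedge N)+(\eta_1-N)^+$ and control the tail uniformly in $t\geq 1$ via $P_t((\eta_1-N)^+)(x)\leq\|r(1,x,\cdot)\|_{L^2(\mu)}\,\|\eta_1\BBone_{\eta_1>N}\|_{L^2(\mu)}$, using the $L^2$-density from Theorem~\ref{thmstructure} together with the contraction property of $P_t$; but this is not what you wrote, and it still relies on proving $\psi_1\in L^\infty$, which in the paper is established only later, in Proposition~\ref{schroding}.)

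The paper's route is shorter and avoids any pointwise bound on $\psi_1$: under (H1) one shows directly that $P_t f\to 0$ in $L^2(\mu)$ for every $f\in L^2(\mu)$. For bounded $f$ supported in a compact $K\subset D$ one has
\[
\|P_t f\|_{L^2(\mu)}^2=\langle f,P_{2t}f\rangle_\mu=\int_K f\,P_{2t}f\,d\mu\longrightarrow 0
\]
by dominated convergence on the finite measure $\mu|_K$, since $|P_{2t}f(x)|\leq\|f\|_\infty\,\P_x(T_{\partial D}>2t)\to 0$; the general case follows by density and the contraction property. This immediately forces the kernel of $L$ to be trivial, hence $\lambda_1>0$.
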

\begin{proof}
The proof of the first statement is similar to the one of Theorem
3.1 in \cite{BS}, replacing $B(0,R)$ therein by $D \cap B(0,R)$.
Hypothesis (H4) here is useful to show that the embedding
$\H^1(B(0,R) \cap  D) \hookrightarrow \L^2(B(0,R) \cap  D)$ is
compact. Indeed the result is known replacing $B(0,R)$ by $K_n(R)$
(due to the smoothness of the boundary). To get the compactness
result, it is then enough to use a diagonal procedure.

$\lambda_1 \geq 0$ is obvious since $-L$ is a non-negative
operator.

It directly follows from the representation formula in Theorem
\ref{thmstructure} (or \eqref{eqfk}) that the semi-group is
positivity improving (i.e. if $f \geq 0$ and $f \neq 0$, $P_t f
(x) > 0$ for all $x \in D$ and all $t>0$). The proof of the second
statement (non degeneracy of the ground state $\eta_1$) is thus
similar to the one of Theorem 3.4 in \cite{BS}.

Finally, as in \cite{Cat07} section 3, hypothesis (H1) implies
that for $f \in \L^2(\mu)$, $P_t f$ goes to $0$ in $\L^2(\mu)$ as
$t \to +\infty$. This  shows that $\lambda_1 > 0$.
\end{proof}

\begin{remark}\label{remspectre1d}
Contrary to the one dimensional case, for $i\geq 2$ the
eigenspaces are not necessarily one dimensional.
\end{remark}

\medskip

\section{\bf Quasi-stationary distributions and Yaglom limit in $D$.}\label{secqsd}

The aim of this section is to study the asymptotic behavior of the law of $X_t$ conditioned on not
reaching the boundary.

\subsection{\bf The general result.}

The first result is an immediate consequence of the spectral theory.
\begin{proposition}\label{corspectre}
Assume that hypothesis (H) is satisfied. If $A \subset D$ is such that $\BBone_A \in \L^2(\mu)$,
then for all $x \in D$,
$$\lim_{t \to + \infty} \, e^{\lambda_1 t} \, \P_x(X_t \in A \, , \, T_{\partial D}>t) \, = \,
\langle \BBone_A , \eta_1 \rangle \, \eta_1(x) \, .$$

In particular if $\eta_1 \notin \L^1(\mu)$, $\lim_{t \to +\infty} \P_{x}(X_t \in A \, | \,
T_{\partial D}>t) = 0$ .
\end{proposition}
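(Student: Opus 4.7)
The plan is to use the identity \eqref{eql2bis} together with the spectral expansion from Theorem \ref{thmspectre}, and identify the leading term explicitly via the eigenfunction equation $P_1\eta_1 = e^{-\lambda_1}\eta_1$.

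First I would recall from \eqref{eql2bis} that
\[
\P_x(X_t\in A,\,T_{\partial D}>t) \;=\; \bigl\langle \BBone_A,\, P_{t-1}\, r(1,x,\cdot)\bigr\rangle_\mu .
\]
Under hypothesis (H2), Theorem \ref{thmstructure} gives $r(1,x,\cdot)\in \L^2(\mu)$, and by assumption $\BBone_A\in\L^2(\mu)$. Theorem \ref{thmspectre} then applies with $g=\BBone_A$ and $f=r(1,x,\cdot)$, yielding
\[
\lim_{t\to+\infty} e^{\lambda_1(t-1)}\,\bigl\langle \BBone_A,\,P_{t-1}\,r(1,x,\cdot)\bigr\rangle_\mu \;=\; \langle \BBone_A,\eta_1\rangle_\mu\,\langle r(1,x,\cdot),\eta_1\rangle_\mu .
\]
The second factor is easy to compute: by self-adjointness of $P_1$ and the eigenfunction relation,
\[
\langle r(1,x,\cdot),\eta_1\rangle_\mu \;=\; P_1\eta_1(x) \;=\; e^{-\lambda_1}\eta_1(x).
\]
Multiplying by $e^{\lambda_1}$ gives exactly the claimed formula
\[
\lim_{t\to+\infty} e^{\lambda_1 t}\,\P_x(X_t\in A,\,T_{\partial D}>t) \;=\; \langle \BBone_A,\eta_1\rangle_\mu\,\eta_1(x).
\]

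For the second statement, assume $\eta_1\notin \L^1(\mu)$. Using (H4) I would fix an increasing sequence of sets $A_n\subset D$ with $\BBone_{A_n}\in\L^2(\mu)$ and $A_n\uparrow D$ (e.g.\ $A_n=K_n(n)\cap D$, which are relatively compact in $D$ and have finite $\mu$-mass). Since $\eta_1>0$ and $\P_x(T_{\partial D}>t)\geq \P_x(X_t\in A_n,T_{\partial D}>t)$, the first part of the proposition combined with monotone convergence gives
\[
\liminf_{t\to+\infty} e^{\lambda_1 t}\,\P_x(T_{\partial D}>t)\;\geq\; \eta_1(x)\,\langle \BBone_{A_n},\eta_1\rangle_\mu \;\xrightarrow[n\to\infty]{}\; \eta_1(x)\,\int_D\eta_1\,d\mu\;=\;+\infty.
\]
Thus $\P_x(T_{\partial D}>t)$ decays strictly slower than $e^{-\lambda_1 t}$, while for any fixed $A$ with $\BBone_A\in\L^2(\mu)$ the numerator $\P_x(X_t\in A,\,T_{\partial D}>t)$ decays exactly like $e^{-\lambda_1 t}$. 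Taking the ratio yields $\P_x(X_t\in A\mid T_{\partial D}>t)\to 0$.

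There is no real obstacle: the proof is essentially bookkeeping around the spectral gap. The only point requiring minimal care is the exhaustion argument for the second assertion, where one must produce bounded sets of $D$ whose indicator functions are square-integrable with respect to the (possibly infinite) measure $\mu$; hypothesis (H4) supplies such an exhaustion.
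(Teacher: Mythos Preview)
Your proof is correct and follows essentially the same approach as the paper: the identity \eqref{eql2bis}, the spectral limit of Theorem \ref{thmspectre}, the eigenfunction identification $\langle r(1,x,\cdot),\eta_1\rangle_\mu = P_1\eta_1(x)=e^{-\lambda_1}\eta_1(x)$, and then an exhaustion of $D$ by compact sets for the second assertion. The only detail the paper adds is that the identity $P_1\eta_1(x)=e^{-\lambda_1}\eta_1(x)$ is a priori an $\L^2(\mu)$ equality and is upgraded to a pointwise statement via elliptic regularity of $\eta_1$.
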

\begin{proof}
Recall \eqref{eql2bis}, i.e. $\P_x(X_t \in A \, , \, T_{\partial D}>t) = \int \, \BBone_A(y) \,
(P_{t-1}r(1,x,.))(y) \, \mu(dy)$. Since both $\BBone_A$ and $r(1,x,.)$ are in $\L^2(\mu)$ we may
apply Theorem \ref{thmspectre} and get
$$\lim_{t \to + \infty} \, e^{\lambda_1 (t-1)} \, \P_x(X_t
\in A \, , \, T_{\partial D}>t) \, = \, \langle \BBone_A , \eta_1
\rangle \,  \langle r(1,x,.) , \eta_1 \rangle\, .$$ Since $\eta_1$
is an eigenfunction it holds $$e^{\lambda_1} \, \langle r(1,x,.) ,
\eta_1 \rangle =  e^{\lambda_1} P_1 \eta_1(x) = \eta_1(x),$$ where
equalities hold in $\L^2(\mu)$. Since $\eta_1$ satisfies $\tilde L
\eta_1 = - \lambda_1 \, \eta_1$ in $D$, standard results in
p.d.e.'s theory show that $\eta_1$ is regular ($C^2$) in $D$,
hence these equalities extend to all $x\in D$. This yields the
first statement.

For the second statement, choose some increasing sequence $D_n$ of compact subsets of $D$, such
that $\bigcup_n D_n = D$. It holds
\begin{equation*}
\P_{x}(X_t \in A \, | \, T_{\partial D}>t)  =  \frac{\P_{x}(X_t \in A \, , \, T_{\partial
D}>t)}{\P_{x}(X_t \in D \, , \, T_{\partial D}>t)}  \leq  \frac{\P_{x}(X_t \in A \, , \,
T_{\partial D}>t)}{\P_{x}(X_t \in D_n \, , \, T_{\partial D}>t)}
\end{equation*}
so that according to what precedes for all $n$, $$\limsup_{t \to + \infty} \, \P_x(X_t \in A \, |
\, T_{\partial D}>t) \, \leq \, \frac{\langle \BBone_A , \eta_1 \rangle}{\langle \BBone_{D_n} ,
\eta_1 \rangle} \, .$$ The infimum over $n$ on the right hand side is equal to $0$ as soon as
$\int_D \eta_1 d\mu = +\infty$, hence the result.
\end{proof}

In view of what precedes, a non trivial behavior of the
conditional law implies that $\eta_1 \in \L^1(\mu)$. Conversely
this property is enough to get the following theorem whose
statement and proof are the same as Theorem 5.2 in \cite{Cat07}.
Observe that the only thing we have to do is to control $P_t
\BBone_A$ for sets $A$ of possible infinite $\mu$ mass.

\begin{theorem}\label{thmyaglom}
Assume that hypothesis (H) holds and that $\eta_1 \in \L^1(\mu)$.

Then $d\nu_1 = \eta_1 d\mu / \int_D \eta_1(y) \mu(dy)$ is a quasi-stationary distribution, namely
for every $t\geq 0$ and any Borel subset $A$ of $D$,
$$
\P_{\nu_1}(X_t \in A \, | \, T_{\partial D}>t) = \nu_1(A) \, .
$$

Also for any $x>0$ and any Borel subset $A$ of $D$,
\begin{eqnarray}
\label{limit} \lim_{t \to +\infty}e^{\lambda_1t} \, \P_x( T_{\partial D}
> t) \, = \, \left(\int_D \eta_1(y) \mu(dy)\right) \, \eta_1(x) \, ,\end{eqnarray}
$$\lim_{t \to +\infty} e^{\lambda_1t}
 \, \P_x(X_t \in A \, , \, T_{\partial D}
> t) \, = \, \left(\int_A \eta_1(y) \mu(dy)\right) \, \eta_1(x) \,.$$ This implies
 that
$$\lim_{t \to +\infty} \, \P_x(X_t \in A \, | \, T_{\partial D}
> t) \, = \, \nu_1(A) \, ,$$
and the probability measure $\nu_1$ is the Yaglom limit distribution.
\end{theorem}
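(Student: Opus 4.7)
The plan is to adapt the strategy of Theorem~5.2 in \cite{Cat07} to the present multi-dimensional setting: first verify that $\nu_1$ is a quasi-stationary distribution by exploiting the self-adjointness of $P_t$, then establish the precise long-time equivalent for $\P_x(T_{\partial D}>t)$, and finally deduce the Yaglom convergence for any Borel set $A\subset D$. For the QSD property I would start from
$$\int \P_x(X_t\in A,T_{\partial D}>t)\,\eta_1(x)\,\mu(dx) \,=\, \int P_t\BBone_A\cdot \eta_1\,d\mu,$$
and then use the symmetry $r(t,x,y)=r(t,y,x)$ (granted by self-adjointness of $P_t$) together with Fubini, which is legitimate because $\eta_1\in\L^1(\mu)$ and $\BBone_A$ is bounded, to rewrite this integral as $\int \BBone_A\,P_t\eta_1\,d\mu = e^{-\lambda_1 t}\int_A\eta_1\,d\mu$. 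Normalizing by $\int_D\eta_1\,d\mu$ gives $\P_{\nu_1}(X_t\in A,T_{\partial D}>t)=e^{-\lambda_1 t}\nu_1(A)$; taking $A=D$ gives $\P_{\nu_1}(T_{\partial D}>t)=e^{-\lambda_1 t}$, so the conditional ratio is exactly $\nu_1(A)$, which is the QSD property.

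For the long-time equivalent, I would exhaust $D$ by an increasing sequence of compact sets $D_n$ (available by (H4)); each $\BBone_{D_n}\in\L^2(\mu)$ since $e^{-2V}$ is locally bounded. Proposition~\ref{corspectre} then yields $\lim_t e^{\lambda_1 t}\P_x(X_t\in D_n,T_{\partial D}>t)=\eta_1(x)\int_{D_n}\eta_1\,d\mu$, and letting $n\to\infty$ via monotone convergence produces the lower bound $\liminf_t e^{\lambda_1 t}\P_x(T_{\partial D}>t)\geq \eta_1(x)\int_D\eta_1\,d\mu$. For the matching upper bound I would use the Markov property at an auxiliary time $s>0$ together with symmetry of the density:
$$\P_x(T_{\partial D}>t+s)\,=\,\int r(s,x,y)\,\P_y(T_{\partial D}>t)\,\mu(dy).$$
Decomposing $r(s,x,\cdot)=e^{-\lambda_1 s}\eta_1(x)\,\eta_1+w_{s,x}$ orthogonally in $\L^2(\mu)$ with $w_{s,x}\perp \eta_1$, the principal term reproduces $e^{-\lambda_1(s+t)}\eta_1(x)\int_D\eta_1\,d\mu$ thanks to the QSD identity of the previous step; the remainder $\int w_{s,x}(y)\,\P_y(T_{\partial D}>t)\,\mu(dy)$ must then be shown to be $o(e^{-\lambda_1(s+t)})$. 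The natural way is to rewrite it as $\langle P_{t-1}w_{s,x},P_1\BBone_D\rangle_\mu$ and combine the spectral-gap contraction $\|P_{t-1}w_{s,x}\|_{\L^2(\mu)}\leq e^{-\lambda_2(t-1)}\|w_{s,x}\|_{\L^2(\mu)}$ with the $\L^2\!\to\!\L^\infty$ smoothing effect of $P_1$ coming from Theorem~\ref{thmstructure}.

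Finally, for an arbitrary Borel $A\subset D$, I split $A=(A\cap D_n)\cup(A\cap D_n^c)$: Proposition~\ref{corspectre} handles the compact part, while the tail is dominated by $e^{\lambda_1 t}\bigl[\P_x(T_{\partial D}>t)-\P_x(X_t\in D_n,T_{\partial D}>t)\bigr]$, which by the previous step tends to $\eta_1(x)\int_{D_n^c}\eta_1\,d\mu$ and can be made arbitrarily small because $\eta_1\in\L^1(\mu)$. Combining these estimates yields $\lim_t e^{\lambda_1 t}\P_x(X_t\in A,T_{\partial D}>t)=\eta_1(x)\int_A\eta_1\,d\mu$, and dividing by the same limit with $A=D$ gives the Yaglom convergence. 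The main obstacle is the upper bound in the long-time equivalent: because $\mu$ often has infinite total mass (in the SLVKP setting $e^{-2V}$ is singular along the axes), $\BBone_D\notin\L^2(\mu)$ and Proposition~\ref{corspectre} cannot be invoked directly with $A=D$; pairing the $\L^2$-spectral tail $P_{t-1}w_{s,x}$ against the non-$\L^2$ observable $\BBone_D$ is therefore the delicate point, and it depends on a smoothing/ultracontractivity-type estimate that goes beyond bare spectral theory.
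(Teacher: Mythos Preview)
Your argument for the QSD property and the lower bound via compact exhaustion are correct and follow the one-dimensional template. The splitting $A=(A\cap D_n)\cup(A\cap D_n^c)$ in the last step is also the right way to pass from $A=D$ to a general Borel set once the limit for $A=D$ is secured.

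The gap is precisely where you locate it, but the tool you invoke does not close it. Theorem~\ref{thmstructure} only says that $r(1,x,\cdot)\in\L^2(\mu)$ with $\|r(1,x,\cdot)\|_{\L^2(\mu)}^2\le (2\pi)^{-d/2}e^{C}e^{2V(x)}$; this is a pointwise-in-$x$ estimate, not a uniform $\L^2\to\L^\infty$ bound for $P_1$ (nor, by duality, a uniform $\L^1\to\L^2$ bound), because $V$ is unbounded above on $D$ in all the situations of interest. What your pairing $\langle P_{t-1}w_{s,x},\,P_1\BBone_D\rangle_\mu$ actually needs is $P_1\BBone_D\in\L^2(\mu)$, i.e.\ $P_1:\L^\infty(\mu)\to\L^2(\mu)$ bounded; in the paper this is Lemma~\ref{lemultra}, which requires the extra integrability condition~\eqref{eql1a} and is \emph{not} a consequence of (H) together with $\eta_1\in\L^1(\mu)$. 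So under the sole hypotheses of Theorem~\ref{thmyaglom} your route to the upper bound is blocked.

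The paper (following \cite{Cat07}; see Remark~\ref{remyaglomlemma}) bypasses the $\L^2$ obstacle by a pointwise kernel estimate: for all $t>1$ and all $x,y\in D$,
$$r(t,x,y)\ \le\ C(x)\,e^{-\lambda_1 t}\,\eta_1(y),$$
obtained from the parabolic Harnack inequality for $L$, which is available since $V\in C^2(D)$. Integrating in $y$ over any Borel $A\subset D$ gives immediately
$$\P_x(X_t\in A,\,T_{\partial D}>t)=\int_A r(t,x,y)\,\mu(dy)\ \le\ C(x)\,e^{-\lambda_1 t}\int_A\eta_1\,d\mu,$$
valid in particular for $A=D$, using only $\eta_1\in\L^1(\mu)$. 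Together with your lower bound this yields~\eqref{limit} and then the general statement, with no $\L^2$ control of $\BBone_A$ whatsoever. The missing ingredient is therefore not an operator-theoretic smoothing property of $P_t$ but a pointwise comparison of the killed transition density with the ground state, and Harnack's inequality is the correct device.
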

\begin{remark}\label{remyaglomlemma}
The proof of Theorem 5.2 in \cite{Cat07} lies on the following
estimate $$r(t,x,y) \leq C(x) \, e^{- \lambda_1 t} \, \eta_1(y)$$
for all $x,y$ in $D$ ($(0,+\infty)$ in \cite{Cat07}), $t>1$ and
some function $C(x)$. This result is still true here and the proof
based on the Harnack's inequality is similar. \hfill
$\diamondsuit$
\end{remark}
\bigskip

\subsection{\bf Ground state estimates.}

We wish now to give tractable conditions for $\eta_1$ to be in
$\L^1(\mu)$. Of course if $\mu$ is bounded there is nothing to do
since $\eta_1 \in \L^2(\mu)$, so that this subsection is only
interesting for unbounded $\mu$. For simplicity of notation we
assume that the origin $0 \in \bar D^c$ so that if $x\in D$,
$|x|\geq \alpha >0$. The results of this subsection are adapted
from Section 4 in \cite{Cat07}.
\smallskip

Recall that $\eta_1 = e^V \, \psi_1$ where $\psi_1$ is the ground
state of $\tilde L$ (cf. \eqref{eqschro}), i.e. the (positive and
normalized) eigenfunction of $\tilde L$ associated to
$-\lambda_1$. So in order to get some estimates on $\eta_1$ it is
enough to get some estimates on $\psi_1$. Since $\psi_1 =
e^{\lambda_1} \, \tilde P_1(\psi_1)$ it is interesting to prove
contractivity properties for $\tilde P_1$.

Let us first  recall  the definition of ultracontractivity.

\begin{definition}\label{ultracontractivity}
A semi-group of contractions  $(Q_t)_{t\geq 0}$ is said to be
ultracontractive if $Q_{t}$ maps continuously $\mathbb{L}^2(\mu)$
in $\mathbb{L}^\infty(\mu)$ for any $t>0$.

Remark that by duality, and thanks to the symmetry of $\mu$, $Q_t$
also maps continuously $\mathbb{L}^1(\mu)$ to $\mathbb{L}^2(\mu)$.
\end{definition}

\begin{proposition}\label{schroding}
Assume that Hypothesis (H2) and \eqref{eqfk} are satisfied. Then
$\tilde P_t$ is ultracontractive. It follows that $\psi_1=\eta_1
e^{-V}$ is bounded.
\end{proposition}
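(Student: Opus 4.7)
The plan is to exploit the Feynman--Kac representation \eqref{eqfk} together with (H2) to dominate $\tilde P_t$ pointwise by a constant multiple of the free Brownian semi-group (restricted to paths that have not yet hit $\partial D$), and then invoke standard Gaussian heat kernel bounds to upgrade this to an $\L^2(dx)\to\L^\infty(dx)$ estimate.

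First I would use (H2). Since $G(y)=|\nabla V|^2(y)-\Delta V(y)\ge -C$ on $D$, the exponential weight appearing in \eqref{eqfk} satisfies
$$\exp\!\left(-\tfrac12\int_0^t(|\nabla V|^2-\Delta V)(\omega_s)\,ds\right)\le e^{Ct/2}$$
$\W_x$-almost surely on $\{t<T_{\partial D}\}$. Plugging this into \eqref{eqfk} and forgetting the indicator $\BBone_{t<T_{\partial D}}$ in the upper bound (extending $f$ by $0$ outside $D$), I obtain for every $x\in D$
$$|\tilde P_t f(x)|\le e^{Ct/2}\int_{\R^d}p_t(x,y)\,|f(y)|\,dy,$$
where $p_t(x,y)=(2\pi t)^{-d/2}e^{-|x-y|^2/(2t)}$ is the Euclidean heat kernel.

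Next I would apply Cauchy--Schwarz to the right-hand side: since $\|p_t(x,\cdot)\|_{\L^2(\R^d)}=(4\pi t)^{-d/4}$, this yields
$$\|\tilde P_t f\|_{\L^\infty(D)}\le e^{Ct/2}(4\pi t)^{-d/4}\,\|f\|_{\L^2(D,dx)},$$
which is precisely the statement that $\tilde P_t$ is ultracontractive in the sense of Definition~\ref{ultracontractivity} (with respect to the Lebesgue reference measure, which is the reference measure making $\tilde L$ symmetric).

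Finally I would deduce the second assertion. The ground state $\psi_1$ lies in $\L^2(D,dx)$ (normalised) and satisfies $\psi_1=e^{\lambda_1}\tilde P_1\psi_1$ in $\L^2(D,dx)$. Standard elliptic regularity for the Schr\"odinger operator $\tilde L$ ensures that this identity holds pointwise in $D$. Combining it with the previous $\L^2\to\L^\infty$ bound applied at $t=1$ gives
$$\|\psi_1\|_{\L^\infty(D)}\le e^{\lambda_1}\,e^{C/2}\,(4\pi)^{-d/4}\,\|\psi_1\|_{\L^2(D,dx)}<+\infty,$$
so $\eta_1 e^{-V}=\psi_1$ is bounded, as claimed. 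No real obstacle is expected: the only non-trivial input is the uniform lower bound on $G$ furnished by (H2), which is exactly what allows the Feynman--Kac weight to be dominated by a deterministic constant; everything else is a direct consequence of Gaussian bounds.
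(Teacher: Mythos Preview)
Your proof is correct and follows essentially the same approach as the paper: both use (H2) to dominate the Feynman--Kac weight in \eqref{eqfk} by $e^{Ct/2}$, compare with the free (resp.\ Dirichlet) heat kernel, and then read off the $\L^2\to\L^\infty$ bound---the paper phrases this as a pointwise bound $\tilde p_t(x,y)\le e^{Ct/2}p_t^D(x,y)\le e^{Ct/2}(2\pi t)^{-d/2}$ on the kernel, while you unwind the same estimate via Cauchy--Schwarz, which is a cosmetic difference. The conclusion for $\psi_1$ via $\psi_1=e^{\lambda_1}\tilde P_1\psi_1$ is identical in both.
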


\begin{proof}
We may compare the fundamental solution (kernel) of $\tilde P_t$
with the one of the Schr\"{o}dinger equation with constant
potential as in \cite{BS} by directly using the representation
\eqref{eqfk}.

Introduce the Dirichlet heat semi-group in $D$ i.e.
\begin{equation}\label{eqheat}
P_t^D f(x) = \E^{\W_x} \left[ f(\omega(t)) \,
\BBone_{t<T_{\partial D}}\right] = \int_D \, f(y) \, p^D_t(x,y) \,
dy \, .
\end{equation}
Hypothesis (H2) and \eqref{eqfk} immediately imply that
\begin{equation}\label{eqcontrolheat}
\tilde p_t(x,y) \, \leq \, e^{Ct/2} \, p_t^D(x,y) \, \leq \,
e^{Ct/2} \, (2\pi t)^{-d/2} \, e^{|x-y|^2/2t}
\end{equation}
where $\tilde p_t$ denotes the (symmetric) kernel of $\tilde P_t$
w.r.t. the Lebesgue measure.

\eqref{eqcontrolheat} shows that $\tilde P_t$ has a bounded kernel
for all $t>0$, and hence that $\tilde P_t$ is ultracontractive. In
particular $\psi_1$ is bounded $\psi_1 = e^{\lambda_1} \, since
\tilde P_1(\psi_1)$, hence $\eta_1 \, e^{-V}$ is bounded. More
generally any eigenfunction $\psi_k$ is bounded.
\end{proof}
\smallskip


From the previous proposition, we deduce that $\eta_1\in
\mathbb{L}^1(\mu)$ as soon as $\int e^{-V(x)}dx<+\infty$. One can
improve this result. Recall that $p^D_1$ denotes the Dirichlet
 heat kernel defined in \eqref{eqheat}. Notice that $\int_D \, p^D_1(x,y) \, dy = \W_x(T_{\partial D} > t)$
  goes to zero as $x$ tends to the boundary.

\begin{proposition}\label{propl11}
Assume that hypothesis (H) is fulfilled. Assume in addition that there exists some $R>0$ such that
the following  is satisfied
\begin{equation}\label{eql1a}
\int_{D\cap \{d(x,\partial D)>R\}} \, e^{-2V(x)} \, dx \, < \, +\infty  \textrm{ and }
\int_{D\cap \{d(x,\partial D)\leq
R\}} \, \left(\int_D \, p^D_1(x,y) \, dy\right) \, e^{-V(x)} \, dx \, < \, +\infty \, .
\end{equation}

\smallskip

Then $\eta_1 \in \L^1(\mu)$. More generally any eigenfunction $\eta_k \in \L^1(\mu)$.
\end{proposition}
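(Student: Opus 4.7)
The plan is to rewrite the condition $\eta_k \in \mathbb{L}^1(\mu)$ in terms of the Schr\"odinger ground/excited state $\psi_k = \eta_k e^{-V}$, and then split the integration domain according to the distance to $\partial D$, handling the two regions with quite different tools.

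First I would observe that
$$\int_D \eta_k(x) \, d\mu(x) = \int_D \psi_k(x) \, e^{-V(x)} \, dx,$$
so the question reduces to showing that $\psi_k \, e^{-V}$ is integrable on $D$. I then split $D$ into the ``bulk'' $D_R^+ = D \cap \{d(x,\partial D) > R\}$ and the ``boundary layer'' $D_R^- = D \cap \{d(x,\partial D) \leq R\}$.

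On $D_R^+$ I would use Cauchy--Schwarz together with the fact that $\psi_k \in \mathbb{L}^2(dx)$ (since it is an eigenfunction of the self-adjoint operator $\tilde L$ on $\mathbb{L}^2(dx)$, normalized in that space):
$$\int_{D_R^+} \psi_k(x)\, e^{-V(x)}\, dx \; \leq \; \|\psi_k\|_{\mathbb{L}^2(dx)} \left(\int_{D_R^+} e^{-2V(x)}\, dx\right)^{1/2},$$
which is finite by the first part of \eqref{eql1a}.

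On $D_R^-$ the key point is the pointwise estimate for $\psi_k$ coming from Proposition \ref{schroding}: writing $\psi_k = e^{\lambda_k} \tilde P_1 \psi_k$ and using the heat kernel comparison \eqref{eqcontrolheat} (which uses (H2)) together with the boundedness $\|\psi_k\|_\infty < +\infty$ from Proposition \ref{schroding}, I obtain
$$|\psi_k(x)| \;\leq\; e^{\lambda_k}\int_D \tilde p_1(x,y)\,|\psi_k(y)|\,dy \;\leq\; e^{\lambda_k + C/2}\,\|\psi_k\|_\infty \int_D p_1^D(x,y)\,dy,$$
and the last integral is exactly $\mathbb{W}_x(T_{\partial D} > 1)$, which decays to zero as $x$ approaches $\partial D$. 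Multiplying by $e^{-V(x)}$ and integrating over $D_R^-$ yields finiteness by the second part of \eqref{eql1a}. Combining the two regions gives $\eta_k \in \mathbb{L}^1(\mu)$, and the same argument works identically for every eigenfunction $\eta_k$, since the only facts about $\psi_k$ used are its $\mathbb{L}^2(dx)$-normalization and its $\mathbb{L}^\infty$ bound, both furnished by Proposition \ref{schroding}.

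The only delicate step is the pointwise majorization near $\partial D$: one has to make sure that the kernel comparison \eqref{eqcontrolheat} from Proposition \ref{schroding} really transfers the Dirichlet-boundary decay of $p_1^D$ to $\psi_k$, and that the constant $e^{\lambda_k + C/2}\|\psi_k\|_\infty$ does not interact badly with the second integrability assumption. Once that pointwise bound is granted, the rest is an essentially direct split-and-Cauchy--Schwarz argument, and the proof follows the same pattern as Proposition 4.2 of \cite{Cat07}.
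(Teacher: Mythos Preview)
Your proof is correct and matches the paper's argument essentially step for step: the same splitting into $\{d(x,\partial D)>R\}$ and $\{d(x,\partial D)\leq R\}$, Cauchy--Schwarz on the bulk using the $\mathbb{L}^2$ normalization, and on the boundary layer the pointwise bound $|\psi_k(x)| \leq e^{\lambda_k+C/2}\|\psi_k\|_\infty \int_D p_1^D(x,y)\,dy$ obtained from $\psi_k = e^{\lambda_k}\tilde P_1 \psi_k$ and the kernel comparison \eqref{eqcontrolheat}. The only cosmetic difference is that the paper applies Cauchy--Schwarz directly to $\eta_k$ in $\mathbb{L}^2(\mu)$ rather than to $\psi_k$ in $\mathbb{L}^2(dx)$, which is the same computation.
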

\begin{proof}
Since $\eta_k$ is normalized, $$\int_{D\cap \{d(x,\partial D)\}} \, \eta_k(x) \, d\mu \leq \left(\int_{D\cap
\{d(x,\partial D)\}} \, e^{-2V(x)} \, dx\right)^{1/2} < +\infty \, .$$ Now
\begin{eqnarray*}
\int_{D\cap \{d(x,\partial D)\leq R\}} \, \eta_k(x) \, d\mu & = & \int_{D\cap \{d(x,\partial D)\leq R\}} \,
\psi_k(x) \,
e^{-V(x)} \, dx \\ & = & \int_{D\cap \{d(x,\partial D)\leq R\}} \, \left(\int_D \, e^{\lambda_k} \, \psi_k(y)
\, \tilde p_1(x,y) \, dy\right) \, e^{-V(x)} \, dx
\\& \leq & e^{C/2} \, e^{\lambda_k} \, \parallel \psi_k\parallel_\infty \,
\int_{D\cap \{d(x,\partial D)\leq R\}} \,
\left(\int_D \, p^D_1(x,y) \, dy\right) \, e^{-V(x)}
\, dx
\end{eqnarray*}
and the result follows.
\end{proof}
\smallskip

\begin{remark}\label{remtemps}
 Remark that we can replace $p_1^D$ by any $p_s^D$ with $s>0$ in the previous proof.
 \end{remark}

\subsection{\bf Rate of convergence.}

According to the spectral representation we may decompose each
function in $\L^2$. We introduce some notation.
\begin{definition}\label{defspectral}
We denote by $E_2$ the eigenspace associated with $\lambda_2$. We
know that $dim (E_2) = n_2 < +\infty$ and we may choose an
orthonormal basis of $E_2$, $(\eta_{2,1},...,\eta_{2,n_2})$. We
denote by
$pr^\bot$ the orthogonal projection onto the orthogonal of $\R
\eta_1 \oplus E_2$.
\end{definition}

We thus have that
\begin{equation}\label{eqspectrer}
P_{t-1} r(1,x,.) = e^{- \lambda_1 t} \eta_1(x) \, \eta_1 +
e^{-\lambda_2 t} \, \sum_{i=1}^{n_2} \eta_{2,i}(x) \, \eta_{2,i} +
e^{- \lambda_3 (t-1)} \, h(t,x,.)
\end{equation}
where $h(t,x,.)$ is orthogonal to $\R \eta_1 \oplus E_2$ and such that $\parallel
h(t,x,.)\parallel_{\L^2(\mu)} \leq \parallel pr^\bot r(1,x,.)\parallel_{\L^2(\mu)}$. Hence (recall
\eqref{eql2bis}) if $\BBone_A \in \L^2(\mu)$,
\begin{eqnarray}\label{eqrate1}
\P_x(X_t \in A \, , \, T_{\partial D}>t) & = & e^{-\lambda_1 t}
\langle \BBone_A, \eta_1 \rangle \, \eta_1(x) + e^{-\lambda_2 t}
\, \sum_{i=1}^{n_2} \eta_{2,i}(x) \, \langle \BBone_A,
\eta_{2,i}\rangle \\ & + & e^{- \lambda_3 (t-1)} \, \langle
h(t,x,.),\BBone_A \rangle \, . \nonumber
\end{eqnarray}
If we could replace $A$ by $D$ we would obtain an expansion of the
conditional probability $\P_x(X_t \in A \, | \, T_{\partial
D}>t)$. But actually we have
\begin{lemma}\label{lemultra}
If (H) and \eqref{eql1a} are satisfied, $P_1$ is a bounded operator from $\L^\infty(\mu)$ to
$\L^2(\mu)$.
\end{lemma}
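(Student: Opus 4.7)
The plan is to reduce the $\L^\infty(\mu) \to \L^2(\mu)$ boundedness of $P_1$ to an $\L^2(dx)$ estimate for the Dirichlet heat semi-group $P_t^D$ on $D$, for which both parts of \eqref{eql1a} can be applied directly. Since for every $f\in\L^\infty(\mu)$ one has $|P_1 f(x)| \leq \|f\|_{\L^\infty(\mu)}\, P_1\BBone(x)$, the claim is equivalent to $P_1\BBone \in \L^2(\mu)$.

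To control $P_1\BBone$, I apply Theorem \ref{thmstructure} with $F\equiv 1$ and use hypothesis (H2) to majorize the Feynman--Kac exponential by $e^{C/2}$, obtaining
$$P_1\BBone(x) \;\leq\; e^{C/2}\, e^{V(x)} \int_D p_1^D(x,y)\, e^{-V(y)}\,dy \;=\; e^{C/2}\, e^{V(x)}\, P_1^D(e^{-V})(x).$$
Squaring and integrating against $d\mu = e^{-2V}dx$ cancels $e^{2V(x)}$ and leaves $\|P_1\BBone\|_{\L^2(\mu)}^2 \leq e^C\, \|P_1^D(e^{-V})\|_{\L^2(dx)}^2$. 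Hence it is enough to prove $P_1^D(e^{-V}) \in \L^2(dx)$.

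For this I split $e^{-V} = e^{-V}\BBone_{\{d(\cdot,\partial D) > R\}} + e^{-V}\BBone_{\{d(\cdot,\partial D) \leq R\}}$. The first condition in \eqref{eql1a} reads exactly $e^{-V}\BBone_{\{d>R\}} \in \L^2(dx)$, and since $P_1^D$ is an $\L^2(dx)$-contraction (from $p_1^D \leq p_1^{\text{free}}$), the far contribution lies in $\L^2(dx)$. For the near-boundary contribution, a Fubini computation using the symmetry of $p_1^D$ reinterprets the second condition in \eqref{eql1a} as the statement $P_1^D(e^{-V}\BBone_{\{d\leq R\}}) \in \L^1(dx)$. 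By Remark \ref{remtemps} the same $\L^1$-estimate holds with $p_{1/2}^D$ in place of $p_1^D$, so $P_{1/2}^D(e^{-V}\BBone_{\{d\leq R\}}) \in \L^1(dx)$; combining the semi-group identity $P_1^D = P_{1/2}^D \circ P_{1/2}^D$ with the $\L^1(dx) \to \L^2(dx)$ continuity of $P_{1/2}^D$ (dual, by symmetry of the kernel, to the ultracontractive bound $P_{1/2}^D: \L^2(dx) \to \L^\infty(dx)$ that follows from $p_{1/2}^D(x,y) \leq \pi^{-d/2}$) produces $P_1^D(e^{-V}\BBone_{\{d\leq R\}}) \in \L^2(dx)$. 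Adding the two summands closes the argument.

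The hard step will be precisely this last one: upgrading the $\L^1$-integrability supplied by \eqref{eql1a}(b) into the $\L^2$-integrability demanded by the Girsanov bound. It succeeds only because the Dirichlet heat semi-group on $D$ is strongly ultracontractive (uniform Gaussian kernel bound) and because Remark \ref{remtemps} allows an extra half-time smoothing step to be slipped into the estimate; every other step is bookkeeping.
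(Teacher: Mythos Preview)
Your proof is correct and shares the paper's overall architecture: pass from $P_1$ to a heat-type semigroup via the Girsanov formula of Theorem~\ref{thmstructure}, split into near-boundary and far-from-boundary pieces matching the two conditions in~\eqref{eql1a}, and upgrade the $\L^1$ control supplied by the second condition in~\eqref{eql1a} to $\L^2$ using ultracontractivity together with the half-time trick licensed by Remark~\ref{remtemps}. The execution differs in one respect worth noting. The paper works with the Schr\"odinger semigroup $\tilde P_t$ and splits the \emph{output} $\tilde P_1(e^{-V}g)$ according to the location of $x$; this forces it to invoke the refined kernel estimate of Lemma~\ref{lembound} to handle the cross term (near-boundary $x$, far-from-boundary $y$). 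You instead bound $P_1\BBone$ by $e^{C/2}\,e^{V}\,P_1^D(e^{-V})$ at the outset and split the \emph{input} $e^{-V}$; the $\L^2(dx)$-contractivity of $P_1^D$ then disposes of the far piece immediately, so Lemma~\ref{lembound} is never needed and only the crude bound $\tilde p_1 \leq e^{C/2}\,p_1^D$ of~\eqref{eqcontrolheat} enters. Your route is thus a mild but genuine simplification of the paper's argument.
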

Assume firstly the Lemma \ref{lemultra}. Then we may write
$$\P_x(T_{\partial D}>t) = P_t(\BBone_D)(x) =
P_{t-1}(P_1(\BBone_D))(x)$$ with $P_1(\BBone_D) \in \L^2(\mu)$.
Note that $$\langle P_1(\BBone_D),\eta_k\rangle = \int_D \,
e^{-\lambda_k} \, \eta_k \, d\mu$$ since \eqref{eql1a} implies
that each eigenfunction $\eta_k$ is in $\L^1(\mu)$. We thus deduce
the

\begin{proposition}\label{proprate}
If (H) and \eqref{eql1a} are satisfied then for all $x \in D$ and all measurable subset $A \subset
D$ it holds $$\lim_{t \to +\infty} \, e^{(\lambda_2 - \lambda_1)t} \, \left(\P_x(X_t \in A \, | \,
T_{\partial D}>t) - \nu_1(A)\right) \, = \, \qquad \qquad \qquad $$
$$ \qquad \qquad \qquad \frac{\sum_{i=1}^{n_2} \eta_{2,i}(x) \, \left(\langle \BBone_A,
\eta_{2,i}\rangle \langle \BBone_D, \eta_1\rangle - \langle \BBone_D, \eta_{2,i}\rangle \langle
\BBone_A, \eta_1\rangle\right)}{\eta_1(x) \, (\langle \BBone_D, \eta_1\rangle)^2} \, .$$
\end{proposition}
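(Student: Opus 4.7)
The plan is to obtain a pointwise two–term asymptotic expansion for both the numerator $P_t\BBone_A(x) = \P_x(X_t\in A,\, T_{\partial D}>t)$ and the denominator $P_t\BBone_D(x) = \P_x(T_{\partial D}>t)$ of the conditional probability, with a remainder of order $e^{-\lambda_3 t}$, and then to Taylor-expand the ratio. Since $\lambda_1 < \lambda_2 < \lambda_3$, the first two exponential scales survive in the difference $\P_x(X_t\in A\mid T_{\partial D}>t)-\nu_1(A)$ after multiplying by $e^{(\lambda_2-\lambda_1)t}$, and the claimed formula should pop out.

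The first step is to produce the spectral expansion at a point. The obstacle is that $\BBone_A$ (and $\BBone_D$) need not lie in $\L^2(\mu)$ when $\mu$ has infinite mass, so one cannot directly decompose them in the spectral basis. I use the factorisation $P_t = P_1\circ P_{t-2}\circ P_1$. The \emph{inner} $P_1$, by Lemma \ref{lemultra}, maps $\BBone_A\in \L^\infty(\mu)$ into $\L^2(\mu)$, and self-adjointness gives $\langle P_1\BBone_A,\eta_k\rangle_\mu = e^{-\lambda_k}\langle \BBone_A,\eta_k\rangle_\mu$, with $\langle \BBone_A,\eta_k\rangle_\mu$ finite since $\eta_k\in \L^1(\mu)$ under \eqref{eql1a}. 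The middle $P_{t-2}$ contracts the component orthogonal to $\R\eta_1\oplus E_2$ in $\L^2(\mu)$ at rate $e^{-\lambda_3(t-2)}$. The \emph{outer} $P_1$ finally allows pointwise evaluation at $x$ by Cauchy--Schwarz against $r(1,x,\cdot)\in \L^2(\mu)$ (finite thanks to Theorem \ref{thmstructure} and hypothesis (H2)). Collecting the three steps yields
\begin{equation*}
P_t\BBone_A(x) \;=\; e^{-\lambda_1 t}\,\langle \BBone_A,\eta_1\rangle\,\eta_1(x) \;+\; e^{-\lambda_2 t}\sum_{i=1}^{n_2}\langle\BBone_A,\eta_{2,i}\rangle\,\eta_{2,i}(x)\;+\;R_A(t,x),
\end{equation*}
with $|R_A(t,x)|\le C(x)\,e^{-\lambda_3 t}\,\|P_1\BBone_A\|_{\L^2(\mu)}$, and similarly for $\BBone_D$ with coefficients $\langle \BBone_D,\eta_1\rangle$ and $\langle\BBone_D,\eta_{2,i}\rangle$.

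The second step is elementary asymptotic algebra. Factor $e^{-\lambda_1 t}$ out of numerator and denominator. Writing $a_1=\langle\BBone_A,\eta_1\rangle$, $b_1=\langle\BBone_D,\eta_1\rangle$, $\alpha(x)=\sum_i\langle\BBone_A,\eta_{2,i}\rangle\eta_{2,i}(x)$, $\beta(x)=\sum_i\langle\BBone_D,\eta_{2,i}\rangle\eta_{2,i}(x)$, and setting $u_t=e^{-(\lambda_2-\lambda_1)t}$, the ratio becomes
\begin{equation*}
\frac{a_1\eta_1(x)+u_t\,\alpha(x)+O(e^{-(\lambda_3-\lambda_1)t})}{b_1\eta_1(x)+u_t\,\beta(x)+O(e^{-(\lambda_3-\lambda_1)t})}\;=\;\frac{a_1}{b_1}\;+\;u_t\,\frac{b_1\alpha(x)-a_1\beta(x)}{b_1^{\,2}\,\eta_1(x)}\;+\;o(u_t),
\end{equation*}
where the $o(u_t)$ uses $\lambda_3-\lambda_1>\lambda_2-\lambda_1$ together with $u_t^{\,2}=o(u_t)$. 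One now recognises $a_1/b_1=\nu_1(A)$ (by Theorem \ref{thmyaglom}), multiplies through by $e^{(\lambda_2-\lambda_1)t}$ and takes the limit.

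The genuine obstacle is the \emph{pointwise} remainder bound; an $\L^2$ estimate on $P_{t-1}$ against $pr^\bot(P_1\BBone_A)$ does not suffice, and one really needs the extra composition with an outer $P_1$ to trade the $\L^2$ bound for a pointwise one through $\|r(1,x,\cdot)\|_{\L^2(\mu)}$. The rest is safe: positivity of $\eta_1(x)$ (guaranteed by Theorem \ref{thmspectre}) prevents a vacuous denominator, and the integrability $\eta_k\in\L^1(\mu)$ from Proposition \ref{propl11} legitimises every pairing $\langle \BBone_A,\eta_k\rangle$ and $\langle\BBone_D,\eta_k\rangle$ appearing in the final expression.
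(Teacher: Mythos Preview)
Your proof is correct and follows essentially the same approach as the paper: the paper writes $\P_x(T_{\partial D}>t)=P_{t-1}(P_1\BBone_D)(x)$ with $P_1\BBone_D\in\L^2(\mu)$ by Lemma~\ref{lemultra}, pairs against $r(1,x,\cdot)\in\L^2(\mu)$ via \eqref{eql2bis}, and applies the spectral expansion \eqref{eqspectrer}---exactly your factorisation $P_t=P_1\circ P_{t-2}\circ P_1$ with inner $P_1$ mapping $\L^\infty\to\L^2$ and outer $P_1$ giving pointwise control. Your write-up is in fact more explicit than the paper's, which simply says ``we thus deduce'' and omits the elementary Taylor expansion of the ratio that you carry out.
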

\bigskip

It remains to prove Lemma \ref{lemultra}. To this end let us first
state an upperbound for $\tilde{p}_t$.

\begin{lemma}\label{lembound}
If hypothesis (H) is fulfilled, there exist a constant $M$ and a
non-negative function $B$ satisfying $\lim_{u \to +\infty} B(u) =
+\infty$ such that for any $x,y$ in $D$, $$0<\tilde p_1 (x,y) \leq
M \, e^{-|x-y|^2/4} \, e^{-B(|x|\vee |y|)} \, .$$
\end{lemma}

\begin{proof}
We can obtain an upper bound for $\tilde p_t$, when hypothesis (H)
is fulfilled. To this end, for a non-negative $f$ but this time
$\varepsilon= |x|/2$  we write
\begin{eqnarray*}
\int_D \, f(y) \, \tilde p_t(x,y) \, dy & = & \E^{\W_x} \left[
f(\omega(t)) \, \BBone_{t<\tau_x(\varepsilon)} \,
\BBone_{t<T_{\partial D}} \, \exp\left( - \, \frac 12 \, \int_0^t
\, (|\nabla V|^2 - \Delta V)(\omega_s) ds\right)\right] \\ & + &
\E^{\W_x} \left[ f(\omega(t)) \, \BBone_{T_{\partial D}> t \geq
\tau_x(\varepsilon)} \, \exp\left( - \, \frac 12 \, \int_0^t \,
(|\nabla V|^2 - \Delta V)(\omega_s) ds\right)\right] \\ & \leq &
e^{- t \, \bar G(|x|/2)/2} \, \E^{\W_x} \left[ f(\omega(t)) \,
\BBone_{t<T_{\partial D})}\right] + e^{Ct/2} \, \E^{\W_x} \left[
f(\omega(t)) \, \BBone_{T_{\partial D} > t \geq
\tau_x(\varepsilon)}\right]
\end{eqnarray*}
The first term in the sum above is less than $$e^{- t \, \bar
G(|x|/2)/2} \, \int f(y) \, p_t^D(x,y) \, dy \, .$$ For the second
term we shall assume that the support of $f$ is included in the
ball $B(x,\varepsilon/2)$. Recall that for a brownian motion
starting at $x$, the exit distribution from $B(x,\varepsilon)$ is
uniform on the sphere $S(x,\varepsilon)$. Hence
\begin{eqnarray*}
\E^{\W_x} \left[ f(\omega(t)) \, \BBone_{T_{\partial D} > t \geq
\tau_x(\varepsilon)}\right] & \leq & \E^{\W_x} \left[ f(\omega(t))
\, \BBone_{t \geq \tau_x(\varepsilon)}\right]\\ & \leq & \int f(y)
\,  \E^{\W_x}\left[ \BBone_{t \geq \tau_x(\varepsilon)} \,
\left(\int_{S(x,\varepsilon)} \gamma_{t -
\tau_x(\varepsilon)}(z,y)  d_Sz\right) \right] \, dy
\end{eqnarray*}
where $\gamma$ is the ordinary heat kernel. Since
$|z-y|>\varepsilon/2$ in the above formula,
\begin{eqnarray*}
\gamma_u(z,y) & = & (2\pi u)^{-d/2} \, e^{|z-y|^2/2u} \\ & \leq &
(2\pi u)^{-d/2} \, e^{- \varepsilon^2/8u} \leq e^{-d/2} \,
\left(\frac{\pi \, \varepsilon^2}{2d}\right)^{-d/2} \, ,
\end{eqnarray*}
the latter inequality being obtained by an easy optimization in
$u$. Since $|x|\geq \alpha$ this quantity is bounded on $D$ by
some constant $B$. But $$\E^{\W_x}\left[ \BBone_{t \geq
\tau_x(\varepsilon)}\right] \leq K e^{- \varepsilon^2/8td}$$ for
some constant $K$ depending on $d$ only. So $$\E^{\W_x} \left[
f(\omega(t)) \, \BBone_{T_{\partial D} > t \geq
\tau_x(\varepsilon)}\right] \leq B \, K \, e^{- |x|^2/32td} \,
\int f(y) \, dy \, $$ and gathering all the previous results we
obtain
\begin{equation}\label{eqboundproche}
\textrm{ if } |x-y|\leq |x|/4 \, ; \, \tilde p_t(x,y) \, \leq  \,
M \, \left((2\pi t)^{-d/2} \, e^{- t \, \bar G(|x|/2)/2} \, + \,
e^{- |x|^2/32td}\right) \, ,
\end{equation}
for some constant $M$. If $|x-y|\geq |x|/4$, \eqref{eqcontrolheat}
furnishes $$\tilde p_t(x,y) \leq \,  e^{Ct/2} \, (2\pi t)^{-d/2}
\, e^{-|x|^2/32t} \, .$$ Define
\begin{equation}\label{eqspeed}
B(u) = \frac 12 \, \min \, (\bar G(u/2)/2 \, , \, (u^2/32 d)) \, .
\end{equation}
We have shown that there exists some constant $M$ such that
$\tilde p_1(x,y) \, \leq \, M \, e^{-2B(|x|)}$, but since $\tilde
p_1$ is symmetric the same holds replacing $|x|$ by $|y|$ and
finally $|x|$ by $\max (|x|,|y|)$. Taking the geometric average of
this estimate and \eqref{eqcontrolheat} ends the proof.
\end{proof}

\begin{proof} of Lemma \ref{lemultra}: let us consider a bounded
function $\|g\|_\infty\leq 1$. Then $P_1 g(x) = e^{V(x)} \, \tilde
P_1(e^{-V} g)(x)$. According to \eqref{eql1a} and lemma
\ref{lembound},
$$\int_{D \cap \{d(x,\partial D)\leq R\}} |\tilde P_1(e^{-V}
g)(x)| dx  \, \leq
$$
\begin{eqnarray*}
 & \leq &
\int_ {D \cap \{d(x,\partial D)\leq
R\}}\left(\int_{D\cap \{d(y,\partial D)>R\}} e^{-V(y)} \tilde
p_1(x,y) dy\right)dx \\& &  + \, M \, \int_{D \cap \{d(x,\partial D)\leq R\}}
\left(\int_{D\cap \{d(y,\partial D)\leq R\}} e^{-V(y)}
 p^D_1(x,y) dy\right) dx \\
& \leq & \, M \, \left(\int_{D\cap \{d(y,\partial D)>R\}} e^{-2V(y)} dy\right)^{1/2} \, \times
\\ & & \, \left(\int_{D\cap
\{d(y,\partial D)>R\}} \left(\int_{D \cap \{d(x,\partial D)\leq R\}} \,
 e^{-|x-y|^2/4} \, e^{-B(|x|\vee |y|)} \, dx\right)^2
\, dy\right)^{1/2}\\
& & \, + \, M \, \int_{D\cap \{d(y,\partial D)\leq R\}} e^{-V(y)} \left( \int_D  p_1^D(x,y) \, dx\right)dy
\end{eqnarray*}
is finite (recall that $p_1^D(x,y)=p_1^D(y,x)$). Hence
$\BBone_{d(x,\partial D)\leq R} \, \tilde P_1(e^{-V} g) \in
L^1(dx)$. Since $\tilde P_t$ is ultracontractive, $\tilde P_1$ is
a bounded map from $\L^1(dx)$ to $\L^2(dx)$. It follows that
$\tilde P_1(\BBone_{d(x,\partial D)\leq R} \, \tilde P_1(e^{-V}
g)) \in L^2(dx)$.

In addition $$ \int_{D \cap \{d(x,\partial D)> R\}} |\tilde P_1(e^{-V} g)(x)|^2 dx \leq
\int_{D \cap \{d(x,\partial D)>
R\}} e^{-2V(x)} dx < +\infty \, ,$$ i.e. $\BBone_{d(x,\partial D)> R} \, \tilde P_1(e^{-V} g) \in L^2(dx)$ so
that $\tilde P_1(\BBone_{d(x,\partial D)> R} \, \tilde P_1(e^{-V} g)) \in L^2(dx)$.

Summing up yields that $\tilde P_2(e^{-V} g) \in \L^2(dx)$.

Actually we may replace $1$ by $s>0$ as remarked in remark \ref{remtemps}, thus replace $2$ by $1$
in the previous result.
\end{proof}

\subsection{\bf Uniqueness of the quasi-stationary distribution.}

In \cite{Cat07} Theorem 7.2 we derived a necessary and sufficient
condition for $\nu_1$ to be the only quasi-limiting distribution,
i.e. to satisfy $\lim_{t \to +\infty} \P_{\nu}(X_t \in A \, | \,
T_{\partial D}> t) = \nu_1(A)$ for all initial distribution $\nu$.
In that case   $\nu_1$ is the only quasi-stationary distribution.
(Recall that in \cite{Cat07}, $D=\R^+$). This condition is very
close to the ultracontractivity of  the semi-group $P_t$.

We shall not try here to obtain such a criterion, but only a
sufficient condition based on the previous remark.

\begin{proposition}\label{propultraunique}
Assume that (H) and \eqref{eql1a} are satisfied. If $P_t$ is an ultracontractive semi-group, for
all initial distribution $\nu$ and all Borel subset $A\subset D$
$$\lim_{t \to +\infty} \P_{\nu}(X_t \in A \, | \, T_{\partial D}> t) = \nu_1(A) \, .$$
In particular $\nu_1$ is the unique quasi-stationary distribution.
\end{proposition}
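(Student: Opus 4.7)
The plan is to leverage ultracontractivity to upgrade the $\L^2(\mu)$ spectral expansion of $P_t \BBone_A$ into an $\L^\infty(\mu)$ expansion with exponential rate, and then integrate against an arbitrary probability measure $\nu$. The convergence in $\L^\infty$ is exactly what allows us to replace $\P_x$ by $\P_\nu$ for \emph{any} starting law, which is the whole point of the statement.

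First I would fix a Borel set $A \subset D$. By Lemma \ref{lemultra} we have $P_1 \BBone_A \in \L^2(\mu)$, so for $t \geq 1$ the spectral theorem (Theorem \ref{thmspectre}) and Proposition \ref{propl11} give the $\L^2(\mu)$-convergent expansion
\begin{equation*}
P_t \BBone_A \, = \, \sum_{k} e^{-\lambda_k t} \, \langle \BBone_A, \eta_k\rangle \, \eta_k \, .
\end{equation*}
Now I would pick some $s \in (0,1)$. By ultracontractivity, $P_s : \L^2(\mu) \to \L^\infty(\mu)$ is bounded, so applying $P_s$ to the partial sums above yields an $\L^\infty(\mu)$-convergent series for $P_{t+s}\BBone_A$. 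Since $\eta_k = e^{\lambda_k s} P_s \eta_k$, each $\eta_k$ belongs to $\L^\infty(\mu)$, and one obtains the uniform estimate
\begin{equation*}
\bigl\| e^{\lambda_1 t} P_t \BBone_A \, - \, \langle \BBone_A, \eta_1\rangle \, \eta_1 \bigr\|_{\L^\infty(\mu)} \, \leq \, C_A \, e^{-(\lambda_2 - \lambda_1)(t-s)}
\end{equation*}
for $t$ large enough. The same estimate holds with $A$ replaced by $D$, giving the denominator.

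Second, for an arbitrary probability measure $\nu$ on $D$, since the convergence is $\L^\infty(\mu)$-uniform I can integrate against $\nu$ (using the pointwise regularity of $P_t \BBone_A$ coming from Theorem \ref{thmstructure} and the smoothness of the eigenfunctions to lift the $\mu$-a.e.\ statement to an everywhere statement) and obtain
\begin{equation*}
e^{\lambda_1 t} \, \P_\nu(X_t \in A, \, T_{\partial D} > t) \, = \, \int_D e^{\lambda_1 t} P_t \BBone_A(x) \, \nu(dx) \, \longrightarrow \, \langle \BBone_A, \eta_1\rangle \, \int_D \eta_1 \, d\nu \, .
\end{equation*}
Because $\eta_1$ is everywhere strictly positive in $D$ (Theorem \ref{thmspectre}) and $\nu$ is a probability measure on $D$, the integral $\int_D \eta_1 \, d\nu$ is strictly positive. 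Taking the ratio with the case $A=D$ and using $\nu_1 = \eta_1 \, d\mu / \langle \BBone_D,\eta_1\rangle$,
\begin{equation*}
\P_\nu(X_t \in A \mid T_{\partial D} > t) \, \longrightarrow \, \frac{\langle \BBone_A, \eta_1\rangle}{\langle \BBone_D, \eta_1\rangle} \, = \, \nu_1(A) \, ,
\end{equation*}
which is the claimed convergence. For the last assertion, if $\nu'$ is any quasi-stationary distribution, then $\P_{\nu'}(X_t \in A \mid T_{\partial D} > t) = \nu'(A)$ for every $t$; letting $t \to +\infty$ in the identity just proved with $\nu = \nu'$ forces $\nu'(A) = \nu_1(A)$ for every Borel set $A$, so $\nu_1$ is the unique QSD.

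The main technical obstacle is step one: one needs to control the tail of the spectral series in $\L^\infty$, not just in $\L^2$, uniformly in $x \in D$, and to legitimately lift the $\mu$-a.e.\ equality to an everywhere equality (so that integrating against $\nu$ which may be singular with respect to $\mu$ still makes sense). Ultracontractivity does the first job by turning $\L^2$ convergence into $\L^\infty$ convergence after one extra semigroup application, and the parabolic regularity of $P_t$ (already used implicitly in Theorem \ref{thmyaglom}) takes care of the second. Once this step is in place, the rest is just dominated convergence and taking a ratio.
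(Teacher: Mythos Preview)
Your argument is correct, but it is the dual of the paper's proof. You push $\BBone_A$ forward by $P_t$, use Lemma \ref{lemultra} to land in $\L^2(\mu)$, apply the spectral theorem, and then invoke ultracontractivity in the direction $\L^2(\mu)\to\L^\infty(\mu)$ to obtain a uniform bound that survives integration against an arbitrary $\nu$. The paper instead works on the other slot: it introduces the density $r_\nu(1,\cdot)=\int r(1,x,\cdot)\,\nu(dx)\in\L^1(\mu)$, uses ultracontractivity in the dual direction $\L^1(\mu)\to\L^2(\mu)$ to obtain $P_1(r_\nu(1,\cdot))\in\L^2(\mu)$, and then pairs this against $P_1\BBone_A\in\L^2(\mu)$ (Lemma \ref{lemultra}) so that the spectral convergence takes place entirely inside $\L^2(\mu)$.

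The practical difference is the technical point you yourself flag: your route requires lifting an $\L^\infty(\mu)$ bound to a genuine pointwise bound before integrating against a possibly $\mu$-singular $\nu$, which you handle via the continuity of $x\mapsto P_t\BBone_A(x)$ and of $\eta_1$ together with the full support of $\mu$. The paper's route sidesteps this entirely, since the final pairing $\int_D P_1(\BBone_A)\,P_{t-2}(r_\nu(1,\cdot))\,d\mu$ is an honest $\L^2(\mu)$ inner product and no ``$\mu$-a.e.\ versus everywhere'' issue arises. On the other hand, your approach yields as a byproduct an explicit uniform rate $e^{-(\lambda_2-\lambda_1)t}$ in the spirit of Proposition \ref{proprate}, which the paper's argument does not directly display.
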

\begin{proof}
Since $P_t$ is ultracontractive, it turns out, as proved in
\cite{Dav} Theorem 1.4.1,  that $\L^1(\mu)\cap L^\infty(\mu)$
which is included into $\L^2(\mu)$, is invariant under $P_t$. So
$P_t$ extends as a contraction semi-group on all $\L^p(\mu)$.

Now, according to Theorem \ref{thmstructure} we know that
$\P_\nu(X_t \in A \, , \, T_{\partial D}> t) = \int_A \,
r_\nu(t,y) \mu(dy)$ with $r_\nu(t,.)=\int r(t,x,.) \nu(dx) \in
\L^1(\mu)$ and $r_\nu(t+s,y)= P_s (r_\nu(t,.))(y)$. Hence for
$t>2$,
\begin{eqnarray*}
\P_\nu(X_t \in A \, , \, T_{\partial D}
> t) & = & \int_D \, \BBone_A \, P_{t-1}(r_\nu(1,.)) \, d\mu \\ & = & \int_D \, P_1(\BBone_A) \,
P_{t-2}(r_\nu(1,.)) \, d\mu \, .
\end{eqnarray*}
But thanks to Lemma \ref{lemultra} and to ultracontractivity, both
$P_1(\BBone_A)$ and $P_{t-2}(r_\nu(1,.))$ are in $\L^2(\mu)$.
Furthermore $e^{\lambda_1(t-3)} \, P_{t-2}(r_\nu(1,.))$ converges
strongly in $\L^2$ to $\langle P_1(r_\nu(1,.)),\eta_1\rangle \,
\eta_1$ as $t \to +\infty$. Hence $$\lim_{t \to +\infty}
\P_\nu(X_t \in A \, | \, T_{\partial D}> t) = \frac{\langle
P_1(\BBone_A), \eta_1\rangle}{\langle P_1(\BBone_D),
\eta_1\rangle} =\nu_1(A) \, $$ since $\eta_1 \in \L^1(\mu)$.
\end{proof}

It remains to give tractable conditions for $P_t$ to be ultracontractive. To this end we shall use
the ideas introduced in \cite{KKR} and later developed in \cite{cat5} in particular. The next
lemma is the key

\begin{lemma}\label{lemkkr}{\bf (see \cite{KKR})}
Assume that (H) and \eqref{eql1a} are satisfied. If for all $t>0$
there exists $c(t)$ such that for all $x \in D$,
\begin{equation}\label{eqkkr}
e^{V(x)} \, \E^{\W_x}\left[\BBone_{T_{\partial D}>t} \, e^{- \frac 12 \, \int_0^t G(\omega_s)
ds}\right] \, \leq \, c(t) \, ,
\end{equation}
then $P_t$ is ultracontractive. (Recall that $G(y)=|\nabla
V|^2(y)-\triangle V(y)$).

Conversely this condition is necessary if we assume in addition that $\int_D e^{V(x)} \mu(dx) <
+\infty$.
\end{lemma}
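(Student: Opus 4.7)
The plan is to obtain ultracontractivity of $P_t$ by transferring ultracontractivity of $\tilde P_t$ (already proved in Proposition \ref{schroding} under (H2)) through the multiplicative intertwining $P_t g = e^{V} \tilde P_t(e^{-V} g)$ coming from \eqref{eqschro}. A preliminary observation is that \eqref{eqkkr} is nothing but a pointwise bound on $\tilde P_t \BBone_D$: by the Feynman--Kac formula \eqref{eqfk},
$$e^{V(x)}\tilde P_t \BBone_D (x) \, = \, e^{V(x)}\,\E^{\W_x}\!\left[\BBone_{t<T_{\partial D}} \, e^{-\tfrac12 \int_0^t G(\omega_s)ds}\right],$$
so \eqref{eqkkr} is equivalent to $\tilde P_t \BBone_D \leq c(t) e^{-V}$ on $D$, equivalently to $\|P_t(e^V)\|_\infty \leq c(t)$.

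For the sufficiency part, I would fix $g\in \L^2(\mu)$ with $\|g\|_{\L^2(\mu)}=1$ and set $h := e^{-V} g$, so $\|h\|_{\L^2(dx)}=1$. Splitting $\tilde P_t = \tilde P_{t/2}\tilde P_{t/2}$ I would proceed in three steps: (i) Proposition \ref{schroding} furnishes an ultracontractivity constant $M(t/2)$ such that $\|\tilde P_{t/2} h\|_\infty \leq M(t/2)$; (ii) positivity and monotonicity of $\tilde P_{t/2}$ yield, pointwise, $|\tilde P_{t/2}(\tilde P_{t/2} h)(x)| \leq M(t/2)\,\tilde P_{t/2}\BBone_D(x)$; (iii) the hypothesis \eqref{eqkkr} (at time $t/2$) bounds the last factor by $c(t/2)\,e^{-V(x)}$. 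Multiplying by $e^{V(x)}$ gives $|P_t g(x)|\leq M(t/2)\, c(t/2)$ uniformly in $x$, i.e.\ $P_t:\L^2(\mu)\to \L^\infty(\mu)$ is bounded, which is ultracontractivity.

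For the converse, assume $\int_D e^{-V}dx<+\infty$ and that $P_t$ is ultracontractive. The semi-group $P_t$ is symmetric on $\L^2(\mu)$, so by duality the $\L^2\to\L^\infty$ bound for $P_t$ dualizes to an $\L^1(\mu)\to\L^2(\mu)$ bound, whence $P_{2t}=P_t\circ P_t:\L^1(\mu)\to\L^\infty(\mu)$ is bounded. Now $\int e^V d\mu = \int e^{-V} dx < +\infty$, so $e^V\in \L^1(\mu)$, and applying the $\L^1\to\L^\infty$ bound gives $\|P_{2t}(e^V)\|_\infty<+\infty$. Using the intertwining once more, $P_{2t}(e^V)(x) = e^{V(x)}\tilde P_{2t}\BBone_D(x)$, so \eqref{eqkkr} holds at time $2t$; since $t>0$ is arbitrary it holds for all positive times.

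The main technical obstacle is not any single step individually but keeping straight the four different operators ($P_t$ on $\L^p(\mu)$, $\tilde P_t$ on $\L^p(dx)$, and their duals) together with the weight $e^{-V}$ that converts $\mu$ into Lebesgue measure; the argument rests on (a) the positivity/sub-Markov property of $\tilde P_t$ used in the sufficiency, (b) the symmetry of $P_t$ on $\L^2(\mu)$ used in the necessity, and (c) the identification of \eqref{eqkkr} with an $\L^\infty$ control on $P_t(e^V)$ established above. Once these are spelled out, the rest is routine semi-group bookkeeping.
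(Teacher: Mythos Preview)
Your proof is correct and follows essentially the same route as the paper's: split the semigroup at time $t/2$, use ultracontractivity of $\tilde P_{t/2}$ on the inner half, and then apply \eqref{eqkkr} to the outer half. The only cosmetic difference is that you invoke Proposition~\ref{schroding} as a black box for the $\L^2\to\L^\infty$ bound on $\tilde P_{t/2}$, whereas the paper rewrites this step via the Feynman--Kac representation and the Dirichlet heat kernel bound; for the converse you spell out the duality argument $P_{2t}=P_t\circ P_t:\L^1(\mu)\to\L^\infty(\mu)$ that the paper leaves implicit.
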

\begin{proof}
The proof is the same as in \cite{KKR}. It is given for the sake of completeness.

Recall that the heat semi-group on $D$ is ultracontractive, i.e.
for all non-negative $f \in \L^2(dx)$, $$\sup_{x \in D}
\E^{\W_x}\left[\BBone_{T_{\partial D}>t} \, f(\omega_t)\right]
\leq (\pi t)^{- \frac 14} \, \parallel f \parallel_{\L^2(dx)} \,
.$$ For a non-negative $g\in \L^2(d\mu)$, $f=e^{V}g \in \L^2(dx)$
so that using Theorem \ref{thmstructure}
\begin{eqnarray*}
P_t g(x) & = & e^{V(x)} \, \E^{\W_x}\left[\BBone_{T_{\partial D}>t} \, f(\omega_t) \, e^{- \frac
12 \, \int_0^t G(\omega_s) ds}\right]\\ & \leq & e^{V(x)} \, \E^{\W_x}\left[\BBone_{T_{\partial
D}>t/2} \, e^{- \frac 12 \, \int_0^{t/2} G(\omega_s) ds} \,
\E^{\W_{\omega_{t/2}}}\left[\BBone_{T_{\partial D}>t/2} \, f(\omega'_{t/2}) \, e^{- \frac 12 \,
\int_0^{t/2} G(\omega'_s) ds}\right]\right]\\ & \leq & e^{\frac{Ct}{4}} \,  (\pi t)^{- \frac 14}
\,
\parallel g \parallel_{\L^2(d\mu)} \, e^{V(x)} \, \E^{\W_x}\left[\BBone_{T_{\partial D}>t/2} \, e^{- \frac
12 \, \int_0^{t/2} G(\omega_s) ds}\right] \, .
\end{eqnarray*}
Hence if \eqref{eqkkr} is satisfied, $$P_t g(x) \leq c(t/2) e^{\frac{Ct}{4}} \,  (\pi t)^{- \frac
14} \,
\parallel g \parallel_{\L^2(d\mu)}$$ for all $x \in D$, i.e. $P_t g$ is bounded and $P_t$ is
ultracontractive. \eqref{eqkkr} is thus a sufficient condition. It
is also necessary once $e^{V} \in \L^1(\mu)$, since $$P_t
(e^{V})(x) \, = \, e^{V(x)} \, \E^{\W_x}\left[\BBone_{T_{\partial
D}>t} \, e^{- \frac 12 \, \int_0^t G(\omega_s) ds}\right] \, ,$$
as it can be observed in the Girsanov formula stated in Theorem
\ref{thmstructure}.
\end{proof}

Papers \cite{KKR} and \cite{cat5} contain several methods to prove
\eqref{eqkkr}. The most adapted one to our situation is the ``
well method'' based on the Girsanov transform (Theorem
\ref{thmstructure}).

To this end we shall introduce some notation :
\begin{itemize}
\item for $R>0$, $A_R = D \cap \{|x|\leq R\}$, \item $\bar V(R) = \sup_{x \in A_R} V(x)$ (be
careful that there is no absolute value), \item for $\varepsilon > 0$, $D_\varepsilon = \{y \in D
\, , \, d(y,\partial D)>\varepsilon\}$ and $S_\varepsilon = \inf \{t>0 \, , \, \omega(t) \in
D_\varepsilon^c \}$, \item for $k\in \N^*$ , $e_k = \inf \{t>0 \, , \, \omega(t) \in A_k \}$.
\end{itemize}
We then have the following analogue of Theorem 3.3 in \cite{KKR}

\begin{proposition}\label{propkkr}
Assume that \eqref{eqh1}, (H2) and (H3) are satisfied. We shall
also assume that for all $\varepsilon > 0$, $V$ is bounded from
below on $D_\varepsilon$. Let $a_k= \bar G(k)$, $b_k=\bar V(k)$
and $\gamma_k$ be such that $\sum_{k=1}^\infty \, \gamma_k <
+\infty$. $P_t$ is ultracontractive as soon as the following
holds: \be \label{ultra} \textrm{ for all } \beta > 0 \, , \,
\sum_{k=1}^\infty \exp \left( \frac 12 \, \left(b_{k+1} - \beta \,
\gamma_k \, a_k \right)\right) \, < \, + \infty \, .\ee
\end{proposition}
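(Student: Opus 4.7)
The strategy is to apply Lemma \ref{lemkkr}, which reduces the ultracontractivity of $P_t$ to proving that
$$F(t,x) := e^{V(x)} \, \E^{\W_x}\left[\BBone_{T_{\partial D}>t} \, e^{-\frac{1}{2}\int_0^t G(\omega_s)\,ds}\right] \; = \; P_t(e^V)(x)$$
is bounded in $x \in D$ for every fixed $t>0$. For $x$ in any compact subset of $D$ bounded away from $\partial D$, this is automatic since $V$ is bounded above (by $b_k$) and below (by assumption) on such sets and $G \geq -C$ by (H2). The real issue is to control $F(t,x)$ as $|x| \to \infty$ in $D$; this is precisely where hypothesis (H3), namely $a_k = \bar G(k) \to +\infty$, enters.

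I would implement the \emph{well method} of \cite{KKR}, localizing on the annuli $A_{k+1}\setminus A_k$. For $x$ in such an annulus, the prefactor satisfies $e^{V(x)} \leq e^{b_{k+1}}$, and condition \eqref{ultra} already suggests the correct pairing: this prefactor should be tamed by the exponential decay produced by time spent outside the ``well'' $A_k$, where $G \geq a_k$. Using the first entrance time $e_k$ of $\omega$ into $A_k$, I would split
$$\E^{\W_x}\left[\BBone_{T_{\partial D}>t}\, e^{-\frac{1}{2}\int_0^t G(\omega_s)\,ds}\right]$$
according to the dichotomy $\{e_k > \gamma_k\}$ versus $\{e_k \leq \gamma_k\}$. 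On the first event, $|\omega_s|>k$ for all $s\leq \gamma_k$, hence $\int_0^{\gamma_k} G(\omega_s)\,ds \geq \gamma_k a_k$, producing the contribution $\exp\!\left(\frac{1}{2}(b_{k+1}-\gamma_k a_k)\right)$, which is summable in $k$ by \eqref{ultra}. On the second event, the strong Markov property at $e_k$ restarts the process from a point of $\partial A_k$, and the same dichotomy applies one level deeper; the assumption $\sum_k \gamma_k < \infty$ ensures that the cumulative time budget cannot exceed $t$, forcing the iteration to terminate after finitely many steps, after which one lands in a fixed compact region where $F$ is trivially bounded.

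The main obstacle is the bookkeeping in this iterated decomposition. Each strong Markov step produces additional multiplicative factors, coming from Brownian hitting probabilities between successive annuli and from reintroducing the prefactor $e^{V(\omega_{e_k})} \leq e^{b_k}$, so the naive iteration generates extra geometric multipliers that must not destroy the summability. This is precisely why \eqref{ultra} is required to hold \emph{for all} $\beta>0$: the free parameter provides the slack needed to absorb these combinatorial factors, while the residual sum $\sum_k \exp\!\left(\frac{1}{2}(b_{k+1}-\beta \gamma_k a_k)\right)$ remains finite for any chosen $\beta$. Assembling the telescoped contributions yields the uniform bound $F(t,x)\leq c(t)$ required by Lemma \ref{lemkkr}, and ultracontractivity of $P_t$ follows.
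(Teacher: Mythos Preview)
Your strategy is the same as the paper's: invoke Lemma~\ref{lemkkr} and then run the ``well method'' of \cite{KKR}, descending through the annuli $A_{k+1}\setminus A_k$ by splitting on whether the entrance time $e_k$ occurs before or after $\gamma_k$, so that on $\{e_k>\gamma_k\}$ the integral $\int_0^{\gamma_k}G\ge \gamma_k a_k$ balances the prefactor $e^{V(x)}\le e^{b_{k+1}}$. The paper's proof is essentially a pointer to \cite{KKR}, so your outline is at the same level of detail.

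The one substantive point the paper singles out, and which your write-up glosses over, is precisely what makes the restart step clean. You write that at time $e_k$ one can ``reintroduce the prefactor $e^{V(\omega_{e_k})}\le e^{b_k}$''; but to justify this you must control the factor $e^{V(x)}\,e^{-\frac12\int_0^{e_k}G(\omega_s)\,ds}\,e^{-V(\omega_{e_k})}$ left over after the strong Markov split, and a priori $V(\omega_{e_k})$ need not be bounded below (the point $\omega_{e_k}$ can lie arbitrarily close to $\partial D$). The paper handles this via the martingale identity (their analogue of Lemma~3.1 in \cite{KKR}): since $M_t=e^{-V(\omega_t)-\frac12\int_0^t G(\omega_s)\,ds}$ is a local martingale, optional stopping at $e_k\wedge S_\varepsilon$ (here the lower bound on $V$ over $D_\varepsilon$ is used to get boundedness) and then letting $\varepsilon\to0$ yields
\[
e^{V(x)}\,\E^{\W_x}\!\left[\BBone_{T_{\partial D}>e_k}\,e^{-\frac12\int_0^{e_k}G(\omega_s)\,ds}\right]\;\le\;e^{\bar V(k)}=e^{b_k}\,.
\]
This is exactly the resetting you need, and it is the step that requires the extra hypothesis that $V$ is bounded below on each $D_\varepsilon$. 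Once this lemma is in hand, the iteration proceeds as you describe, with the $S_\varepsilon$ localization carried along and removed at the end.
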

\begin{proof}
The first step is to be convinced that Lemma 3.1 in \cite{KKR} is
still true i.e if $\tau_R = \inf \{t>0 \, , \, \omega_t \in A_R\}$
and $x \notin A_R$ , $$e^{V(x)} \,
\E^{\W_x}\left[\BBone_{T_{\partial D}>\tau_R} \, e^{- \frac 12 \,
\int_0^{\tau_R} G(\omega_s) ds}\right] \leq e^{\bar V(R)} \, .$$
Define $M_t = e^{-V(\omega_t) - \frac 12 \, \int_0^t G(\omega_s)
ds}$. Thanks to our hypothesis on $V$ and (H2), $M_{t\wedge \tau_R
\wedge S_\varepsilon}$ is actually a bounded martingale. The
result follows by making successively $t$ go to infinity and
$\varepsilon$ go to 0.

Once this is proved the rest of the proof is exactly the same as in \cite{KKR} except that we have
to replace the stopping times $\tau_j$ therein by $e_j \wedge S_\varepsilon$ and then make
$\varepsilon$ go to $0$ again.
\end{proof}

\bigskip

\bibliographystyle{plain}

\end{document}